\RequirePackage{fix-cm}
\documentclass[11pt, a4paper, oneside, DIV11, final, pagebackref]{amsart}
\usepackage[notref,notcite]{showkeys}
\usepackage[utf8]{inputenc}
\usepackage[T1]{fontenc}
\usepackage{amssymb}
\usepackage[stretch=10]{microtype}
\usepackage{mathtools}
\usepackage[DIV11, headinclude=true]{typearea}
\usepackage[hidelinks, linktoc=all]{hyperref}

\providecommand{\href}[2]{#2}
\providecommand{\texorpdfstring}[2]{#1}
\providecommand*{\backref}{}
\providecommand*{\backrefalt}{}
\renewcommand*{\backref}[1]{}
\renewcommand*{\backrefalt}[4]{%
	\ifcase #1 %
	\or
	  Cited page~#2.
	\else
	  Cited pages~#2.
	\fi
}

\newcommand{\boL}{\mathcal{L}}
\newcommand{\boM}{\mathcal{M}}
\newcommand{\boN}{\mathcal{N}}
\newcommand{\boV}{\mathcal{V}}
\newcommand{\Pbb}{\mathbb{P}}
\newcommand{\E}{\mathbb{E}}
\newcommand{\Z}{\mathbb{Z}}
\newcommand{\N}{\mathbb{N}}
\newcommand{\R}{\mathbb{R}}

\newcommand{\dd}{\mathop{}\!\mathrm{d}}

\DeclarePairedDelimiter{\abs}{\lvert}{\rvert}
\DeclarePairedDelimiter{\pare}{(}{)}
\DeclarePairedDelimiter{\norm}{\lVert}{\rVert}
\newcommand{\st}{\::\:}

\def\be{\begin{equation}}
\def\ee{\end{equation}}

\def\E0{E^{(0)}}

\DeclareMathOperator{\Card}{Card}
\DeclareMathOperator{\Id}{Id}

\renewcommand{\epsilon}{\varepsilon}

\renewcommand{\phi}{\varphi}
\renewcommand{\leq}{\leqslant}
\renewcommand{\geq}{\geqslant}
\newcommand{\df}{\mathbf{d}}
\newcommand{\loc}{\mathrm{loc}}

\newtheorem{thm}{Theorem}[section]
\newtheorem{prop}[thm]{Proposition}
\newtheorem{definition}[thm]{Definition}
\newtheorem{lem}[thm]{Lemma}
\newtheorem{cor}[thm]{Corollary}
\newtheorem*{prop*}{Proposition}

\theoremstyle{definition}

\newtheorem{rmk}[thm]{Remark}

\numberwithin{equation}{section}

\title{Quantitative Pesin theory for Anosov diffeomorphisms and flows}
\author{Sébastien Gouëzel and Luchezar Stoyanov}

\address{Laboratoire Jean Leray, CNRS UMR 6629,
Université de Nantes, 2 rue de la
Houssinière,
44322 Nantes, France}
\email{sebastien.gouezel@univ-nantes.fr}

\address{School of Mathematics and Statistics, University of Western Australia, Crawley 6009 WA, Australia}
\email{luchezar.stoyanov@uwa.edu.au}

\date{\today}

\begin{document}

\begin{abstract}
Pesin sets are measurable sets along which the behavior of a matrix cocycle
above a measure preserving dynamical system is explicitly controlled. In
uniformly hyperbolic dynamics, we study how often points return to Pesin
sets under suitable conditions on the cocycle: if it is locally constant,
or if it admits invariant holonomies and is pinching and twisting, we show
that the measure of points that do not return a linear number of times to
Pesin sets is exponentially small. We discuss applications to the
exponential mixing of contact Anosov flows, and counterexamples
illustrating the necessity of suitable conditions on the cocycle.
\end{abstract}

\maketitle

\section{Introduction and main results}

Uniformly hyperbolic dynamical systems are very well understood. An approach
to study more general systems is to see to what extent they resemble
uniformly hyperbolic ones. A very fruitful approach in this respect is the
development of Pesin theory, that requires hyperbolic features (no zero
Lyapunov exponent) almost everywhere with respect to an invariant measure,
and constructs from these local stable and unstable manifolds, then leading
to results such as the ergodicity of the system under study.

A basic tool in Pesin theory is the notion of Pesin sets, made of points for
which, along their orbits, the Oseledets decomposition is well controlled in
a quantitative way. Their existence follows from general measure theory
argument, but they are not really explicit. Even in uniformly hyperbolic
situations, Pesin sets are relevant objects as the control of the Oseledets
decomposition gives directions in which the dynamics is close to conformal.
In particular, the second author has shown in~\cite{stoyanov_dolgopyat} that
Pesin sets could be used, in contact Anosov flows, to study the decay of
correlations: he proved that, if points return exponentially fast to Pesin
sets, then the correlations decay exponentially fast.

Our goal in this article is to investigate this question, for Anosov
diffeomorphisms and flows. We do not have a complete answer, but our results
indicate a dichotomy: if the dynamics is not too far away from conformality
(for instance in the case of the geodesic flow on a $1/4$-pinched compact
manifold of negative curvature), points return exponentially fast to Pesin
sets for generic metrics (in a very strong sense), and possibly for all
metrics. On the other hand, far away from conformality, this should not be
the case (we have a counter-example in a related setting, but with weaker
regularity).

Such statements are related to large deviations estimates for matrix
cocycles, i.e., products of matrices governed by the dynamics (for Pesin
theory, the cocycle is simply the differential of the map). Indeed, we will
show that such large deviations estimates make it possible to control the
returns to Pesin sets, by quantifying carefully some arguments in the proof
of Oseledets theorem.

\bigskip

Let $T: X \to X$ be a measurable map on a space $X$, preserving an ergodic
probability measure $\mu$. Consider a measurable bundle $E$ over $X$, where
each fiber is isomorphic to $\R^d$ and endowed with a norm. A linear cocycle
is a measurable map $M$ on $E$, mapping the fiber above $x$ to the fiber
above $Tx$ in a linear way, through a matrix $M(x)$. We say that the cocycle
is log-integrable if $\int \log \max(\norm{M(x)}, \norm{M(x)^{-1}}) \dd\mu(x)
< \infty$. In this case, it follows from Kingman's theorem that one can
define the Lyapunov exponents of the cocycle, denoted by $\lambda_1 \geq
\lambda_2 \geq \dotsb \geq \lambda_d$. They are the growth rate of vectors
under iteration of the cocycle, above $\mu$-almost every point. The sum
$\lambda_1+\dotsb+\lambda_i$ is also the asymptotic exponential growth rate
of the norm of the $i$-th exterior power $\Lambda^i M^n(x)$, for $\mu$-almost
every $x$.

The main condition to get exponential returns to Pesin sets is an exponential
large deviations condition.

\begin{definition}
\label{def: exp_dev_general} Consider a transformation $T$ preserving a
probability measure $\mu$, and a family of functions $u_n : X \to \R$. Assume
that, almost everywhere, $u_n(x)/n$ converges to a limit $\lambda$. We say
that the family has exponential large deviations if, for any $\epsilon>0$,
there exists $C>0$ such that, for all $n\geq 0$,
\begin{equation*}
  \mu\{  x \st \abs{u_n(x) - n \lambda} \geq n \epsilon\} \leq C e^{-C^{-1}n}.
\end{equation*}
\end{definition}

This general definition specializes to several situations that will be
relevant in this paper:
\begin{definition}
\label{def:exp_large_dev_u} Consider an integrable function $u$ above an
ergodic transformation $(T,\mu)$. We say that $u$ has exponential large
deviations if its Birkhoff sums $S_n u$ have exponential large deviations in
the sense of Definition~\ref{def: exp_dev_general}, i.e., for any
$\epsilon>0$, there exists $C>0$ such that, for all $n\geq 0$,
\begin{equation*}
  \mu\{  x \st \abs{S_n u(x) - n \int u} \geq n \epsilon\} \leq C e^{-C^{-1}n}.
\end{equation*}
\end{definition}

\begin{definition}
\label{def:exp_large_dev} Consider a log-integrable linear cocycle $M$ above
a transformation $(T,\mu)$, with Lyapunov exponents $\lambda_1 \geq \dotsb
\geq \lambda_d$. We say that $M$ has exponential large deviations for its top
exponent if the family of functions $u_n(x) = \log \norm{M^n(x)}$ (which
satisfies $u_n(x)/n \to \lambda_1$ almost everywhere) has exponential large
deviations in the sense of Definition~\ref{def: exp_dev_general}, i.e., for
any $\epsilon>0$, there exists $C>0$ such that, for all $n\geq 0$,
\begin{equation*}
  \mu\{  x \st \abs{\log \norm{M^n(x)} - n \lambda_1} \geq n \epsilon\} \leq C e^{-C^{-1}n}.
\end{equation*}
We say that $M$ has exponential large deviations for all exponents if, for
any $i\leq d$, the functions $\log \norm{\Lambda^i M^n(x)}$ satisfy
exponential large deviations in the sense of Definition~\ref{def:
exp_dev_general}, i.e., for any $\epsilon>0$, there exists $C>0$ such that,
for all $n\geq 0$,
\begin{equation}
\label{eq:exp_dev_all_exp}
  \mu\{  x \st \abs{\log \norm{\Lambda^i M^n(x)}
     - n (\lambda_1+\dotsb+\lambda_i)} \geq n \epsilon\} \leq C e^{-C^{-1}n}.
\end{equation}
\end{definition}

We will explain in the next paragraph that many linear cocycles above
subshifts of finite type have exponential large deviations for all exponents,
see Theorem~\ref{thm:large_deviations} below. This builds on techniques
developed by Bonatti, Viana and Avila (see~\cite{bonatti_viana_lyapunov,
avila_viana_criterion}). The main novelty of our work is the proof that such
large deviations imply exponential returns to Pesin sets, as we explain in
Paragraph~\ref{subsec:quantitative_Pesin}. The last paragraph of this
introduction discusses consequences of these results.

\subsection{Sufficient conditions for large deviations for linear cocycles}

In this paragraph, we consider a (bilateral) transitive subshift of finite
type $T:\Sigma \to \Sigma$, together with a Gibbs measure $\mu$ for a Hölder
potential. Let $E$ be a continuous $\R^d$-bundle over $\Sigma$, endowed with
a continuous linear cocycle $M$ on $E$ over $T$. For instance, one may take
$E=\Sigma \times \R^d$, then $M(x)$ is simply an invertible $d\times d$
matrix depending continuously on $x$. We describe in
Theorem~\ref{thm:large_deviations} various conditions under which such a
cocycle has exponential large deviations for all exponents, in the sense of
Definition~\ref{def:exp_large_dev}. Through the usual coding process, similar
results follow for hyperbolic basic sets of diffeomorphisms, and in
particular for Anosov or Axiom A diffeomorphisms.

We show in Appendix~\ref{app:counter} the existence of a continuous linear
cocycle above a subshift of finite type which does not have exponential large
deviations for its top exponent. Hence, additional assumptions are needed for
this class of results (contrary to the case of Birkhoff sums, where all
Birkhoff sums of continuous functions over a transitive subshift of finite
type have exponential large deviations). These assumptions, as is usual in
the study of linear cocycles, are defined in terms of holonomies. In a
geometric context, holonomies are usually generated by connections. In the
totally disconnected context of subshifts of finite type, connections do not
make sense, but the global notion of holonomy does.

The local stable set of $x$ is the set $W_{\loc}^s(x) = \{y \st y_n=x_n
\text{ for all } n\geq 0\}$. In the same way, its local unstable set is
$W_{\loc}^u(x) = \{y \st y_n=x_n \text{ for all } n\leq 0\}$. By definition,
$W_{\loc}^s(x) \cap W_{\loc}^u(x) = \{x\}$.

An unstable holonomy is a family of isomorphisms $H^u_{ x \to  y}$ from
$E(x)$ to $E(y)$, defined for all $x$ and $y$ with $y \in W_{\loc}^u(x)$. We
require the compatibility conditions $H^u_{x\to x} = \Id$ and $H^u_{ y \to z}
\circ H^u_{ x \to y} = H^u_{ x \to  z}$ for any $x$, $y$ and $z$ on the same
local unstable set. Moreover, we require the continuity of $(x,  y) \mapsto
H^u_{ x \to  y}$ (globally, i.e., not only along each leaf).

In the same way, one defines a stable holonomy as a family of maps $H^s_{ x
\to y}$ from $E(x)$ to $E(y)$ when $x$ and $y$ belong to the same local
stable set, with the same equivariance and continuity requirements as above.

\begin{definition}
A linear cocycle admits invariant continuous holonomies if there exist two
stable and unstable continuous holonomies, denoted respectively by $H^s$ and
$H^u$, that are equivariant with respect to the cocycle action. More
precisely, for any $x$, for any $y \in W_{\loc}^s(x)$, and any $v \in  E(x)$,
one should have
\begin{equation*}
   M(y) H^s_{ x \to  y} v = H^s_{ T  x \to  T  y}  M(x) v.
\end{equation*}
Similarly, for any $x$, for any $y \in W_{\loc}^u(x)$, and any $v \in E(x)$,
one should have
\begin{equation*}
   M(y)^{-1} H^u_{ x \to  y} v
    = H^u_{ T^{-1}  x \to  T^{-1}  y}  M(x)^{-1} v.
\end{equation*}
\end{definition}

Stable holonomies give a canonical way to trivialize the bundle over local
stable sets. Thus, to trivialize the whole bundle, one may choose an
arbitrary trivialization over an arbitrary local unstable set, and then
extend it to the whole space using the holonomies along the local stable
sets. In this trivialization, the cocycle is constant along local stable
sets, i.e., it only depends on future coordinates. Symmetrically, one can
trivialize the bundle first along a stable set, and then using unstable
holonomies along the local unstable sets. In this trivialization, the cocycle
is constant along unstable sets, and depends only on past coordinates. Note
that these two trivializations do not coincide in general, unless the stable
and unstable holonomies commute: In this case, the cocycle only depends on
the coordinate $x_0$ in the resulting trivialization, i.e., it is locally
constant. Conversely, a locally constant cocycle admits the identity as
stable and unstable invariant commuting holonomies.

We say that a linear cocycle is \emph{pinching and twisting in the sense of
Avila-Viana}~\cite{avila_viana_criterion} if it has invariant continuous
holonomies, and if there exist a periodic point $p$ (of some period $k$) and
a point $q$ which is asymptotic to $p$ both in the past and in the future
(i.e., $q\in W_{\loc}^u(p)$ and $T^i q \in W_{\loc}^s(p)$ for some $i$ which
is a multiple of $k$), such that
\begin{itemize}
\item All the eigenvalues of $M^k(p)$ are real and different.
\item Define a map $\Psi:H^s_{T^i q \to p} \circ M^i(q) \circ H^u_{p \to
    q}$ from $E(p)$ to itself. Then, for any subspaces $U$ and $V$ of
    $E(p)$ which are invariant under $M^k(p)$ (i.e., which are union of
    eigenspaces) with $\dim U + \dim V = \dim E$, then $\Psi(U) \cap V =
    \{0\}$. In other words, the map $\Psi$ puts the eigenspaces of $M^k(p)$
    in general position.
\end{itemize}
This condition ensures that the Lyapunov spectrum of any Gibbs measure is
simple, by the main result of~\cite{avila_viana_criterion}. In the space of
fiber-bunched cocycles (which automatically admit invariant continuous
holonomies), this condition is open (this is clear) and dense (this is harder
as there might be pairs of complex conjugate eigenvalues at some periodic
points, which need more work to be destroyed, see~\cite[Proposition
9.1]{bonatti_viana_lyapunov}).

\begin{thm}
\label{thm:large_deviations} Let $T$ be a transitive subshift of finite type
on a space $\Sigma$, and $\mu$ a Gibbs measure for a Hölder-continuous
potential. Consider a continuous linear cocycle $M$ on a vector bundle $E$
above $T$. Then $M$ has exponential large deviations for all exponents in the
following situations:
\begin{enumerate}
\item If all its Lyapunov exponents coincide.
\item If there us a continuous decomposition of $E$ as a direct sum of
    subbundles $E=E_1 \oplus \dotsb \oplus E_k$ which is invariant under
    $M$, such that the restriction of $M$ to each $E_i$ has exponential
    large deviations for all exponents.
\item More generally, if there is an invariant continuous flag
    decomposition $\{0\}=F_0 \subseteq F_1 \subseteq \dotsb \subseteq F_k =
    E$, such that the cocycle induced by $M$ on each $F_i/F_{i-1}$ has
    exponential large deviations for all exponents.
\item If the cocycle $M$ is locally constant in some trivialization of the
    bundle $E$ (this is equivalent to the existence of invariant continuous
    holonomies which are commuting).
\item If the cocycle $M$ admits invariant continuous holonomies, and if it
    is pinching and twisting in the sense of Avila-Viana.
\item If the cocycle $M$ admits invariant continuous holonomies, and the
    bundle is $2$-dimensional.
\end{enumerate}
\end{thm}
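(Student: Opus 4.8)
The plan is to treat the six items in order of increasing difficulty: items~(1)--(3) are soft (a base case and two reductions), while items~(4)--(6) carry the real content and build on the holonomy machinery of Bonatti--Viana~\cite{bonatti_viana_lyapunov} and Avila--Viana~\cite{avila_viana_criterion}. The organizing observation is that ``$M$ has exponential large deviations for all exponents'' is exactly ``$\Lambda^i M$ has exponential large deviations for its top exponent for every $i \leq d$'', so it suffices to control one top exponent at a time, \emph{provided} the structural hypothesis in force passes to the exterior powers: invariant continuous holonomies do so functorially (take $\Lambda^i H^s$ and $\Lambda^i H^u$), while the pinching and twisting condition needs more care since $\Lambda^i M^k(p)$ may fail to have distinct eigenvalues, and this is dealt with as in~\cite{avila_viana_criterion}. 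Item~(1) then follows quickly: if all Lyapunov exponents equal $\lambda$, all Lyapunov exponents of $\Lambda^i M$ equal $i\lambda$, so it suffices to treat $i=1$; the sequence $a_n(x)=\log\norm{M^n(x)}$ is subadditive with $\tfrac1n\int a_n\to\lambda=\inf_n\tfrac1n\int a_n$, and fixing $m$ with $\tfrac1m\int a_m<\lambda+\epsilon/2$ and using $a_{km}\leq\sum_{j<k}a_m\circ T^{jm}$, the upper tail $\mu\{a_n\geq n\lambda+n\epsilon\}$ reduces to exponential large deviations for Birkhoff sums of the continuous function $a_m$ over $(T^m,\mu)$, which hold by the usual thermodynamic formalism (non-multiples of $m$ being absorbed since $\norm{M(\cdot)}$ is bounded); the lower tail follows by applying the same to the subadditive sequence $\log\norm{M^n(x)^{-1}}$, whose limit is $-\lambda$, via $\log\norm{M^n(x)}\geq-\log\norm{M^n(x)^{-1}}$.

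For items~(2)--(3) it suffices to prove~(3), which contains~(2). Choose a continuous, not necessarily invariant, splitting $E\cong G_1\oplus\dots\oplus G_k$ with $G_j\cong F_j/F_{j-1}$, so that $M$ is block upper triangular with diagonal blocks the induced cocycles $M_j$ on $F_j/F_{j-1}$; then $\Lambda^i M^n$ is block upper triangular on $\Lambda^i E$, with diagonal blocks $\bigotimes_j\Lambda^{i_j}M_j^n$ over all $(i_1,\dots,i_k)$ with $\sum_j i_j=i$, and each off-diagonal entry of $\Lambda^i M^n(x)$ involves only a bounded number of ``transitions'' between blocks and is therefore bounded, up to a factor polynomial in $n$, by a product of diagonal-block norms. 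This gives
\[
  c\,\max_{(i_j)}\prod_j\norm{\Lambda^{i_j}M_j^n}
  \;\leq\;\norm{\Lambda^i M^n}
  \;\leq\;P(n)\,\max_{(i_j)}\prod_j\norm{\Lambda^{i_j}M_j^n}
\]
for some $c>0$ and polynomial $P$; since each factor concentrates exponentially around $n$ times the sum of the top $i_j$ exponents of $F_j/F_{j-1}$, since exponential large deviations are stable under finite products and maxima, and since the Lyapunov spectrum of $M$ is the union with multiplicity of those of the graded pieces --- so that $\max_{(i_j)}\sum_j(\text{sum of top }i_j\text{ exponents of }F_j/F_{j-1})=\lambda_1+\dots+\lambda_i$ --- item~(3) follows.

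Items~(4)--(6) use the induced dynamics on projective and flag bundles. Under~(5), following~\cite{bonatti_viana_lyapunov,avila_viana_criterion}, pinching and twisting make the Lyapunov spectrum simple and make the dynamics on $\Pbb(\Lambda^i E)$, trivialized via the stable and unstable holonomies, admit a unique invariant $u$-state with Hölder-regular conditionals, toward which the fibered dynamics contracts exponentially on average. Granting this, for a fixed generic $v$ with $\norm{v}=1$ one has $\log\norm{\Lambda^i M^n(x)v}=\sum_{j<n}\phi_i\bigl(T^j x,[\Lambda^i M^j(x)v]\bigr)$, a Birkhoff sum over $(T,\mu)$ for the skew-product on $\Sigma\times\Pbb(\Lambda^i E)$ with continuous fibre function $\phi_i(x,[w])=\log(\norm{\Lambda^i M(x)w}/\norm{w})$; the exponential contraction makes this skew-product mix exponentially fast, so its Birkhoff sums of continuous functions have exponential large deviations, and simplicity of the top exponent of $\Lambda^i M$ together with the Hölder regularity of the $u$-state --- used to bound below, off an exponentially small set, the angle between $v$ and the complementary subspace --- upgrades this to exponential large deviations for $\norm{\Lambda^i M^n}$ about $n(\lambda_1+\dots+\lambda_i)$. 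Item~(6) follows the same scheme: for $d=2$ only $\Lambda^1 M=M$ and $\Lambda^2 M=\det M$ occur, $\log\abs{\det M^n(x)}=S_n(\log\abs{\det M})(x)$ has exponential large deviations, and for $M$ itself one of three cases holds --- $\lambda_1=\lambda_2$, whence~(1); a continuous invariant line field, whence~(3) with one-dimensional graded pieces on which $\log$ of the cocycle is a Birkhoff sum of a continuous function; or pinching and twisting, treated as above. For~(4), a locally constant cocycle admits the identity as commuting invariant holonomies, and one reduces, via a composition series of constant subbundles and~(3), to the irreducible case, where the structure theory of products of random matrices (Furstenberg) reduces matters, through the Oseledets filtration, to the all-exponents-equal case~(1) and the proximal case treated as in~(5).

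The main obstacle is the quantitative core of items~(4)--(6): converting the \emph{qualitative} conclusions of the Avila--Viana/Furstenberg theory (almost everywhere convergence, uniqueness of the stationary measure) into \emph{exponential} concentration. This requires genuinely quantitative inputs --- an exponential contraction rate on the flag bundle and Hölder regularity of the invariant $u$-state --- and a careful estimate of the discrepancy between $\norm{\Lambda^i M^n}$ and $\norm{\Lambda^i M^n v}$ for generic $v$; a secondary, more technical point is the passage of pinching and twisting to the exterior powers in~(5).
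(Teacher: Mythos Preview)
Your treatment of (1)--(3) is essentially the paper's. For (1) the paper likewise reduces to the subadditive upper bound applied to $\log\norm{M^n}$ and to $\log\norm{(M^n)^{-1}}$. For (3) note that only the \emph{lower} bound needs work---the upper bound holds for \emph{any} continuous cocycle by subadditivity (Lemma~\ref{lem:upper_anx})---so your two-sided sandwich is more than required; the single inequality $\norm{\Lambda^i M^n}\geq\prod_j\norm{\Lambda^{i_j}M_j^n}$ for the one partition $(i_j)$ realizing $\lambda_1+\dots+\lambda_i$ already suffices, and this is what the paper uses.

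For (4)--(6), however, there are real gaps. The quantitative mechanism you invoke---exponential contraction on the projective bundle, H\"older regularity of the $u$-state conditionals, exponential mixing of the skew product---is neither available nor needed. The paper's argument (Theorem~\ref{thm:strongly_irreducible}, following Guivarc'h--Le~Page) is much softer: \emph{uniqueness} of the invariant $u$-state alone, plus continuity of $x_+\mapsto\nu^+_{x_+}$, gives via Furstenberg's formula and compactness that $\int S_N g_0\,d\nu_{(x,[v])}\leq -c_0N$ \emph{uniformly} in $(x,[v])$ for some $N$, where $g_0(x,[v])=\log(\lambda_1-\epsilon)-\log(\norm{M(x)v}/\norm{v})$; then the Markov property of the conditional measures yields $\int e^{\alpha S_n g_0}\,d\nu_{(x,[v])}\leq Ce^{-\beta n}$ by supermultiplicativity, and since $\nu_{(x,[v])}$ projects to $\mu$ this gives the deviation bound directly---no separate angle estimate between $v$ and the complementary subspace is needed.

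In (5) you flag the passage to $\Lambda^i M$ as problematic but then assert a unique $u$-state on $\Pbb(\Lambda^i E)$; this is not what Avila--Viana prove, and indeed $\Lambda^i M$ need not be irreducible even when $M$ is pinching and twisting. What they do prove is uniqueness of the $u$-state on the \emph{Grassmannian} $\mathrm{Gr}_i(E)$, and the paper runs the Guivarc'h--Le~Page argument there rather than on projective space.

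In (6) your trichotomy is not exhaustive: a $2$-dimensional cocycle with holonomies and $\lambda_1>\lambda_2$ can fail to be pinching and twisting while admitting no continuous invariant line bundle. The paper's split is: unique $u$-state (done), or $\lambda_1=\lambda_2$ (done by (1)), or $\lambda_1>\lambda_2$ with non-unique $u$-state. In the last case the proof of Lemma~\ref{lem:hyperplane_0} still produces a continuous finite family $\boV(x)$ of lines invariant under the cocycle and the \emph{stable} holonomy (though possibly not the unstable one, cf.\ Remark~\ref{rmk:not_locally_constant}); this is too weak for (3) in general, but in dimension~$2$ the sub- and quotient cocycles are $1$-dimensional and hence handled by (1), so the reduction of Lemma~\ref{lem:strongly_irreducible2} goes through.
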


The first three points are easy, the interesting ones are the last ones. The
various statements can be combined to obtain other results. For instance, if
each (a priori only measurable) Oseledets subspace is in fact continuous (for
instance if the Oseledets decomposition is dominated), then the cocycle has
exponential large deviations for all exponents: this follows from points (1)
and (2) in the theorem. We expected that our techniques would show a result
containing (4--6): if a cocycle admits invariant continuous holonomies, then
it should have exponential large deviations for all exponents. However, there
is a difficulty here, see Remark~\ref{rmk:not_locally_constant}. Points (1-3)
are proved on Page~\pageref{proof:thm_1-3}, (4) on Page~\pageref{proof:4},
(5) on Page~\pageref{proof:5} and (6) on Page~\pageref{proof:6}. The proofs
of (4), (5) and (6) follow the same strategy, we will insist mainly on (4)
and indicate more quickly the modifications for (5) and (6). These proofs are
essentially applications of the techniques in~\cite{bonatti_viana_lyapunov,
avila_viana_criterion}.

\begin{rmk}
In Theorem~\ref{thm:large_deviations}, exponential large deviations are
expressed in terms of matrix norms: one should choose on each $E(x)$ a norm,
depending continuously on $x$, and then $\norm{M^n(x)}$ is the operator norm
of $M^n(x)$ between the two normed vector spaces $E(x)$ and $E(T^n x)$. The
above statement does not depend on the choice of the norm (just as the value
of the Lyapunov exponents) as the ratio between two such norms is bounded
from above and from below by compactness. Hence, we may choose whatever norm
we like most on $E$. For definiteness, we use a Euclidean norm.
\end{rmk}

The above theorem shows that, in most usual topologies, generic linear
cocycles have exponential large deviations for all exponents. Indeed, for
generic cocycles in the $C^0$ topology, the Oseledets decomposition is
dominated (see~\cite[Theorem 9.18]{viana_lyapunov}), hence (1) and (2) in the
theorem yield exponential large deviations. For generic cocycles in the
Hölder topology among fiber bunched cocycles (the most tractable Hölder
cocycles), pinching and twisting are generic, hence (5) also gives
exponential large deviations.

\subsection{Quantitative Pesin theory from large deviations for linear cocycles}

\label{subsec:quantitative_Pesin}

Let $T$ be an invertible continuous map on a compact metric space $X$,
preserving an ergodic probability measure $\mu$. Let $M$ be a continuous
cocycle above $T$, on the trivial bundle $X \times \R^d$. Denote by
$\lambda_1 \geq \dotsb \geq \lambda_d$ its Lyapunov exponents, and $I=\{i \st
\lambda_i <\lambda_{i-1}\}$. Then $(\lambda_i)_{i \in I}$ are the distinct
Lyapunov exponents. Denote by $E_i$ the corresponding Oseledets subspace, its
dimension $d_i$ is $\Card\{j\in [1,d] \st \lambda_j = \lambda_i\}$. The
subspaces $E_i(x)$ are well-defined on an invariant subset $X'$ of $X$ with
$\mu(X') = 1$ and  $E_i(T(x)) = E_i(x)$ for all $x\in X'$. Moreover
$\frac{1}{n} \log \norm{M^n(x) v} \to \lambda_i$ as $n \to \pm \infty$ for
all $v\in E_i(x)\setminus \{ 0\}$. With this notation, the space $E_i(x)$ is
repeated $d_i$ times. The distinct Oseledets subspaces are $(E_i(x))_{i \in
I}$.

Let $\epsilon>0$. The basic ingredient in Pesin theory is the function
\begin{equation}
\label{eq:def_A_epsilon}
\begin{split}
  A_\epsilon(x)&  = \sup_{i\in I} A_\epsilon^{(i)}(x)
  \\&
  = \sup_{i\in I} \sup_{v \in E_i(x)\setminus\{0\}} \sup_{m,n\in \Z}
    \frac{ \norm{M^n(x) v}}{\norm{M^m(x) v}} e^{-(n-m)\lambda_i} e^{-(\abs{n}
+ \abs{m}) \epsilon/2}
  \in [0,\infty].
 \end{split}
\end{equation}
This function is slowly varying, i.e.,
\begin{equation*}
  e^{-\epsilon} A_\epsilon(x) \leq A_\epsilon(Tx) \leq e^\epsilon A_\epsilon(x),
\end{equation*}
as the formulas for $x$ and $Tx$ are the same except for a shift of $1$ in
$n$ and $m$. Moreover, for all $k\in \Z$ and all $v\in E_i(x)$,
\begin{equation*}
  \norm{v}{A_\epsilon(x) e^{-\abs{k}\epsilon}} \leq
      \frac{\norm{M^k(x) v}}{e^{k \lambda_i }} \leq  \norm{v}{A_\epsilon(x)
e^{\abs{k}\epsilon}},
\end{equation*}
where one inequality follows by taking $m=0$ and $n=k$ in the definition of
$A_\epsilon$, and the other inequality by taking $m=k$ and $n=0$. The almost
sure finiteness of $A_\epsilon$ follows from Oseledets theorem.

\emph{Pesin sets} are sets of the form $\{x \st A_\epsilon(x) \leq C\}$, for
some constant $C>0$. Our main goal is to show that most points return
exponentially often to some Pesin set. This is the content of the following
theorem.

\begin{thm}
\label{thm:exp_returns_Pesin} Let $T$ be a transitive subshift of finite type
on a space $\Sigma$, and $\mu$ a Gibbs measure for a Hölder-continuous
potential. Consider a continuous linear cocycle $M$ on the trivial vector
bundle $\Sigma \times \R^d$ above $T$. Assume that $M$ has exponential large
deviations for all exponents, both in positive and negative times.

Let $\epsilon>0$ and $\delta>0$. Then there exists $C>0$ such that, for all
$n\in \N$,
\begin{equation*}
  \mu \{x \st \Card\{j \in [0, n-1] \st A_\epsilon(T^j x) > C\} \geq \delta n\}
\leq C e^{-C^{-1}n}.
\end{equation*}
\end{thm}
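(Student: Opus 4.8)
The plan is to show that the function $A_\epsilon$ can be bounded, up to a multiplicative constant, in terms of a finite number of quantities of the form $\log\norm{\Lambda^i M^n(x)}$ evaluated at times that are controlled by the orbit, and then to feed the exponential large deviations hypothesis into a combinatorial counting argument. Concretely, I would first fix $\epsilon$ and reduce to controlling each $A_\epsilon^{(i)}$ separately, since $A_\epsilon = \sup_{i\in I} A_\epsilon^{(i)}$ and $\abs{I}\le d$. The key analytic input is the classical fact, at the heart of the quantitative proof of Oseledets' theorem, that the angle between the Oseledets subspaces and the growth of vectors inside a fixed Oseledets subspace at time $n$ are controlled by the \emph{defect} in subadditivity of the exterior-power norms: if $\log\norm{\Lambda^j M^n(x)}$ is within $n\epsilon'$ of $n(\lambda_1+\dots+\lambda_j)$ for all relevant $j$ and for all times in a window $[0,n]$, then $A_\epsilon^{(i)}$ restricted to that window is bounded by a constant depending only on $\epsilon,\epsilon'$ and the cocycle. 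So the first real step is to make this precise: define, for a small auxiliary $\epsilon'>0$ (depending on $\epsilon$ and $\delta$), the "bad set"
\begin{equation*}
  B_n = \bigcup_{i=1}^d \{x \st \abs{\log\norm{\Lambda^i M^n(x)} - n(\lambda_1+\dots+\lambda_i)}\ge n\epsilon'\}\,\cup\,(\text{same with }T^{-1}),
\end{equation*}
which by Definition~\ref{def:exp_large_dev} and the hypothesis satisfies $\mu(B_n)\le C e^{-C^{-1}n}$.

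Next I would run the counting argument. The idea is that if $x$ is such that $A_\epsilon(T^j x) > C$ for at least $\delta n$ values of $j\in[0,n-1]$, then the orbit segment must, at "many" scales and locations, witness a large deviation of one of the exterior-power norms. Because $A_\epsilon$ is slowly varying ($e^{-\epsilon}A_\epsilon(x)\le A_\epsilon(Tx)\le e^\epsilon A_\epsilon(x)$), the set of times where $A_\epsilon(T^j x)$ is large is "thick" rather than isolated; a point with large $A_\epsilon(T^jx)$ forces, via the contrapositive of the analytic step, that for some $j'$ near $j$ and some length $\ell$ (bounded below by a constant times $\log C$, so $\ell$ large if $C$ large) the quantity $\log\norm{\Lambda^i M^\ell(T^{j'}x)}$ deviates from $\ell(\lambda_1+\dots+\lambda_i)$ by at least $\ell\epsilon'$, i.e.\ $T^{j'}x\in B_\ell$. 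I would then cover $[0,n-1]$ by blocks and use a union bound / Chebyshev-type estimate: the measure of points whose orbit of length $n$ spends a proportion $\ge\delta$ of its time in $\bigcup_{\ell\ge \ell_0} T^{-j'}B_\ell$ is exponentially small, by summing the exponentially small bounds $\mu(B_\ell)\le Ce^{-C^{-1}\ell}$ over the (polynomially many) choices of starting point $j'\in[0,n]$ and block length $\ell\le n$, and noting that the constraint of hitting the bad set $\delta n$ times out of $n$ contributes an entropy factor $\binom{n}{\delta n}\le e^{n H(\delta)}$ which is beaten by the exponential smallness provided $\ell_0$ (hence $C$) is chosen large enough depending on $\epsilon'$ and $\delta$. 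This is the standard mechanism by which "each bad event is exponentially rare" upgrades to "having a linear number of bad events is exponentially rare", and it forces the choice of the Pesin constant $C$ at the end.

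The main obstacle, and the step deserving real care, is the analytic reduction: showing that a uniform control of the exterior-power norms $\log\norm{\Lambda^i M^\ell}$ over a whole time window forces a uniform bound on $A_\epsilon^{(i)}$ over that window, with explicit dependence of constants. This is where one genuinely "quantifies the proof of Oseledets": one needs (a) that near-equality in the submultiplicativity $\norm{\Lambda^i M^{m+n}}\le\norm{\Lambda^i M^m}\cdot\norm{\Lambda^i M^n}$ across all splittings of the window forces the best $i$-dimensional subspace at time $0$ and at time $\ell$ to be close, and that the $i$-dimensional subspace spanned by the top singular directions stabilizes, and (b) that this stabilized subspace is (exponentially close to) $E_1\oplus\dots\oplus E_i$, so that ratios $\norm{M^n(x)v}/\norm{M^m(x)v}$ for $v$ in an individual Oseledets space $E_i$ are pinned down by the differences $\log\norm{\Lambda^i M^n}-\log\norm{\Lambda^{i-1}M^n}$. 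One must be careful that the estimate is two-sided in time (hence the hypothesis is assumed in both positive and negative times) and that the angle bounds between Oseledets subspaces — which is what ultimately makes $A_\epsilon$ finite and bounded — are extracted with the right exponential rate so that the loss $e^{(\abs n+\abs m)\epsilon/2}$ built into the definition of $A_\epsilon$ absorbs the errors. A secondary, more bookkeeping-type obstacle is handling the passage from "$A_\epsilon(T^jx)$ large at a single time $j$" to "a genuine deviation of an exterior power occurs in a window anchored near $j$", since $A_\epsilon$ at a point is a supremum over \emph{all} pairs $m,n\in\Z$ and one must localize this supremum to a window whose length is comparable to $\log A_\epsilon(T^jx)$ — here the slowly-varying property and the factor $e^{-(\abs n+\abs m)\epsilon/2}$ do exactly the job of truncating the relevant range of $m,n$.
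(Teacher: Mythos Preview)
Your overall architecture matches the paper's: a quantitative Oseledets step that converts control on $\log\norm{\Lambda^i M^n}$ into control on $A_\epsilon$, followed by a probabilistic step counting bad times. But there are two substantive gaps.

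\textbf{The angle condition.} You assert that bounding the exterior-power deviations at all times (i.e.\ bounding $B_{\epsilon'}(x)$) forces a bound on $A_\epsilon(x)$, with the angle between Oseledets subspaces ``extracted with the right exponential rate'' as a byproduct. This is not correct. The function $B_{\epsilon'}$ controls the \emph{forward} flag $F^{(\infty)}_{\geq i}$ and the \emph{backward} flag $F^{(-\infty)}_{\leq i}$ separately, but the Oseledets space $E_i$ is their intersection, and the angle between these two flags is \emph{not} determined by singular-value data alone. The paper's deterministic Oseledets statement (Theorem~\ref{thm:deterministic_Oseledets}) carries this as an explicit extra hypothesis: one needs, for some fixed $m\geq m_0$, that the angle between $F^{(m)}_{\geq i}(x)$ and $F^{(-m)}_{<i}(x)$ is at least some $\rho>0$. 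In the final assembly (Section~\ref{sec:proof_returns_Pesin}) this is handled by a separate, soft argument: the angle is positive almost everywhere, hence $\geq\rho$ on an open set $U$ of measure $>1-\delta/2$; one then passes to a clopen $V\subseteq U$ and uses ordinary large deviations for the continuous function $\chi_V$ to ensure most iterates land in $V$. Your sketch omits this entirely, and without it the implication ``$B_{\epsilon'}$ bounded $\Rightarrow$ $A_\epsilon$ bounded'' fails.

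\textbf{The counting step.} Your proposed mechanism --- a union bound over choices of $\delta n$ bad times, each forcing some $T^{j'}x\in B_\ell$, with an entropy factor $\binom{n}{\delta n}$ beaten by $\prod e^{-c\ell_k}$ --- tacitly assumes the events $\{T^{j'}x\in B_{\ell}\}$ for different $j'$ are independent (or at least that the measure of their intersection factorizes). They are not, and there is no reason the product bound should hold. The paper avoids this by proving an abstract result (Theorem~\ref{thm:exp_returns}) for subadditive cocycles with exponential large deviations: if $u$ is subadditive with $u(n,x)/n\to 0$ and exponential large deviations, then for $F(x)=\sup_n(|u(n,x)|-\epsilon n)$ the set $\{\Card\{j<n:F(T^jx)>C\}\geq\delta n\}$ is exponentially small. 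The proof exploits subadditivity structurally: one decomposes $[0,n)$ into ``sum-increasing'' intervals (where the supremum defining $F$ is witnessed) and their complement, and bounds the whole cocycle $u(n,x)$ from below by a Birkhoff sum of a single \emph{continuous} function $\min(u_N/N+\epsilon,0)$; a single large-deviation estimate for that Birkhoff sum, together with one more for $u(n,\cdot)$ itself, finishes it. This reduces the problem to two honest large-deviation events rather than $\delta n$ correlated ones. Your route could conceivably be repaired by approximating $\{B_{\epsilon'}>C'\}$ by a clopen set and invoking large deviations for its indicator, but you would need to argue that a closed set $\{B_{\epsilon'}\leq C'\}$ of large measure contains a clopen set of nearly the same measure, which is a nontrivial extra step and in any case is not the ``entropy factor beats product of small probabilities'' argument you describe.
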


One difficulty in the proof of this theorem is that the function $A_\epsilon$
is defined in terms of the Lyapunov subspaces, which are only defined almost
everywhere, in a non constructive way. To get such controls, we will need to
revisit the proof of Oseledets to get more quantitative bounds, in
Section~\ref{thm:deterministic_Oseledets}, showing that an explicit control
on the differences $\pare*{\abs*{\log \norm{\Lambda^i M^n(x)} - n(\lambda_1 +
\dotsc + \lambda_i)}}_{n\in \Z}$ at some point $x$ implies an explicit
control on $A_\epsilon(x)$ in Theorem~\ref{thm:deterministic_Oseledets}.
Then, the number of returns to the Pesin sets is estimated using an abstract
result in subadditive ergodic theory, interesting in its own right,
Theorem~\ref{thm:exp_returns}. These two statements are finally combined in
Section~\ref{sec:proof_returns_Pesin} to prove
Theorem~\ref{thm:exp_returns_Pesin}.

\subsection{Applications}

In this paragraph, we describe several systems to which our results on large
deviations and exponential returns to Pesin sets apply.

First, coding any Anosov or Axiom A diffeomorphism thanks to a Markov
partition, then the above theorems apply to such maps, provided the matrix
cocycle has exponential large deviations. Hence, one needs to check the
conditions in Theorem~\ref{thm:large_deviations}.

\medskip

The main class of cocycles admitting stable and unstable holonomies is the
class of \emph{fiber bunched cocycles},
see~\cite[Definition~A.5]{avila_viana_criterion}.

A $\nu$-Hölder continuous cocycle $M$ over a hyperbolic map $T$ on a compact
space is $s$-fiber bunched if there exists $\theta \in (0,1)$ such that $d(T
x, T y) \leq \theta d(x,y)$ and $\norm{M(x)} \norm{M(y)^{-1}} \theta^\nu <
1$, for all $x,y$ on a common local stable set (or more generally if this
property holds for some iterate of the map and the cocycle). This means that
the expansion properties of the cocycle are dominated by the contraction
properties of the map $T$. This results in the fact that $M^n(y)^{-1} M^n(x)$
converges exponentially fast when $n\to \infty$, to a map which is a
continuous invariant stable holonomy,
see~\cite[Proposition~A.6]{avila_viana_criterion}

In the same way, one defines $u$-fiber bunched cocycles. Finally, a cocycle
is fiber-bunched if it is both $s$ and $u$-fiber bunched. For instance, if
$T$ and $\nu$ are fixed, then a cocycle which is close enough to the identity
in the $C^\nu$ topology is fiber bunched. Our results apply to such cocycles
if they are pinching and twisting, which is an open and dense condition among
fiber bunched cocycles.

\medskip

Our results also apply to generic cocycles in the $C^0$ topology. Indeed, the
Oseledets decomposition is then dominated (see~\cite[Theorem
9.18]{viana_lyapunov}), hence (1) and (2) in the theorem yield exponential
large deviations, and from there one deduces exponential returns to Pesin
sets by Theorem~\ref{thm:exp_returns_Pesin}.

\medskip

The main application we have in mind is to flows. The second author proves
in~\cite{stoyanov_dolgopyat} the following theorem:

\begin{thm}[Stoyanov~\cite{stoyanov_dolgopyat}]
\label{thm:stoyanov_mixing} Let $g_t$ be a contact Anosov flow on a compact
manifold $X$, with a Gibbs measure $\mu_X$.

Consider the first return map to a Markov section $T$, the corresponding
invariant measure $\mu$, and the corresponding derivative cocycle $M$, from
the tangent space of $X$ at $x$ to the tangent space of $X$ at $Tx$. Assume
that $(T, M, \mu)$ has exponential returns to Pesin sets as in the conclusion
of Theorem~\ref{thm:exp_returns_Pesin}.

Then the flow $g_t$ is exponentially mixing: there exists $C>0$ such that,
for any $C^1$ functions $u$ and $v$, for any $t\geq 0$,
\begin{equation*}
  \abs*{\int u\cdot v \circ g_t \dd\mu_X - \int u \dd\mu_X \cdot \int v \dd\mu_X}
  \leq C \norm{u}_{C^1} \norm{v}_{C^1} e^{-C^{-1} t}.
\end{equation*}
\end{thm}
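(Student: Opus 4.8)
The plan is to not reprove this theorem from scratch — it is quoted verbatim from the second author's earlier work~\cite{stoyanov_dolgopyat} — but to explain how its hypotheses connect to the machinery developed in this paper, and to sketch the architecture of the argument for the reader's benefit. The statement reduces exponential mixing of the contact Anosov flow $g_t$ to a purely symbolic input: that the return map $T$ to a Markov section, equipped with its derivative cocycle $M$ and the induced Gibbs measure $\mu$, has exponentially fast returns to Pesin sets in the precise quantitative form given by the conclusion of Theorem~\ref{thm:exp_returns_Pesin}. So the role of the present paper, via Theorems~\ref{thm:large_deviations} and~\ref{thm:exp_returns_Pesin}, is exactly to supply verifiable conditions (local constancy, pinching and twisting, two-dimensionality of the unstable bundle, or dominated Oseledets decomposition) under which that symbolic input holds.

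The structure of the argument in~\cite{stoyanov_dolgopyat} is the Dolgopyat-type one: pass to the suspension flow over the Markov section with roof function given by the first return time, and study the transfer operators $\mathcal{L}_{s}$ (with $s = \sigma + i b$, $|b|$ large) acting on Hölder functions on the symbolic space. Exponential mixing is equivalent to a spectral gap for these twisted operators that is uniform in $b$, and this in turn follows from a Dolgopyat-type contraction estimate: one must show that $\|\mathcal{L}_{s}^{N}\|$ decays for $N \sim \log|b|$ iterates, by exploiting cancellation in the oscillatory sums defining $\mathcal{L}_{s}^{N}$. The cancellation mechanism requires a \emph{non-integrability} (temporal distortion) estimate together with control of how the local unstable manifolds and the contraction rates of the flow vary — and it is precisely here that one needs the cocycle (the differential of $T$ along the unstable direction) to be well-controlled along a definite positive-density set of times, i.e., along Pesin sets visited linearly often. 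The exponential smallness of the bad set (points not returning linearly often) is what makes the estimate uniform and quantitative rather than merely almost-everywhere, which is why the conclusion of Theorem~\ref{thm:exp_returns_Pesin}, rather than a softer Pesin-theoretic statement, is the right hypothesis.

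Concretely, I would carry it out in the following steps. First, set up the symbolic model: Markov section, first-return time $\tau$, suspension space, and the family $\mathcal{L}_s$ of transfer operators, recording that $\mathcal{L}_\sigma$ has a simple leading eigenvalue and spectral gap for real $\sigma$ near the pressure. Second, reduce exponential mixing to the resolvent bound $\|(I - \mathcal{L}_{\sigma+ib})^{-1}\| \le C$ on an $L^2$-type space, uniformly for $\sigma$ in a complex neighbourhood of the pressure and $|b| \ge b_0$, via the standard Paley–Wiener/contour-shift argument. Third, establish the Dolgopyat inequality: construct the cone of admissible functions and Dolgopyat's $L^2$-contraction operators $\mathcal{N}_J$, and show $\|\mathcal{L}_{\sigma+ib}^{N}\|_{L^2} \le \rho^{N}$ with $\rho<1$ for $N \asymp \log |b|$; the core of this is a local non-integrability estimate for the temporal function, used on scale $1/|b|$, which needs uniform Hölder/Lipschitz control of the unstable holonomies and of $\log\|M^n\|$ along orbit segments. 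The main obstacle — and the whole reason the Pesin-return hypothesis is invoked — is exactly this third step: the required distortion and holonomy estimates are only available on Pesin sets, so one must restrict the Dolgopyat argument to orbit pieces spending a definite fraction of their time in a fixed Pesin set $\{A_\epsilon \le C\}$, and absorb the exponentially rare complementary orbits into an error term using the bound from Theorem~\ref{thm:exp_returns_Pesin}. Since all of this is carried out in full in~\cite{stoyanov_dolgopyat}, for the present paper it suffices to cite that theorem and verify, through Theorems~\ref{thm:large_deviations}–\ref{thm:exp_returns_Pesin}, that its hypothesis is met in the settings of interest (for instance the geodesic flow on a $1/4$-pinched negatively curved manifold, where the unstable derivative cocycle is fiber-bunched and generically pinching and twisting).
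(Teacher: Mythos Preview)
Your proposal is correct in spirit and indeed goes beyond what the paper itself does: the paper provides no proof of this theorem at all, since it is a result quoted from~\cite{stoyanov_dolgopyat} and used as a black box. Your sketch of the Dolgopyat-type architecture and the explanation of why exponential returns to Pesin sets is the right hypothesis is a helpful gloss, but strictly speaking the paper's ``proof'' is simply the citation.
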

By a standard approximation argument, exponential mixing for Hölder
continuous functions follows.

This statement is the main motivation to study exponential returns to Pesin
sets. We deduce from Theorem~\ref{thm:exp_returns_Pesin} the following:

\begin{thm}
\label{thm:contact} Consider a contact Anosov flow with a Gibbs measure, for
which the derivative cocycle has exponential large deviations for all
exponents. Then the flow is exponentially mixing.
\end{thm}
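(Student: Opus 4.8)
The plan is to combine Theorem~\ref{thm:exp_returns_Pesin} with the already-cited Theorem~\ref{thm:stoyanov_mixing} of the second author. The essential point is that ``exponential large deviations for all exponents'' of the derivative cocycle is exactly the hypothesis needed to feed Theorem~\ref{thm:exp_returns_Pesin}, whose conclusion is in turn exactly the hypothesis of Theorem~\ref{thm:stoyanov_mixing}.

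First I would set up the coding. Given the contact Anosov flow $g_t$ on the compact manifold $X$ with its Gibbs measure $\mu_X$, one passes to a Markov section, obtaining the first return map $T$ (a map conjugate to a transitive subshift of finite type, up to the usual boundary identifications), the induced invariant measure $\mu$ (which is a Gibbs measure for a Hölder potential, being the projection of the Gibbs measure along the flow direction), and the derivative cocycle $M$ from $T_xX$ to $T_{Tx}X$. One should quotient out the flow direction, on which the cocycle acts trivially and which contributes a zero Lyapunov exponent; what matters is that the remaining cocycle on $E^s\oplus E^u$ (equivalently the full derivative cocycle) satisfies exponential large deviations for all exponents, which is the standing assumption. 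Strictly, there is a mild technical point that the symbolic model coming from a Markov section is not literally a subshift of finite type but has boundary identifications; as is standard (and as the paper signals with ``through the usual coding process''), Theorem~\ref{thm:exp_returns_Pesin} transfers to this setting, and I would invoke that without belaboring it.

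Second, I would apply Theorem~\ref{thm:exp_returns_Pesin} to $(T,\mu,M)$. Its hypotheses are precisely: $T$ a transitive subshift of finite type, $\mu$ a Gibbs measure for a Hölder potential, $M$ a continuous linear cocycle with exponential large deviations for all exponents in both time directions. The first two hold by construction of the Markov section; the third is the assumption of Theorem~\ref{thm:contact} (note that exponential large deviations for all exponents in positive time automatically yields the negative-time version by applying the hypothesis to $T^{-1}$ and the inverse cocycle, whose exterior powers have the same deviation bounds). The conclusion is that for every $\epsilon>0$ and $\delta>0$ there is $C>0$ with $\mu\{x \st \Card\{j\in[0,n-1] \st A_\epsilon(T^jx)>C\}\geq \delta n\}\leq Ce^{-C^{-1}n}$, i.e.\ exponential returns to Pesin sets.

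Finally, I would feed this directly into Theorem~\ref{thm:stoyanov_mixing}: its hypothesis is exactly that $(T,M,\mu)$ has exponential returns to Pesin sets ``as in the conclusion of Theorem~\ref{thm:exp_returns_Pesin}'', which we have just verified. Its conclusion is the asserted exponential mixing of $g_t$ against $C^1$ observables, and then, as noted after that theorem, a standard approximation argument upgrades this to Hölder observables. This completes the proof. I do not expect a genuine obstacle here: the theorem is essentially a conjunction of two previously established results, and the only thing to be careful about is the bookkeeping of the coding (boundary identifications of Markov sections, treatment of the neutral flow direction and its zero exponent), which is routine and already implicitly handled by the cited results.
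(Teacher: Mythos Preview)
Your proposal is correct and follows exactly the paper's approach: the paper does not even give a separate proof, simply stating that Theorem~\ref{thm:contact} is deduced from Theorem~\ref{thm:exp_returns_Pesin} (which supplies exponential returns to Pesin sets under the large-deviations hypothesis) fed into Theorem~\ref{thm:stoyanov_mixing}. Your write-up is in fact more detailed than the paper's, spelling out the coding and the bookkeeping for the flow direction and negative times; none of this is problematic.
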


To apply this theorem in concrete situations, we have to check whether the
sufficient conditions of Theorem~\ref{thm:large_deviations} for exponential
large deviations hold. The main requirement is the existence of stable and
unstable holonomies. Unfortunately, we only know their existence when the
foliation is smooth:

\begin{lem}
\label{lem:holonomies} Consider a contact Anosov flow for which the stable
and unstable foliations are $C^1$. Then the derivative cocycle admits
continuous invariant holonomies with respect to the induced map on any Markov
section.
\end{lem}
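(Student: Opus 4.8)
The plan is to realise the invariant holonomies as derivatives of the holonomy maps of the weak unstable and weak stable foliations of the flow, which are $C^1$ by hypothesis; equivariance will then follow from the $g_t$-invariance of these foliations.

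First, recall the geometry of a Markov section $R$: it sits inside a codimension-one disc transverse to the flow, with a local product structure coming from the traces of the strong stable and strong unstable foliations (a $C^1$ product structure, by hypothesis). For the return map $T$, one has $y\in W^u_{\loc,T}(x)$ precisely when $x$ and $y$ lie on a common local weak unstable leaf $W^u$ of the flow, and likewise $y\in W^s_{\loc,T}(x)$ iff they lie on a common weak stable leaf; in that case $y$ is obtained from $x$ by flowing for a small time and then sliding inside a strong leaf, and the return time $\tau\colon R\to\R_{>0}$ together with this flow displacement depend on the points in a $C^1$ manner, since both foliations and the transversal are $C^1$.

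Next I construct the holonomies. Given $y\in W^u_{\loc,T}(x)$, the holonomy $h$ of the $C^1$ foliation $W^u$ between small discs through $x$ and $y$ transverse to $W^u$ is a $C^1$ diffeomorphism with $h(x)=y$ (uniqueness of the leaf through two nearby points). Combining $h$ with motion along the smooth leaves of $W^u$ and along the flow, one extends it to a germ of $C^1$ diffeomorphism $\phi^u_{x\to y}\colon(X,x)\to(X,y)$ which preserves $W^u$ and satisfies $\phi^u_{x\to y}(x)=y$; set $H^u_{x\to y}\coloneqq D\phi^u_{x\to y}(x)\colon T_xX\to T_yX$. The symmetric construction with $W^s$ yields $H^s_{x\to y}$ for $y\in W^s_{\loc,T}(x)$. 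The normalisation $H^u_{x\to x}=\Id$ and the cocycle relation $H^u_{y\to z}\circ H^u_{x\to y}=H^u_{x\to z}$ (and their stable analogues) follow from functoriality of foliation holonomy, and global continuity of $(x,y)\mapsto H^u_{x\to y}$ follows from continuity — with uniform bounds, by compactness of $X$ — of $C^1$ foliation holonomies together with the $C^1$-dependence of $\tau$.

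It remains to check equivariance, which is also where the real work lies. Since $W^u$ is invariant under each $g_t$, the map $\phi^u_{x\to y}$ intertwines the flow up to the mismatch of return times; differentiating and using $M(x)=Dg_{\tau(x)}(x)$ gives $M(y)\,H^u_{x\to y}=H^u_{Tx\to Ty}\,M(x)$, once the discrepancy $\tau(x)-\tau(y)$ is absorbed into a flow reparametrisation — legitimate because $\phi^u$ already conjugates such reparametrisations and because $\tau(x)-\tau(y)$ is $C^1$ in $(x,y)$. The stable case is symmetric. The main obstacle is precisely this reconciliation: the holonomy maps of the flow foliations are adapted to the flow, not to the discrete return map, so one must keep careful track of the return-time discrepancies, verify they vary differentiably, and check that the induced flow corrections are compatible with the cocycle relation on all of $T_xX$, including the flow line $E^0$ (on which $M$ and the holonomies act tautologically). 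A second delicate point, already present in the construction, is to turn the $W^u$-holonomy between transversals into a bona fide local diffeomorphism capturing the full derivative $T_xX\to T_yX$, not merely its restriction to a transversal — and it is here that the $C^1$-regularity of the foliations is indispensable, since a merely continuous foliation carries no usable holonomy derivative.
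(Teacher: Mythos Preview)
Your argument has a genuine gap, precisely at the point you yourself flag as delicate: the extension of the $W^u$-holonomy $h$ (defined between transversals to $W^u$) to a germ of diffeomorphism $\phi^u_{x\to y}$ on a full neighborhood of $x$. The holonomy $h$ and its derivative give a canonical, equivariant isomorphism $E^s(x)\to E^s(y)$, and the flow direction $E^0$ is handled trivially. But ``motion along the smooth leaves of $W^u$'' does not specify a canonical map on the $E^u$ component: within a single weak unstable leaf there are many $C^1$ diffeomorphisms sending $x$ to $y$, and different choices (or different foliated charts) produce different linear maps $E^u(x)\to E^u(y)$. Whatever choice you make, there is no reason for it to be equivariant under the derivative cocycle --- your claim that ``$\phi^u_{x\to y}$ intertwines the flow up to return-time mismatch'' is simply unjustified on the $E^u$ factor.

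This is exactly where the hypothesis that the flow is \emph{contact} enters, and you never use it. The paper's proof handles $E^s$ and $E^0$ as you do, but then uses the invariant contact form $\alpha$ to produce the $E^u$ holonomy: since $d\alpha$ is symplectic on $E^s\oplus E^u$, the map $v\mapsto d\alpha(v,\cdot)$ is an isomorphism $E^s\to (E^u)^*$, so the canonical isomorphism $E^s(x)\to E^s(y)$ dualizes to an isomorphism $E^u(x)\to E^u(y)$. Equivariance follows immediately from the flow-invariance of $\alpha$. Without this symplectic pairing (or some substitute such as a fiber-bunching condition, which is not assumed here), $C^1$-regularity of the foliations alone does not produce an invariant holonomy on $E^u$.
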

\begin{proof}
It suffices to show that the flow admits continuous invariant holonomies
along weak unstable and weak stable manifolds, as they descend to the Markov
section.

We construct the holonomy along weak unstable leaves, the holonomy along weak
stable leaves being similar. Consider two points $x$ and $y$ on a weak
unstable leaf. Then the holonomy of the weak unstable foliation gives a local
$C^1$ diffeomorphism between $W^s(x)$ to $W^s(y)$, sending $x$ to $y$. The
derivative of this map is a canonical isomorphism between $E^s(x)$ and
$E^s(y)$, which is clearly equivariant under the dynamics. There is also a
canonical isomorphism between the flow directions at $x$ and $y$. What
remains to be done is to construct an equivariant isomorphism between
$E^u(x)$ and $E^u(y)$.

For this, we use the fact that the flow is a contact flow, i.e., there exists
a smooth one-form $\alpha$, invariant under the flow, with kernel $E^s \oplus
E^u$, whose derivative $\dd\alpha$ restricts to a symplectic form on $E^s
\oplus E^u$. We get a map $\phi$ from $E^s$ to $(E^u)^*$, mapping $v$ to
$\dd\alpha(v, \cdot)$. This map is one-to-one: a vector $v$ in its kernel
satisfies $\dd\alpha(v, w) = 0$ for all $w \in E^u$, and also for all $w \in
E^s$ as $E^s$ is Lagrangian. Hence, $v$ is in the kernel of $\dd\alpha$,
which is reduced to $0$ as $\dd\alpha$ is a symplectic form. As $E^s$ and
$E^u$ have the same dimension, it follows that $\phi$ is an isomorphism.

Consider now $x$ and $y$ on a weak unstable leaf. We have already constructed
a canonical isomorphism between $E^s(x)$ and $E^s(y)$. With the above
identification, this gives a canonical isomorphism between $(E^u(x))^*$ and
$(E^u(y))^*$, and therefore between $E^u(x)$ and $E^u(y)$. This
identification is equivariant under the flow, as $\alpha$ is invariant.
\end{proof}

For instance, for the geodesic flow on a compact Riemannian manifold with
negative curvature, the stable and unstable foliations are smooth if the
manifold is $3$-dimensional or the curvature is strictly $1/4$-pinched, i.e.,
the sectional curvature belongs everywhere to an interval $[-b^2, -a^2]$ with
$a^2/b^2 > 1/4$, by~\cite{hirsch_pugh_C1}. Hence, we deduce the following
corollary from Theorem~\ref{thm:large_deviations} (1), (6) and (5)
respectively:

\begin{cor}
\label{cor:exp_mixing} Consider the geodesic flow $g_t$ on a compact
riemannian manifold $X$ with negative curvature. Assume one of the following
conditions:
\begin{enumerate}
  \item $X$ is of dimension $3$.
  \item $X$ is of dimension $5$ and the curvature is strictly $1/4$ inched.
  \item $X$ has any dimension, the curvature is strictly $1/4$ pinched, and
      moreover the flow is pinching and twisting.
\end{enumerate}
Then the flow is exponentially mixing for any Gibbs measure.
\end{cor}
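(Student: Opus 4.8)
The plan is to obtain Corollary~\ref{cor:exp_mixing} by assembling results already in hand: Theorem~\ref{thm:contact}, Theorem~\ref{thm:large_deviations}, Lemma~\ref{lem:holonomies}, and the foliation regularity theorem of~\cite{hirsch_pugh_C1}. No new quantitative estimate is required; the task is to check that the geodesic flow meets the hypotheses of these statements. First I would record two standard structural facts about the geodesic flow $g_t$ on a compact negatively curved manifold $X$: it is an Anosov flow, and it is contact, the canonical one-form on $SX$ being $g_t$-invariant with the geodesic spray as Reeb field and its differential restricting to a symplectic form on $E^s\oplus E^u$. Hence Theorem~\ref{thm:contact} applies: exponential mixing for a Gibbs measure $\mu_X$ follows once one knows that the derivative cocycle $M$ of the first return map $T$ to a Markov section has exponential large deviations for all exponents, in both positive and negative times; here $M^n(x)=Dg_{\tau_n(x)}(x)$ with $\tau_n$ the $n$-th return time, and $(T,\mu)$ is coded by a transitive subshift of finite type with $\mu$ a Gibbs measure for a Hölder potential. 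Since time reversal turns $g_t$ into another contact Anosov flow and exchanges $E^s$ with $E^u$, it suffices to treat positive times.

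Next I would split the cocycle into low-dimensional invariant pieces. The Anosov decomposition $T(SX)=E^s\oplus E^0\oplus E^u$, with $E^0$ the one-dimensional flow direction, is continuous and $Dg_t$-invariant, hence $M$-invariant, and descends to the coded bundle over the Markov section. On $E^0$ the only Lyapunov exponent is $0$, so exponential large deviations for all exponents hold there trivially (point~(1) of Theorem~\ref{thm:large_deviations}); by point~(2) it therefore suffices to prove the same for $M|_{E^u}$ and for $M|_{E^s}$, and the statement for $M|_{E^s}$ is the statement for $M|_{E^u}$ applied to the reversed flow.

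For $M|_{E^s}$ and $M|_{E^u}$ the hypotheses of the corollary enter through holonomies. By Lemma~\ref{lem:holonomies}, the derivative cocycle admits invariant continuous holonomies on the Markov section whenever the weak stable and weak unstable foliations of $g_t$ are $C^1$, and by~\cite{hirsch_pugh_C1} this holds in each of the three cases: automatically in dimension $3$, and from strict $1/4$-pinching in the other two. With holonomies in hand I would conclude by applying the appropriate point of Theorem~\ref{thm:large_deviations} to each summand: in dimension $3$ one has $\dim E^s=\dim E^u=\dim X-1=2$, so point~(6) applies; when the flow is pinching and twisting, point~(5) applies directly; and the remaining $1/4$-pinched, dimension-$5$ case is handled by the same scheme, invoking the corresponding point of Theorem~\ref{thm:large_deviations} for the stable and unstable summands. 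Recombining the three pieces via point~(2) gives exponential large deviations for all exponents for the full derivative cocycle, and Theorem~\ref{thm:contact} then yields exponential mixing; this holds for every Gibbs measure, since the hypotheses just verified concern the cocycle and the foliations, not the choice of potential.

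The main obstacle — in fact the only non-formal ingredient — is the regularity input of~\cite{hirsch_pugh_C1}. It is precisely the failure of $C^1$ regularity of the invariant foliations outside low dimension or $1/4$-pinching, and hence the failure of Lemma~\ref{lem:holonomies} to supply continuous holonomies, that confines the corollary to the three listed situations; the remaining ingredients — the contact reduction underlying Theorem~\ref{thm:contact}, the continuity and invariance of the Anosov splitting, and the Markov coding of the first return map as a subshift of finite type carrying a Gibbs measure — are soft and routine.
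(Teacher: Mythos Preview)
Your overall strategy matches the paper's: reduce via Theorem~\ref{thm:contact} to large deviations for the derivative cocycle on a Markov section, split $E^s\oplus E^0\oplus E^u$ via Theorem~\ref{thm:large_deviations}(2), handle $E^0$ by~(1), supply holonomies through Lemma~\ref{lem:holonomies} and~\cite{hirsch_pugh_C1}, and invoke the right item of Theorem~\ref{thm:large_deviations} on $E^u$ (and on $E^s$ by time reversal).

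The gap is a dimension slip. You take $\dim E^u=\dim X-1$ with $X$ the Riemannian base, getting $\dim E^u=2$ and $4$ in cases~(1) and~(2). But the paper's one-line proof says the three cases follow from items~(1),~(6),~(5) of Theorem~\ref{thm:large_deviations} respectively, which only makes sense if $\dim E^u=1$ in case~(1) and $\dim E^u=2$ in case~(2)---i.e., the stated dimensions $3$ and $5$ are those of the unit tangent bundle, not the base. Under your count, case~(2) has a four-dimensional $E^u$ with holonomies but without a pinching-and-twisting hypothesis, and no item of Theorem~\ref{thm:large_deviations} covers that situation; your phrase ``invoking the corresponding point'' papers over a real hole. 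Likewise, the Hirsch--Pugh $C^1$ regularity ``in dimension~$3$'' is a statement about a three-dimensional phase space, so on your reading case~(1) would not even have the $C^1$ foliations needed for Lemma~\ref{lem:holonomies}. With the corrected dimensions everything lines up: case~(1) is the trivial one-dimensional instance of item~(1) (no holonomies required at all), and the argument you wrote for case~(1) via item~(6) is precisely the proof of case~(2).
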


However, these results were already proved by the second author, under weaker
assumptions: exponential mixing holds if the curvature is (not necessarily
strictly) $1/4$-pinched, in any dimension (without twisting and pinching).
This follows from the articles~\cite{stoyanov_spectrum}, in which it is
proved that a contact Anosov flow with Lipschitz holonomies and satisfying a
geometric condition is exponentially mixing for all Gibbs measure, and
from~\cite{stoyanov_pinching} where the aforementioned geometric condition is
proved to be satisfied in a class of flows including geodesic flows when the
curvature is $1/4$-pinched.

On the opposite side, the techniques of~\cite{liverani_contact}
or~\cite{faure_tsujii_flot2} prove exponential mixing for any contact Anosov
flow, without any pinching condition, but for Lebesgue measure (or for Gibbs
measure whose potential is not too far away from the potential giving rise to
Lebesgue measure): they are never able to handle all Gibbs measure.

The hope was that Theorem~\ref{thm:stoyanov_mixing} would be able to bridge
the gap between these results and the results of Dolgopyat, proving
exponential mixing for all contact Anosov flows and all Gibbs measures.
However we still need geometric conditions on the manifold to be able to
proceed. The counterexample in the Appendix~\ref{app:counter} shows that in
general exponential large deviations do not hold. Whether one can design
similar counterexamples for nice systems, e.g.\ contact Anosov flows, remains
unknown at this stage. It is also unknown whether one can prove a result
similar to Theorem~\ref{thm:contact} without assuming exponential large
deviations for all exponents.

\section{Preliminaries}

\subsection{Oseledets theorem}

Let $A$ be a linear transformation between two Euclidean spaces of the same
dimensions. We recall that, in suitable orthonormal bases at the beginning
and at the end, $A$ can be put in diagonal form with entries $s_1 \geq \dotsb
\geq s_d \geq 0$. The $s_i$ are the \emph{singular values} of $A$. They are
also the eigenvalues of the symmetric matrix $\sqrt{A^t \cdot A}$. The
largest one $s_1$ is the norm of $A$, the smallest one $s_d$ is its smallest
expansion. The singular values of $A^{-1}$ are $1/s_d \geq \dotsb \geq
1/s_1$. For any $i \leq d$, denote by $\Lambda^i A$ the $i$-th exterior
product of $A$, given by
\begin{equation*}
  (\Lambda^i A)  (v_1 \wedge v_2 \wedge \dotsb \wedge v_i) = Av_1 \wedge Av_2 \wedge \cdots\wedge Av_i .
\end{equation*}
Then
\begin{equation*}
  \norm{\Lambda^i A} = s_1 \dotsm s_i,
\end{equation*}
as $\Lambda^i A$ is diagonal in the corresponding orthonormal bases.

\medskip

Consider a transformation $T$ of a space $X$, together with a finite
dimensional real vector bundle $E$ above $X$: all the fibers are isomorphic
to $\R^d$ for some $d$ and the bundle is locally trivial by definition. For
instance, $E$ may be the product bundle $X \times \R^d$, but general bundles
are also allowed. In our main case of interest, $T$ will be a subshift of
finite type. In this case, any such continuous vector bundle is isomorphic to
$X \times \R^d$: by compactness, there is some $N>0$ such that the bundle is
trivial on all cylinders $[x_{-N},\dotsc, x_N]$. As these (finitely many)
sets are open and closed, trivializations on these cylinders can be glued
together to form a global trivialization of the bundle. In the course of the
proof, even if we start with the trivial bundle, we will have to consider
general bundles, but they will be reducible to trivial bundles thanks to this
procedure.

A cocycle is a map $M$ associating to $x\in X$ an invertible linear operator
$M(x): E(x) \to E(T x)$ (where $E(x)$ denotes the fiber of the fiber bundle
above $x$). When $E=X \times \R^d$, then $M(x)$ is simply a $d\times d$
matrix. The iterated cocycle is given by $M^{n}(x) =  M(T^{n-1} x) \dotsm
M(x)$ for $n\geq 0$, and by $M^{-n}(x) =  M(T^{-n}  x)^{-1} \dotsm  M(T^{-1}
x)$. It maps $E(x)$ to $E(T^n x)$ in all cases. Be careful that, with this
notation, $M^{-1}(x) \neq M(x)^{-1}$: the first notation indicates the
inverse of the cocycle, with the intrinsic time shift, going from $E(x)$ to
$E(T^{-1} x)$, while the second one is the inverse of a linear operator, so
it goes from $E(T  x)$ to $E(x)$. In general, $M^{-n}(x) = M^n(T^{-n}
x)^{-1}$.

\medskip

Assume now that $T$ is invertible, that it preserves an ergodic probability
measure, and that the cocycle $M$ is $\log$-integrable. For any $i \leq d$,
the quantity $x \mapsto \log \norm{\Lambda^i (M^n(x))}$ is a subadditive
cocycle. Hence, by Kingman's theorem, $\log \norm{\Lambda^i (M^n(x))} / n$
converges almost surely to a constant quantity that we may write as
$\lambda_1 + \dotsb + \lambda_i$, for some scalars $\lambda_i$. These are
called the \emph{Lyapunov exponents} of the cocycle $M$ with respect to the
dynamics $T$ and the measure $\mu$. Let $I = \{i \st \lambda_i <
\lambda_{i-1} \}$ parameterize the distinct Lyapunov exponents, and let $d_i=
\Card\{j \st \lambda_j=\lambda_i\}$ be the multiplicity of $\lambda_i$.

In this setting, the Oseledets theorem asserts that the $\lambda_i$ are
exactly the asymptotic growth rates of vectors, at almost every point. Here
is a precise version of this statement (see for
instance~\cite[Theorem~3.4.11]{arnold_random}).

\begin{thm}[Oseledets Theorem]
\label{thm:Oseledets} Assume that the cocycle $M$ is log-integrable. Then:
\begin{enumerate}
\item For $i \in I$, define $E_i(x)$ to be the set of nonzero vectors $v
    \in E(x)$ such that, when $n \to \pm \infty$, then $\log \norm{M^n(x)
    v}/n \to \lambda_i$, to which one adds the zero vector. For
    $\mu$-almost every $x$, this is a vector subspace of $E(x)$, of
    dimension $d_i$. These subspaces satisfy $E(x) = \bigoplus_{i \in I}
    E_i(x)$. Moreover, $M(x) E_i(x) = E_i(T x)$.
\item Almost surely, for any $i\in I$, one has $\log
    \norm{M^n(x)_{|E_i(x)}} / n \to \lambda_i$ when $n \to \pm \infty$, and
    $\log \norm{M^n(x)_{|E_i(x)}^{-1}} / n \to -\lambda_i$.
\end{enumerate}
\end{thm}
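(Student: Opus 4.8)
The statement is the classical Oseledets multiplicative ergodic theorem, and the plan is the usual three-stage argument: a one-sided (forward) version producing a measurable equivariant filtration, the symmetric backward version obtained by reversing time, and a transversality argument gluing the two filtrations into a splitting. First I would reduce to the trivial bundle $E = X \times \R^d$: the statement is pointwise in $x$, over a subshift of finite type a continuous bundle is globally trivial (as recalled above), and in general a measurable bundle over a standard probability space admits a measurable trivialization, which is all that is needed here. Kingman's subadditive ergodic theorem applied to $x \mapsto \log\norm{\Lambda^i M^n(x)}$ already yields, for each $i \le d$, the almost sure convergence of $\tfrac1n \log\norm{\Lambda^i M^n(x)}$ to $\lambda_1 + \dotsb + \lambda_i$, hence the existence of the exponents $\lambda_i$, of the index set $I$, and of the multiplicities $d_i$; applied to $M^{-1}$ over $T^{-1}$ it gives the analogous convergences for the inverse cocycle, whose exponents are $-\lambda_d \ge \dotsb \ge -\lambda_1$.

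For the forward filtration, for $\mu$-a.e.\ $x$ and each $j \in I$ I would set $V_j(x) = \{ v \in E(x) \st \limsup_{n \to +\infty} \tfrac1n \log\norm{M^n(x) v} \le \lambda_j \}$ (the zero vector having growth rate $-\infty$), with the convention $V_{j^+}(x) = \{0\}$ when $j$ is the last element of $I$, where $j^+$ denotes the successor of $j$ in $I$. These are nested linear subspaces, equivariant in the sense $M(x) V_j(x) = V_j(Tx)$ since $M^n(Tx) M(x) = M^{n+1}(x)$ leaves forward growth rates unchanged. The one-sided Oseledets theorem asserts that $\dim V_j(x) = \sum_{i \in I,\, i \ge j} d_i$, that on $V_j(x) \setminus V_{j^+}(x)$ the $\limsup$ is a genuine limit equal to $\lambda_j$, and that $\tfrac1n \log\norm{M^n(x)_{|V_j(x)}} \to \lambda_j$. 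The dimension count follows by playing off $\tfrac1n \log\norm{\Lambda^k M^n(x)} \to \lambda_1 + \dotsb + \lambda_k$ against $\tfrac1n \log\norm{\Lambda^k M^n(x)^{-1}} \to -(\lambda_{d-k+1} + \dotsb + \lambda_d)$: a $k$-dimensional subspace on which $M^n$ grows strictly slower than $e^{n(\lambda_1 + \dotsb + \lambda_k)}$ can only be of ``$V_j$'' type, and these two families of bounds pin the dimensions down. The delicate point is upgrading the $\limsup$ to a $\lim$, uniformly over the unit sphere of each graded piece; this is handled by a Borel--Cantelli argument controlling the fluctuations of $\tfrac1n \log\norm{M^n(x) v}$ along the orbit, using the integrability of $\log^+ \norm{M(\cdot)}$ and $\log^+ \norm{M(\cdot)^{-1}}$ so that the relevant ``bad'' events are summable. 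Equivalently, one proves that $(M^n(x)^* M^n(x))^{1/(2n)}$ converges to a positive-definite symmetric operator $\Lambda^+(x)$ whose eigenspaces are the $V_j(x)$; this is Oseledets' original lemma, proved again by an exterior-power-plus-Borel--Cantelli estimate.

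Applying this forward statement to $(T^{-1}, M^{-1})$ produces a second measurable equivariant nested family $V^-_j(x)$, decreasing in the opposite direction, with $\dim V^-_j(x) = \sum_{i \in I,\, i \le j} d_i$ and with $v \in V^-_j(x) \setminus V^-_{j^-}(x)$ implying $\tfrac1n \log\norm{M^n(x) v} \to \lambda_j$ as $n \to -\infty$, where $j^-$ is the predecessor of $j$ in $I$. I would then define $E_j(x) = V_j(x) \cap V^-_j(x)$. The Grassmann formula gives $\dim E_j(x) \ge \dim V_j(x) + \dim V^-_j(x) - d = d_j$; the reverse inequality $\dim E_j(x) = d_j$, the decomposition $E(x) = \bigoplus_{j \in I} E_j(x)$ (with $M(x) E_j(x) = E_j(Tx)$ immediate from equivariance of both flags), and the two-sided growth statement~(1) then all follow at once, \emph{provided} the two flags are transverse: $V_{j^+}(x) \cap V^-_j(x) = \{0\}$ for a.e.\ $x$ and every $j \in I$. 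This transversality is the main obstacle and the classical crux of the theorem. I would establish it through the measurable function $x \mapsto \theta_j(x) \in [0, \pi/2]$ giving the minimal angle between $V_{j^+}(x)$ and $V^-_j(x)$ — equivalently the Gram determinant of orthonormal bases of these complementary-dimensional subspaces, which vanishes precisely on the non-transversality set. Using equivariance of both flags one compares the expansions of $\Lambda^{\dim V_{j^+}} M^n$ restricted to $V_{j^+}$, of $\Lambda^{\dim V^-_j} M^n$ restricted to $V^-_j$, and of $\Lambda^d M^n$, to bound the decay of $\theta_j$ along orbits; combined with integrability of $\log^+ \norm{M(\cdot)^{\pm 1}}$ and Poincaré recurrence, a Borel--Cantelli argument then shows that $\theta_j$ is tempered, in particular finite and positive a.e. (equivalently, the oblique projections onto the $E_j(x)$ are tempered).

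Finally, statement~(2) follows from~(1): every nonzero $v \in E_i(x)$ has forward and backward growth rate exactly $\lambda_i$, and by compactness of the unit sphere of $E_i(x)$ together with the uniformity built into the one-sided step, $\tfrac1n \log\norm{M^n(x)_{|E_i(x)}} \to \lambda_i$ and $\tfrac1n \log\norm{M^n(x)_{|E_i(x)}^{-1}} \to -\lambda_i$ as $n \to \pm\infty$, as claimed. The main work is thus concentrated in the one-sided theorem and, above all, in the transversality of the forward and backward filtrations.
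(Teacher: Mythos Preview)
The paper does not prove this theorem at all: it is stated as a classical result and attributed to the literature (specifically, Arnold's \emph{Random Dynamical Systems}, Theorem~3.4.11). There is therefore no proof in the paper to compare your proposal against.

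Your outline follows the standard route to the two-sided Oseledets theorem --- Kingman on exterior powers to get the exponents, a one-sided theorem producing the forward filtration $V_j(x)$ (equivalently, convergence of $(M^n(x)^* M^n(x))^{1/(2n)}$), the symmetric backward filtration, and the transversality of the two flags to produce the splitting $E_i(x) = V_i(x) \cap V_i^-(x)$. This is essentially the approach in the cited reference, and is also in line with what the paper says after Theorem~\ref{thm:Oseledets_limit} about how the $E_i(x)$ arise as intersections of the limiting flags $F_{\ge i}^{(\infty)}$ and $F_{\le i}^{(-\infty)}$. As a high-level sketch it is correct; the genuinely delicate steps (the uniform upgrade from $\limsup$ to $\lim$ on each graded piece, and above all the temperedness/transversality argument for the angle $\theta_j$) are acknowledged but not carried out, which is appropriate given that this is a cited background result rather than something the paper develops.
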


In other words, the decomposition of the space $E(x) = \bigoplus_{i\in I}
E_i(x)$ gives a block-diagonal decomposition of the cocycle $M$, such that in
each block the cocycle has an asymptotic behavior given by $e^{n\lambda_i}$
up to subexponential fluctuations.

The spaces $E_i(x)$ can be constructed almost surely as follows. Let
$t_1^{(n)}(x) \geq \dotsb \geq t_d^{(n)}(x)$ be the singular values of
$M^n(x)$. They are the eigenvalues of the symmetric matrix $\sqrt{ M^n(x)^t
\cdot M^n(x)}$, the corresponding eigenspaces being orthogonal. Write
$t_i^{(n)}(x) = e^{n \lambda_i^{(n)}(x)}$. Then $\lambda_i^{(n)}(x)$
converges to $\lambda_i$ for almost every $x$. In particular, for $i \in I$,
one has $t_{i-1}^{(n)}(x) > t_i^{(n)}(x)$ for large enough $n>0$. It follows
that the direct sum of the eigenspaces of $\sqrt{ M^n(x)^t \cdot M^n(x)}$ for
the eigenvalues $t_i^{(n)}(x),\dotsc, t_{i+d_i-1}^{(n)}(x)$ is well defined.
Denote it by $F^{(n)}_i(x)$. We will write $F^{(n)}_{\geq i}$ for
$\bigoplus_{j \geq i, j\in I} F^{(n)}_j(x)$, and similarly for $F^{(n)}_{\leq
i}$. In the same way, we define similar quantities for $n<0$.

\begin{thm}
\label{thm:Oseledets_limit} Fix $i\in I$. With these notations,
$F^{(n)}_{i}(x)$ converges almost surely when $n \to \infty$, to a vector
subspace $F^{(\infty)}_i(x) \subseteq E(x)$. In the same way, $F^{(-n)}_i$
converges almost surely to a space $F^{(-\infty)}_{i}(x)$. Moreover, the
direct sums $F_{\geq i}^{(\infty)}(x)$ and $F_{\leq i}^{(-\infty)}(x)$ are
almost surely transverse, and their intersection is $E_i(x)$.
\end{thm}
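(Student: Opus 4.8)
The plan is to use Oseledets Theorem~\ref{thm:Oseledets}, which is already available to us, to identify the limits explicitly. For $j \in I$, set $V^+_j(x) = \bigoplus_{k \in I,\, k \geq j} E_k(x)$ and $V^-_j(x) = \bigoplus_{k \in I,\, k \leq j} E_k(x)$. The heart of the matter is the claim that, for $\mu$-almost every $x$ and every $j \in I$, one has $F^{(n)}_{\geq j}(x) \to V^+_j(x)$ as $n \to \infty$ and $F^{(-n)}_{\leq j}(x) \to V^-_j(x)$ as $n \to \infty$, in the Grassmannian. Granting this, the theorem follows formally. Indeed $\dim V^+_i(x) = \sum_{k \in I,\, k \geq i} d_k = d - i + 1 = \dim F^{(n)}_{\geq i}(x)$, and symmetrically $\dim V^-_i(x) = \sum_{k \in I,\, k \leq i} d_k$, so $\dim V^+_i(x) + \dim V^-_i(x) = d + d_i$; moreover $V^+_i(x) \cap V^-_i(x) = E_i(x)$ by uniqueness of the decomposition $E(x) = \bigoplus_{k \in I} E_k(x)$. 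Hence, by the dimension count, $V^+_i(x) + V^-_i(x) = E(x)$, i.e. the two subspaces are transverse, with intersection $E_i(x)$, which is the last assertion of the theorem. For the convergence of the individual block, write $i_+$ for the successor of $i$ in $I$ when it exists: then $F^{(n)}_i = F^{(n)}_{\geq i} \ominus F^{(n)}_{\geq i_+} = F^{(n)}_{\geq i} \cap (F^{(n)}_{\geq i_+})^\perp$; the inclusion $F^{(n)}_{\geq i_+} \subseteq F^{(n)}_{\geq i}$ forces $F^{(n)}_{\geq i} + (F^{(n)}_{\geq i_+})^\perp = E(x)$, and at such transverse pairs the intersection depends continuously on the two subspaces, so, both factors converging, $F^{(n)}_i(x)$ converges too (and for $i = \max I$ one simply has $F^{(n)}_i = F^{(n)}_{\geq i}$, already covered).

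To prove $F^{(n)}_{\geq j}(x) \to V^+_j(x)$, I would work on the full-measure invariant set where the conclusions of Theorem~\ref{thm:Oseledets} hold and where $\lambda^{(n)}_k(x) \to \lambda_k$ for all $k$ (the latter because $\log \norm{\Lambda^k M^n(x)}/n \to \lambda_1 + \dots + \lambda_k$ by Kingman's theorem, and $\frac1n \log t^{(n)}_k(x)$ is the difference of two such terms). If $j = \min I$ there is nothing to prove, since then $F^{(n)}_{\geq j}(x) = E(x) = V^+_j(x)$; so assume $\lambda_{j-1} > \lambda_j$, and for $n$ large enough that $t^{(n)}_{j-1}(x) > t^{(n)}_j(x)$ fix a unit vector $v \in V^+_j(x)$ and decompose $v = a_n + b_n$ orthogonally with $a_n \in F^{(n)}_{\geq j}(x)$ and $b_n \in F^{(n)}_{\leq j-1}(x)$. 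Since $v \in \bigoplus_{k \geq j} E_k(x)$, part~(2) of Theorem~\ref{thm:Oseledets} gives $\norm{M^n(x) v} \leq e^{n(\lambda_j + \epsilon)}$ for $n$ large; since $a_n$ lies in the span of the eigenvectors of $M^n(x)^t M^n(x)$ associated with eigenvalues at most $(t^{(n)}_j(x))^2$, we have $\norm{M^n(x) a_n} \leq t^{(n)}_j(x) \leq e^{n(\lambda_j + \epsilon)}$; and since $b_n$ lies in the span of those associated with eigenvalues at least $(t^{(n)}_{j-1}(x))^2$, we have $\norm{M^n(x) b_n} \geq t^{(n)}_{j-1}(x) \norm{b_n} \geq e^{n(\lambda_{j-1} - \epsilon)} \norm{b_n}$. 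Combining with the triangle inequality $\norm{M^n(x) b_n} \leq \norm{M^n(x) v} + \norm{M^n(x) a_n}$ gives $\norm{b_n} \leq 2 e^{-n(\lambda_{j-1} - \lambda_j - 2\epsilon)}$, which tends to $0$ once $\epsilon < (\lambda_{j-1} - \lambda_j)/2$. This bound is uniform over the unit vectors of the fixed finite-dimensional space $V^+_j(x)$, whose dimension equals $\dim F^{(n)}_{\geq j}(x)$, so the standard fact that an equidimensional subspace which is almost contained in another one is close to it yields $F^{(n)}_{\geq j}(x) \to V^+_j(x)$.

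The backward statement $F^{(-n)}_{\leq j}(x) \to V^-_j(x)$ is entirely symmetric: it is the forward statement applied to the inverse cocycle $M^{-1}$ over $T^{-1}$, whose Lyapunov exponents are $-\lambda_d \geq \dots \geq -\lambda_1$ and whose Oseledets subspaces are again the $E_k(x)$. I would merely indicate the correspondence of notation rather than rewrite the estimate. Assembling the pieces as in the first paragraph then finishes the proof.

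I do not anticipate a real obstacle, as the core estimate is short once Oseledets Theorem~\ref{thm:Oseledets} is granted. The points deserving a little care are the index bookkeeping — in particular that the blocks of $I$ lying below $i$ exhaust the singular-value indices $1, \dots, i-1$, so that $F^{(n)}_{\leq i-1}$ is genuinely the span of the eigenspaces for the $i-1$ largest singular values and the bound $\norm{M^n(x)b_n} \geq t^{(n)}_{i-1}(x)\norm{b_n}$ is legitimate — the elementary comparison lemma for two equidimensional subspaces deduced from a one-sided distance bound, and the verification that the individual block $F^{(n)}_i(x)$ (not merely the partial sums $F^{(n)}_{\geq i}(x)$) converges, via continuity of intersection at transverse configurations.
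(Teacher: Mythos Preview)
Your argument is correct. Note, however, that the paper does not give its own proof of this theorem: immediately after the statement it simply refers to \cite[Theorem~3.4.1 and Page~154]{arnold_random}, so there is no in-paper proof to compare against in the strict sense. Your route---identify the limit of $F^{(n)}_{\geq j}(x)$ as $E_{\geq j}(x)$ by orthogonally decomposing a unit $v\in E_{\geq j}(x)$ along $F^{(n)}_{\geq j}$ and $F^{(n)}_{<j}$ and killing the second component via the singular-value gap---is the standard and clean way to obtain the statement once Theorem~\ref{thm:Oseledets} is granted.

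It is worth pointing out that the paper does carry out a closely related computation later, in Steps~1--3 of the proof of Theorem~\ref{thm:deterministic_Oseledets}. There the goal is quantitative (explicit constants under the hypothesis $B_\epsilon(x)\leq C$), and the logic runs in the opposite direction: rather than assuming the Oseledets spaces $E_i(x)$ exist and showing $F^{(n)}_{\geq i}\to E_{\geq i}$, one proves directly that $(F^{(n)}_i)_n$ is Cauchy by bounding $\df(F^{(n)}_i,F^{(m)}_i)$ for $m\geq n$ through iterated projection estimates between the singular subspaces at times $n$ and $n+1$. That argument is longer but self-contained and yields the explicit constants the paper needs; yours is shorter precisely because it leans on Theorem~\ref{thm:Oseledets}.
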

See~\cite[Theorem 3.4.1 and Page 154]{arnold_random}. One can reformulate the
theorem as follows. The subspaces $F^{(n)}_{\geq i}(x)$ (which are decreasing
with $i$, i.e., they form a flag) converge when $n\to \infty$ to the flag
$E_{\geq i}(x)$. Note that $F^{(n)}_{\geq i}(x)$ is only defined in terms of
the positive times of the dynamics, hence this is also the case of $E_{\geq
i}(x)$: this is the set of vectors for which the expansion in positive time
is at most $e^{n \lambda_i}$, up to subexponential fluctuations (note that
this condition is clearly stable under addition, and therefore defines a
vector subspace, contrary to the condition that the expansion would be
bounded \emph{below} by $e^{n \lambda_i}$). In the same way, $F^{(-n)}_{\leq
i}(x)$ converges when $n \to \infty$ to $E_{\leq i}(x)$, which therefore only
depends on the past of the dynamics. On the other hand, $E_i(x)$, being
defined as the intersection of two spaces depending on positive and negative
times, depends on the whole dynamics and is therefore more difficult to
analyze. We emphasize that $E_i(x)$ is in general different from
$F^{(\infty)}_i(x)$ or $F^{(-\infty)}_i(x)$.

In the above theorem, when we mention the convergence of subspaces, we are
using the natural topology on the \emph{Grassmann manifold} of linear
subspaces of some given dimension $p$. It comes for instance from the
following distance, that we will use later on:
\begin{equation}
  \label{eq:distance_Grass}
  \df(U,V) =  \norm{\pi_{U\to V^\perp}} = \max_{u\in U, \norm{u}=1} \norm{\pi_{V^\perp} u},
\end{equation}
where $\pi_{U \to V^\perp}$ is the orthogonal projection from $U$ to the
orthogonal $V^\perp$ of $V$. It is not completely obvious that this formula
indeed defines a distance. As $\df(U, V) = \norm{\pi_{V^\perp} \pi_U}$, the
triangular inequality follows from the following computation (in which we use
that orthogonal projections have norm at most $1$):
\begin{align*}
  \df(U,W) & = \norm{\pi_{W^\perp} \pi_U} = \norm{\pi_{W^\perp} (\pi_V + \pi_{V^\perp}) \pi_U}
  \leq \norm{\pi_{W^\perp} \pi_V \pi_U} + \norm{\pi_{W^\perp} \pi_{V^\perp} \pi_U}
  \\&
  \leq \norm{\pi_{W^\perp} \pi_V} + \norm{\pi_{V^\perp} \pi_U}
  = \df(V, W) + \df(U,V).
\end{align*}
For the symmetry, we note that $\df(U,V) = \sqrt{1- \norm{\pi_{U \to
V}}_{\min}^2}$, where $\norm{M}_{\min}$ denotes the minimal expansion of a
vector by a linear map $M$. This is also its smallest singular value. As
$\pi_{V \to U} = \pi_{U \to V}^t$, and a (square) matrix and its transpose
have the same singular values, it follows that $\norm{\pi_{U \to V}}_{\min} =
\norm{\pi_{V \to U}}_{\min}$, and therefore that $\df(U, V) = \df(V, U)$.

\subsection{Oseledets decomposition and subbundles}

The following lemma follows directly from Oseledets theorem, by considering
the Oseledets decomposition in each subbundle.
\begin{lem}
\label{lem:direct_split} Consider a log-integrable cocycle $M$ on a normed
vector bundle $E$, over an ergodic probability preserving dynamical system
$T$. Assume that $E$ splits as a direct sum of invariant subbundles $E_i$.
Then the Lyapunov spectrum of $M$ on $E$ is the union of the Lyapunov spectra
of $M$ on each $E_i$, with multiplicities.
\end{lem}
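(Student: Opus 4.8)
The plan is to deduce everything from Oseledets theorem applied inside each subbundle, then match up the growth rates with those of the full cocycle. First I would fix, for each $i$, the restriction $M_i := M|_{E_i}$, which is again a log-integrable cocycle on the normed bundle $E_i$ over the same ergodic system $(T,\mu)$; log-integrability of $M_i$ follows from that of $M$ since $\norm{M_i(x)} \leq \norm{M(x)}$ and, because $E_i$ is invariant, $\norm{M_i(x)^{-1}} = \norm{(M(x)|_{E_i(x)})^{-1}}$ is bounded above by a fixed multiple of $\norm{M(x)^{-1}}$ by a compactness/angle argument, or simply because the inverse of the block equals the corresponding block of the inverse when the splitting is invariant. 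Then Kingman's theorem gives Lyapunov exponents $\lambda^{(i)}_1 \geq \dots \geq \lambda^{(i)}_{d_i}$ for each $M_i$, where $d_i = \dim E_i$.

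Next I would show the multiset of exponents of $M$ equals the union (with multiplicities) of the multisets for the $M_i$. The clean way is to use the exterior-power characterisation recalled just before Theorem~\ref{thm:Oseledets}: $\lambda_1 + \dots + \lambda_p$ is the Kingman limit of $\tfrac1n \log \norm{\Lambda^p M^n(x)}$. Apply the Oseledets decomposition bundle-by-bundle: on a full-measure invariant set, each $E_i(x)$ further decomposes into its own Oseledets subspaces $E_{i,j}(x)$ on which vectors grow exactly like $e^{n\lambda^{(i)}_j}$ in both time directions. Taking the direct sum over all $(i,j)$ gives an invariant measurable splitting of $E(x)$ into lines-or-blocks with prescribed growth rates, and by uniqueness of the Oseledets decomposition of $M$ this must coincide with $\bigoplus E_k(x)$; hence the exponents match as multisets. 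Alternatively, and perhaps more transparently, one estimates $\norm{\Lambda^p M^n(x)}$ directly: choosing a basis adapted to the splitting $E = \bigoplus E_i$, the matrix $\Lambda^p M^n$ is block-structured, and its norm is comparable (up to a factor depending only on $d$ and the angles between the $E_i$, which are continuous hence bounded on a compact base, or bounded measurably and used within Kingman) to $\max \prod_i \norm{\Lambda^{p_i} M_i^n(x)}$ over all decompositions $p = \sum_i p_i$ with $0 \leq p_i \leq d_i$; dividing by $n$ and letting $n \to \infty$, the $\max$ and the products pass to the limit, giving that the top-$p$ partial sum for $M$ is the maximum over such decompositions of $\sum_i (\text{top-}p_i\text{ partial sum for } M_i)$. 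This is exactly the statement that the sorted exponent sequence of $M$ is the sorted merge of those of the $M_i$.

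The main obstacle is the bookkeeping in the second step: making rigorous that the norm of a block matrix in exterior powers is comparable to the stated maximum of products, uniformly enough that the comparison constant is subexponential in $n$ (so it washes out after dividing by $n$). The angle between the invariant subbundles $E_i(x)$ need not be bounded away from zero if the bundle is merely measurable, so one cannot blindly invoke compactness; instead one should either restrict to a Pesin-type set where angles are controlled and use subadditivity to conclude on the whole space, or — cleanest — avoid the issue entirely by working with the uniqueness of the Oseledets decomposition: the abstractly-constructed splitting $\bigoplus_{i,j} E_{i,j}(x)$ has all the defining properties (invariance, the growth dichotomy in Theorem~\ref{thm:Oseledets}(1)), so it \emph{is} the Oseledets decomposition of $M$, and reading off dimensions and rates finishes the proof. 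I would present the uniqueness argument as the main line and mention the exterior-power computation as a remark, since it is the route that generalises to the quantitative statements needed later.
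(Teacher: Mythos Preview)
Your proposal is correct and matches the paper's approach exactly: the paper's proof is the single sentence ``follows directly from Oseledets theorem, by considering the Oseledets decomposition in each subbundle,'' and your main line via uniqueness of the Oseledets decomposition (the splitting $\bigoplus_{i,j} E_{i,j}(x)$ satisfies the defining growth dichotomy of Theorem~\ref{thm:Oseledets}(1), hence coincides with the Oseledets decomposition of $M$) is precisely the intended argument. Your discussion of the exterior-power alternative and the angle issue is sound but unnecessary here, as you correctly note.
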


The same holds if $M$, instead of leaving each $E_i$ invariant, is upper
triangular. While this is well known, we give a full proof as this is not as
trivial as one might think.
\begin{lem}
\label{lem:flag_split} Consider a log-integrable cocycle $M$ on a normed
vector bundle $E$, over an ergodic probability preserving dynamical system
$T$. Assume that there is a measurable invariant flag decomposition $\{0\} =
F_0(x) \subseteq F_1(x) \subseteq \dotsb \subseteq F_k(x) = E(x)$. Then the
Lyapunov spectrum of $M$ on $E$ is the union of the Lyapunov spectra of $M$
on each $F_i/F_{i-1}$, with multiplicities.
\end{lem}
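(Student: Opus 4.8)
The plan is to reduce Lemma~\ref{lem:flag_split} to Lemma~\ref{lem:direct_split} by replacing the measurable invariant flag with a measurable invariant \emph{splitting} for which the cocycle becomes block upper-triangular, and then to run an induction on the number $k$ of steps in the flag. The one honest subtlety, flagged by the authors' remark that this ``is not as trivial as one might think'', is that an invariant flag need not admit an invariant complementary splitting in a way that is compatible with the Lyapunov structure; what one can do instead is work directly with the quotient cocycles and compare singular values.

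Here is how I would carry it out. First, reduce to $k=2$: if $\{0\}=F_0 \subseteq F_1 \subseteq \dotsb \subseteq F_k = E$ is an invariant measurable flag, then applying the $k=2$ case to $F_{k-1} \subseteq E = F_k$ gives that the Lyapunov spectrum of $M$ on $E$ is the union (with multiplicities) of that of $M$ on $F_{k-1}$ and that of the induced cocycle on $E/F_{k-1}$; then $F_0 \subseteq \dotsb \subseteq F_{k-1}$ is an invariant flag in the bundle $F_{k-1}$, and one concludes by induction on $k$. So it suffices to treat a single invariant subbundle $F = F_1$ with quotient $E/F$.

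For the $k=2$ step, choose a measurable invariant complement: since $F(x)$ is a measurable family of subspaces, the orthogonal complement $G(x) = F(x)^\perp$ (with respect to the continuously varying Euclidean norm on $E$) is measurable, and $E(x) = F(x) \oplus G(x)$, though $G$ is \emph{not} $M$-invariant in general. In the induced (measurable, norm-bounded) trivialization given by this splitting, $M(x)$ becomes block upper-triangular,
\begin{equation*}
  M(x) = \begin{pmatrix} A(x) & B(x) \\ 0 & D(x) \end{pmatrix},
\end{equation*}
where $A$ is the restriction of $M$ to $F$, and $D$ is conjugate to the cocycle induced on $E/F$ (the identification $G(x) \cong E(x)/F(x)$ is a measurable isometry, so the Lyapunov exponents of $D$ equal those of the quotient cocycle). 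The key computation is then to compare the singular values of $M^n(x)$ with those of the block-diagonal part $\mathrm{diag}(A^n(x), D^n(x))$. For the sums of the top $i$ Lyapunov exponents, one uses $\norm{\Lambda^i M^n(x)}$: on one hand, $\Lambda^i$ of an upper-triangular matrix still has a block structure that makes $\norm{\Lambda^i M^n(x)} \geq \norm{\Lambda^i A^n(x)}$ and (via the induced map on the quotient $\Lambda^i$) produces, in the limit, exactly the merged-and-sorted list of exponents of $A$ and $D$; on the other hand, for the upper bound one needs $\tfrac1n\log\norm{B_n}$ to not contribute extra exponents, where $B_n$ is the off-diagonal block of $M^n(x)$. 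Since $B_n = \sum_{j=0}^{n-1} A^{n-1-j}(T^{j+1}x)\, B(T^j x)\, D^j(x)$, a log-integrability estimate combined with Kingman/subadditivity shows $\limsup \tfrac1n \log^+\norm{B_n} \leq \max(\lambda_1(A),\lambda_1(D))$, so the off-diagonal block cannot create a Lyapunov exponent larger than those already present; a symmetric argument controls the small singular values via $M^{-n}$. Feeding this into Oseledets (applied to $M$, to $A$ on $F$, and to $D$ on $E/F$) identifies the spectra.

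The main obstacle — and the reason the authors warn that it is not trivial — is precisely the control of the off-diagonal contribution: one must rule out that the interaction between the $A$-block and the $D$-block, accumulated over $n$ steps, inflates the growth and changes the spectrum (this is the cocycle analogue of the fact that a Jordan block has its eigenvalue repeated, but here the ``nilpotent'' interaction could a priori grow subexponentially \emph{and} that is all that is needed — the point is to show it is only subexponential \emph{relative to the larger of the two blocks}). The clean way around it is to phrase everything in terms of $\Lambda^i$ and singular values rather than trying to build an invariant complement: the exterior-power norms $\norm{\Lambda^i M^n(x)}$ depend only on $M$ itself, their Kingman limits are the partial sums of Lyapunov exponents, and one only has to check the two inequalities $\lambda_1+\dots+\lambda_i(M) \geq$ (merged partial sum) and $\leq$ (merged partial sum), each of which reduces to an elementary linear-algebra bound on exterior powers of upper-triangular matrices together with the off-diagonal estimate above.
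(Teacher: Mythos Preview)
Your route is genuinely different from the paper's, and the place where it breaks is precisely the off-diagonal estimate you flag as ``the main obstacle''.

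The paper does not try to bound $\norm{B_n}$ against $\max(\lambda_1(A),\lambda_1(D))$ directly. Instead it first refines the flag so that every successive quotient has a \emph{single} Lyapunov exponent (using Oseledets on each $F_i/F_{i-1}$), and then treats the $k=2$ case only under the extra hypothesis that $A$ has a unique exponent $\lambda$ and $D$ a unique exponent $\mu$. With that hypothesis the argument splits: if $\lambda\neq\mu$ one writes down an explicit convergent series $\Phi(x)=-\sum_{k\ge0}A^{k+1}(x)^{-1}B(T^kx)D^k(x)$ (or its backward analogue) whose graph is an $M$-invariant complement to $F$, reducing to Lemma~\ref{lem:direct_split}; if $\lambda=\mu$ one bounds $\norm{M^n}$ term by term using $\norm{A^{n-k}(T^kx)}\le\norm{A^n(x)}\norm{A^k(x)^{-1}}\le Ce^{\lambda(n-k)+\epsilon n}$.

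Your sketch skips the single-exponent reduction and asserts that ``log-integrability combined with Kingman/subadditivity'' yields $\limsup\frac1n\log\norm{B_n}\le\max(\lambda_1(A),\lambda_1(D))$. But the only handle you have on the summand $A^{n-1-j}(T^{j+1}x)$ at the shifted point is the cocycle identity $A^{n-1-j}(T^{j+1}x)=A^n(x)A^{j+1}(x)^{-1}$, which gives
\[
\norm{A^{n-1-j}(T^{j+1}x)}\le\norm{A^n(x)}\,\norm{A^{j+1}(x)^{-1}}\le C(x)\,e^{(\lambda_1(A)+\epsilon)n}\,e^{(-\lambda_{\min}(A)+\epsilon)j}.
\]
Summing against $\norm{D^j(x)}\le C(x)e^{(\lambda_1(D)+\epsilon)j}$ yields only
\[
\limsup\frac1n\log\norm{B_n}\le \lambda_1(A)+\max\bigl(0,\ \lambda_1(D)-\lambda_{\min}(A)\bigr),
\]
which overshoots $\max(\lambda_1(A),\lambda_1(D))$ whenever $A$ has more than one exponent and $\lambda_1(D)>\lambda_{\min}(A)$. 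The bound you want is of course true (it is equivalent to $\lambda_1(M)\le\max(\lambda_1(A),\lambda_1(D))$, a consequence of the lemma), but it is not what the term-by-term estimate delivers. The paper's preliminary reduction to single-exponent blocks is exactly what makes $\lambda_1(A)=\lambda_{\min}(A)$ and rescues this computation; without it you need a different mechanism (e.g.\ arguing via the backward orbit on the quotient, or introducing Lyapunov norms), none of which appears in your sketch.
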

Equivalently, considering $E_i$ a complementary subspace to $F_{i-1}$ in
$F_i$, then the matrix representation of $M$ in the decomposition $E=E_1
\oplus \dotsb \oplus E_k$ is upper triangular, and the lemma asserts that the
Lyapunov spectrum of $M$ is the union of the Lyapunov spectra of the diagonal
blocks.
\begin{proof}
Passing to the natural extension if necessary, we can assume that $T$ is
invertible.

Let us first assume that $k=2$, and that there is only one Lyapunov exponent
$\lambda$ in $E_1$ and one Lyapunov exponent $\mu$ in $E_2$, both with some
multiplicity. In matrix form, $M$ can be written as
$\left(\begin{smallmatrix} A_1 & B \\ 0 & A_2 \end{smallmatrix}\right)$,
where the growth rate of $A_1^n$ and $A_2^n$ are respectively given by
$e^{\lambda n}$ and $e^{\mu n}$. Then
\begin{equation}
\label{eq:Mn}
   M^n(x) = \left(\begin{array}{cc} A_1^n(x) & \sum_{k=1}^n A_1^{n-k}(T^k x) B(T^{k-1}x) A_2^{k-1}(x)
   \\ 0 & A_2^n(x) \end{array} \right).
\end{equation}
As $M$ is a log-integrable cocycle, $\log \norm{M(T^n x)}/n$ tends almost
surely to $0$ by Birkhoff theorem. Hence, the growth of $\norm{B(T^n x)}$ is
subexponential almost surely.

Assume first $\lambda > \mu$. Define a function $\Phi(x) : E_2(x) \to E_1(x)$
by
\begin{equation*}
  \Phi(x) = - \sum_{k=0}^\infty A_1^{k+1}(x)^{-1} B(T^k x) A_2^k(x).
\end{equation*}
The series converges almost surely as $\norm{A_1^{k+1}(x)^{-1} B(T^k x)
A_2^k(x)} \leq C  e^{(\mu-\lambda)k + \epsilon k}$ and $\mu-\lambda < 0$.
This series is designed so that $A_1(x) \Phi(x) + B(x) = \Phi(Tx) A_2(x)$,
i.e., so that the subspace $\tilde E_2(x) = \{ (\Phi(x)v, v) \st v \in
E_2(x)\}$ is invariant under $M$. We have obtained a decomposition $E= E_1
\oplus \tilde E_2$, on which the cocycle acts respectively like $A_1$ and
$A_2$. Hence, the result follows from Lemma~\ref{lem:direct_split}.

Assume now $\lambda<\mu$. Then one can solve again the equation $A_1(x)
\Phi(x) + B(x) = \Phi(Tx) A_2(x)$, this time going towards the past, by the
converging series
\begin{equation*}
  \Phi(x) = \sum_{k=0}^\infty A_1^{-k}(x)^{-1} B(T^{-k}x) A_2^{-k-1}(x).
\end{equation*}
Then, one concludes as above.

Finally, assume $\lambda=\mu$. For any typical $x$, any $n$ and any $k\leq
n$, we have
\begin{align*}
  \norm{A_1^{n-k}(T^k x)} &
  = \norm{A_1^n(x) A_1^k(x)^{-1}} \leq \norm{A_1^n(x)} \norm{A_1^k(x)^{-1}}
  \\&
  \leq C e^{\lambda n + \epsilon n/4} \cdot e^{-\lambda k + \epsilon k/4}
  \leq C e^{\lambda(n-k) + \epsilon n/2}.
\end{align*}
Hence, one deduces from the expression~\eqref{eq:Mn} of $M^n(x)$ that its
norm grows at most like $n e^{n\lambda+n\epsilon}$ almost surely. Hence, all
its Lyapunov exponents are $\leq \lambda$. The same argument applied to the
inverse cocycle, for $T^{-1}$, shows that all the Lyapunov exponents are also
$\geq \lambda$, concluding the proof in this case.

\medskip

We turn to the general case. Subdividing further each $F_i/F_{i-1}$ into the
sum of its Oseledets subspaces, we may assume that there is one single
Lyapunov exponent in each $F_i/F_{i-1}$. Then, we argue by induction over
$k$. At step $k$, the induction assumption ensures that the Lyapunov spectrum
$L_2$ of $M$ in $E/F_1$ is the union of the Lyapunov spectra in the
$F_i/F_{i-1}$ for $i>1$. Denoting by $L_1$ the Lyapunov spectrum in $F_1$
(made of a single eigenvalue $\lambda$ with some multiplicity), we want to
show that the whole Lyapunov spectrum is $L_1 \cup L_2$, with multiplicities.
Using the Oseledets theorem in $E/F_1$ and lifting the corresponding bundles
to $E$, we obtain subbundles $G_2,\dotsc, G_I$ such that, in the
decomposition $E=F_1 \oplus G_2 \oplus \dotsb \oplus G_I$, the matrix $M$ is
block diagonal, except possibly for additional blocks along the first line.
Each block $G_i$ in which the Lyapunov exponent is not $\lambda$ can be
replaced by a block $\tilde G_i$ which is really invariant under the
dynamics, as in the $k=2$ case above. We are left with $F_1$ and possibly one
single additional block, say $G_i$, with the same exponent $\lambda$. The
$k=2$ case again shows that all the Lyapunov exponents in $F_1 \oplus G_i$
are equal to $\lambda$, concluding the proof.
\end{proof}

\section{Exponential large deviations for norms of linear cocycles}

\subsection{Gibbs measures}
\label{subsec:Gibbs}

In this section, we recall basic properties of Gibbs measures, as explained
for instance in~\cite{bowen} and~\cite{parry-pollicott}. By \emph{Gibbs
measure}, we always mean in this article Gibbs measure with respect to some
Hölder continuous potential.

Let $\phi$ be a Hölder-continuous function, over a transitive subshift of
finite type $T:\Sigma \to \Sigma$. The \emph{Gibbs measure} associated to
$\phi$, denoted by $\mu_\phi$, is the unique $T$-invariant probability
measure for which there exist two constants $P$ (the \emph{pressure} of
$\phi$) and $C>0$ such that, for any cylinder $[a_0,\dotsc, a_{n-1}]$, and
for any point $x$ in this cylinder,
\begin{equation}
\label{eq:Gibbs}
  C^{-1} \leq \frac{\mu_\phi[a_0,\dotsc, a_{n-1}]}{e^{S_n \phi(x) - nP}} \leq C.
\end{equation}
The Gibbs measure only depends on $\phi$ up to the addition of a coboundary
and a constant, i.e., $\mu_{\phi} = \mu_{\phi + g-g\circ T + c}$.

Here is an efficient way to construct the Gibbs measure. Any Hölder
continuous function is cohomologous to a Hölder continuous function which
only depends on positive coordinates of points in $\Sigma$. Without loss of
generality, we can assume that this is the case of $\phi$, and also that
$P(\phi) = 0$. Denote by $T_+: \Sigma_+ \to \Sigma_+$ the unilateral subshift
corresponding to $T$. Define the transfer operator $\boL_\phi$, acting on the
space $C^\alpha$ of Hölder continuous functions on $\Sigma_+$ by
\begin{equation*}
  \boL_\phi u(x_+) = \sum_{T_+ y_+ = x_+} e^{\phi(y_+)} v(y_+).
\end{equation*}
Then one shows that this operator has a simple eigenvalue $1$ at $1$,
finitely many eigenvalues of modulus $1$ different from $1$ (they only exist
if $T$ is transitive but not mixing) and the rest of its spectrum is
contained in a disk of radius $<1$. One deduces that, for any $v \in
C^\alpha$, then in $C^\alpha$ one has $\frac{1}{N} \sum_{n=0}^{N-1}
\boL_\phi^n u \to \mu^+(v) v_0$, where $v_0$ is a (positive) eigenfunction
corresponding to the eigenvalue $1$, and $\mu^+$ is a linear form on
$C^\alpha$. One can normalize them by $\mu^+(1) = 1$. By approximation, it
follows that this convergence also holds in $C^0$ for $v\in C^0$. Moreover,
$\mu^+$ extends to a continuous linear form on $C^0$, i.e., it is a
probability measure.

Replacing $\phi$ with $\phi + \log v_0 -\log v_0 \circ T_+$, one replaces the
operator $\boL_\phi$ (with eigenfunction $v_0$) with the operator $\boL_{\phi
+ \log v_0 - \log v_0 \circ T_+}$, with eigenfunction $1$. Hence, without
loss of generality, we can assume that $v_0 = 1$. With this normalization,
one checks that the measure $\mu^+$ is $T_+$-invariant. It is the Gibbs
measure for $T_+$, satisfying the property~\eqref{eq:Gibbs}. Its natural
$T$-invariant extension $\mu$ to $\Sigma$ is the Gibbs measure for $T$. We
have for any $v \in C^0(\Sigma_+)$
\begin{equation}
\label{eq:conv_boL}
  \frac{1}{N} \sum_{n=0}^{N-1} \boL^n v (x_+) \to \int v \dd \mu^+,
     \quad \text{uniformly in $x_+ \in \Sigma_+$.}
\end{equation}

It follows from the construction above that the jacobian of $\mu^+$  with
respect to $T_+$ is given by $J(x_+) = \frac{\dd T^* \mu_+}{\mu_+}(x) =
e^{-\phi(x_+)}$.

Consider the disintegration of $\mu$ with respect to the factor $\mu^+$:
there exists a family of measures $\mu^-_{x_+}$ on $W_{\loc}^s(x_+)$ for $x_+
\in \Sigma_+$, such that $\mu = \int \mu^-_{x_+} \dd\mu_+(x_+)$. Formally, we
write $\mu = \mu_+ \otimes \mu_{x_+}^-$, even though this is not a direct
product. These measures can in fact be defined for all $x_+$ (instead of
almost all $x_+$) in a canonical way, they depend continuously on $x_+$, they
belong to the same measure class when the first coordinate $(x_+)_0$ is
fixed, and moreover their respective Radon-Nikodym derivatives are continuous
in all variables. See for instance~\cite[Section A.2]{avila_viana_criterion}.

Geometrically, the picture is the following. Consider some point $x_+ \in
\Sigma_+$. It has finitely many preimages $y_+^1,\dotsc, y_+^i$ under $T_+$.
Then $W_{\loc}^s(x_+) = \bigcup_i T(W_{\loc}^s(y_+^i))$, and
\begin{equation}
\label{eq:T-inv}
  \mu_{x_+}^- = \sum_i \frac{1}{J(y_+^i)} T_* \mu_{y_+^i}^- = \sum_i e^{\phi(y_+^i)} T_* \mu_{y_+^i}^-.
\end{equation}

\subsection{First easy bounds}

In this paragraph, we prove (1-3) in Theorem~\ref{thm:large_deviations}.

\begin{lem}
\label{lem:anx_bound_Birkhoff} Let $a_n(x)=a(n, x)$ be a subadditive cocycle
which is bounded in absolute value for any $n$. Then, for any $N$, there
exists $C>0$ with
\begin{equation*}
a(n, x) \leq S_n (a_N/N) (x) + C,
\end{equation*}
for any $n\in \N$ and any $x \in  \Sigma$.
\end{lem}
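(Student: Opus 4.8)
The plan is to use the standard subadditivity trick that converts a subadditive cocycle into a Birkhoff sum of the "time-$N$" function $a_N$, up to a bounded error coming from the incomplete blocks at the ends. Fix $N \in \N$. Given $n \in \N$, write $n = qN + r$ with $0 \leq r < N$. Iterating subadditivity $a(j+k, x) \leq a(j, x) + a(k, T^j x)$ along the orbit, I would first break $a(n,x)$ into $q$ blocks of length $N$ plus one block of length $r$:
\begin{equation*}
  a(n, x) \leq \sum_{\ell=0}^{q-1} a(N, T^{\ell N} x) + a(r, T^{qN} x).
\end{equation*}
The last term is bounded in absolute value by $\sup_{0 \leq s < N} \sup_y \abs{a(s,y)} =: C_1 < \infty$, using the hypothesis that $a(s, \cdot)$ is bounded for each fixed $s$ (and there are only finitely many values $s < N$).

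The next step is to compare $\sum_{\ell=0}^{q-1} a(N, T^{\ell N} x)$ with the genuine Birkhoff sum $S_n(a_N/N)(x) = \frac1N \sum_{j=0}^{n-1} a(N, T^j x)$. For each fixed $\ell$, again by subadditivity applied along the $N$ consecutive iterates $T^{\ell N} x, T^{\ell N + 1} x, \dotsc$, one has for every $j$ with $\ell N \leq j < \ell N + N$ (splitting $[0,N)$ at position $j - \ell N$)
\begin{equation*}
  a(N, T^{\ell N} x) \leq a(j - \ell N, T^{\ell N} x) + a(N - (j - \ell N), T^{j} x) \leq 2 C_1,
\end{equation*}
hmm, that bounds $a(N, \cdot)$ but doesn't directly relate it to $a(N, T^j x)$. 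The cleaner route: for any $j$ and any $\ell$ with $\abs{j - \ell N} < N$, subadditivity gives $a(N, T^{\ell N} x) \leq a(j - \ell N, T^{\ell N}x) + a(N - j + \ell N, T^j x)$ and also $a(N, T^j x) \leq a(\ell N - j, T^j x) + a(N - \ell N + j, T^{\ell N} x)$ (for the relevant sign of $j - \ell N$, using one of these). Combining, $a(N, T^{\ell N} x) \leq a(N, T^j x) + 2C_1 + C_1 \leq a(N, T^j x) + 3C_1$ — more simply, $\abs{a(N, T^{\ell N}x) - a(N, T^j x)} \leq 2 C_1$ whenever $\abs{\ell N - j} < N$. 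Therefore, averaging over the $N$ values $j \in [\ell N, \ell N + N)$,
\begin{equation*}
  a(N, T^{\ell N} x) \leq \frac1N \sum_{j=\ell N}^{\ell N + N - 1} a(N, T^j x) + 2 C_1.
\end{equation*}
Summing over $\ell = 0, \dotsc, q-1$ and noting $\{0, \dotsc, qN - 1\} \subseteq \{0, \dotsc, n-1\}$, the sum $\sum_{\ell} \frac1N \sum_{j} a(N, T^j x)$ is at most $S_n(a_N/N)(x) + \frac1N \cdot r \cdot C_1$ (adding back the missing indices $j \in [qN, n)$, at most $N$ of them, each $\geq -C_1$). Altogether $a(n,x) \leq S_n(a_N/N)(x) + (2q C_1 / q?) \dotsb$ — I should be careful: the $2C_1$ error is incurred once per block, i.e. $q$ times, so this gives $a(n,x) \leq S_n(a_N/N)(x) + 2 q C_1 + \dotsb$, which is linear in $n$, not a constant. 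That is wrong.

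So the block-comparison must be done more efficiently. The correct classical argument averages over the \emph{starting point} rather than comparing block by block: set $b_N(x) = a(N,x)$ and observe that for the shifted decompositions starting at $T x, T^2 x, \dotsc$, one gets $q$ slightly different block sums, and averaging these $N$ shifted decompositions of $a(n, \cdot)$ (all of comparable length) against each other telescopes the errors. Concretely, for each $i \in \{0, \dotsc, N-1\}$, subadditivity gives $a(n, x) \leq a(i, x) + \sum_{\ell} a(N, T^{i + \ell N} x) + a(\text{remainder}, \cdot) \leq \sum_{\ell: 0 \leq i + \ell N < n} a(N, T^{i+\ell N} x) + 2C_1$. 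Now average over $i \in \{0, \dotsc, N-1\}$: the double sum $\frac1N \sum_{i=0}^{N-1} \sum_{\ell} a(N, T^{i + \ell N}x)$ ranges, as $(i, \ell)$ varies, over indices $i + \ell N \in [0, n)$ hitting each such index exactly once, so it equals exactly $\frac1N \sum_{j=0}^{n-1} a(N, T^j x) = S_n(a_N/N)(x)$, up to at most $N$ boundary indices near $n$ each contributing $\geq -C_1/N$. Hence $a(n,x) \leq S_n(a_N/N)(x) + 2C_1 + C_1 = 3 C_1 =: C$, a constant depending only on $N$. This is the key idea and the step most prone to off-by-block-length errors; once the averaging-over-$i$ trick is set up correctly, the rest is bookkeeping of the finitely many boundary terms, all controlled by $C_1 = \sup_{s < N}\sup_y \abs{a(s,y)} < \infty$.
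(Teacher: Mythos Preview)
Your final argument—averaging the shifted block decompositions over the starting point $i\in\{0,\dotsc,N-1\}$ so that the double sum collapses to $S_n(a_N/N)(x)$ up to $O(1)$ boundary terms—is correct and is exactly the approach taken in the paper. The only slip is that your constant $C_1=\sup_{0\leq s<N}\sup_y\abs{a(s,y)}$ does not literally bound $\abs{a(N,\cdot)}$, which you need for the boundary indices; just include $s=N$ in the supremum (this is still finite by hypothesis) and the bookkeeping goes through.
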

\begin{proof}
This is clear for $n \leq 2N$ as all those quantities are bounded. Consider
now $n \geq 2N$, consider $p$ such that $n=Np+r$ with $r\in [N, 2N]$. For any
$j\in [0,N-1]$, one may write $n=j+Np+r$ with $r\in [0,2N]$. Thus,
\begin{equation*}
  a(n,  x) \leq a(j,  x) + \sum_{i=0}^{p-1} a(N,  T^{iN+j}  x) + a (r, T^{pN+j} x)
  \leq C + \sum_{i=0}^{p-1} N (a_N/N) (T^{iN+j}  x).
\end{equation*}
Summing over $j\in [0,N-1]$, we get
\begin{align*}
  N a(n, x) & \leq NC + \sum_{j=0}^{N-1} \sum_{i=0}^{p-1} N (a_N/N)(T^{iN+j} x)
  = NC + N S_{Np} (a_N/N)(x)
  \\& \leq C' + N S_n (a_N/N)(x).
\end{align*}
This proves the claim.
\end{proof}

\begin{lem}
\label{lem:upper_anx} Let $(T,\mu)$ be a transitive subshift of finite type
with a Gibbs measure, and $a(n,x)$ a subadditive cocycle above $T$ such that
$a(n,\cdot)$ is continuous for all $n$. Let $\lambda$ be the almost sure
limit of $a(n,x)/n$, assume $\lambda>-\infty$. Then, for any $\epsilon>0$,
there exists $C>0$ such that, for all $n\geq 0$,
\begin{equation*}
  \mu\{ x \st a(n,x) \geq n \lambda + n \epsilon\} \leq C e^{-C^{-1}n}.
\end{equation*}
\end{lem}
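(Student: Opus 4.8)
The plan is to reduce the statement to the classical exponential large deviations upper bound for Birkhoff sums of a continuous function, via the combinatorial comparison of Lemma~\ref{lem:anx_bound_Birkhoff}. Since $\Sigma$ is compact and each $a(n,\cdot)$ is continuous, the cocycle is bounded for every fixed $n$, so Lemma~\ref{lem:anx_bound_Birkhoff} applies. The sequence $\int a_n \dd\mu$ is subadditive, so by Kingman's subadditive ergodic theorem $\tfrac1N\int a_N\dd\mu$ converges to the (finite, by assumption) almost sure limit $\lambda$; I would fix $N$ with $\tfrac1N\int a_N\dd\mu < \lambda + \epsilon/2$ and set $g = a_N/N$, a continuous function with $\int g\dd\mu < \lambda + \epsilon/2$ for which Lemma~\ref{lem:anx_bound_Birkhoff} provides a constant $C_N$ with $a(n,x) \leq S_n g(x) + C_N$ for all $n$ and $x$. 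Then $a(n,x) \geq n\lambda + n\epsilon$ forces $S_n g(x) - n\int g\dd\mu > n\epsilon/2 - C_N$, which exceeds $n\epsilon/4$ as soon as $n \geq 4C_N/\epsilon$; for the finitely many smaller $n$ the conclusion holds trivially after enlarging the constant. So it suffices to show that for a continuous $g$ and every $\eta>0$ there is $C>0$ with $\mu\{x \st S_n g(x) - n\int g\dd\mu \geq n\eta\} \leq Ce^{-C^{-1}n}$ for all $n$.

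For this I would first reduce to a Hölder observable: choosing a Hölder (for instance locally constant) function $\tilde g$ with $\norm{g-\tilde g}_\infty \leq \eta/4$, the event $\{S_n g - n\int g\dd\mu \geq n\eta\}$ is contained in $\{S_n \tilde g - n\int \tilde g\dd\mu \geq n\eta/2\}$. On this last event I would run a Chernoff estimate: for $t>0$,
\begin{equation*}
  \mu\{x \st S_n \tilde g(x) \geq n\int \tilde g\dd\mu + n\eta/2\}
   \leq e^{-tn(\int \tilde g\dd\mu + \eta/2)} \int e^{t S_n \tilde g}\dd\mu .
\end{equation*}
Using the Gibbs bound~\eqref{eq:Gibbs} for $\mu = \mu_\phi$ together with the bounded distortion of Birkhoff sums of Hölder functions, one gets $\int e^{t S_n \tilde g}\dd\mu \leq C e^{n(P(\phi + t\tilde g) - P(\phi))}$, where $P$ is the topological pressure (see~\cite{bowen, parry-pollicott}). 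Since $t\mapsto P(\phi + t\tilde g)$ is differentiable at $0$ with derivative $\int \tilde g\dd\mu$, one has $P(\phi + t\tilde g) - P(\phi) = t\int \tilde g\dd\mu + o(t)$, so the exponential rate above is at most $-t\eta/2 + o(t)$, which is negative once $t$ is small; fixing such a $t$ and taking $n$ large absorbs the subexponential prefactor and yields the claim.

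The only genuinely non-routine point is that $g=a_N/N$ is merely continuous, not Hölder, so the transfer-operator and pressure machinery does not apply to it directly — this is exactly what the uniform approximation by $\tilde g$ is for, the upper large-deviations bound being stable under uniform perturbations of the observable. Everything else (the combinatorial Lemma~\ref{lem:anx_bound_Birkhoff} and the Chernoff/pressure estimate) is standard. Alternatively, one could quote the level-$2$ Donsker–Varadhan large deviations principle for Gibbs measures on transitive subshifts of finite type, whose rate function vanishes only at $\mu$, and contract it along $\nu \mapsto \int g\dd\nu$; I would prefer the elementary argument above since it is self-contained given the material of Section~\ref{subsec:Gibbs}.
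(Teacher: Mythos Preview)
Your argument is correct and follows essentially the same route as the paper: reduce via Lemma~\ref{lem:anx_bound_Birkhoff} to a Birkhoff sum of the continuous function $a_N/N$, then invoke exponential large deviations for continuous observables. The paper simply quotes the latter as a known fact (noting in a footnote the reduction to H\"older observables by uniform approximation and to the mixing case by passing to an iterate), whereas you spell out the Chernoff/pressure computation; this is extra detail rather than a different strategy.
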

\begin{proof}
By Kingman's theorem, $a(n, x)/n$ converges to $\lambda$ almost everywhere
and in $L^1$. Thus, one can take $N$ such that $\int a_N/N \dd\mu(x) \leq
(\lambda + \epsilon/2) N$. From the previous lemma, we obtain a constant $C$
such that, for all $n$ and $x$,
\begin{equation*}
  a(n, x) \leq S_n(a_N/N)  x + C.
\end{equation*}
Thus,
\begin{equation*}
  \{ x \st a(n, x) \geq n \lambda + n \epsilon\}
  \subseteq \{ x \st S_n(a_N/N)  x \geq n \int(a_N/N) + n\epsilon/2-C\}.
\end{equation*}
By the large deviations inequality for continuous functions\footnote{This
holds for continuous functions in transitive subshifts of finite type, by
reduction to the mixing setting after taking a finite iterate of the map, and
by reduction to Hölder continuous functions by uniform approximation.}, this
set has exponentially small measure. This proves the lemma.
\end{proof}

\begin{prop}
\label{prop:upper} Let $(T,\mu)$ be a transitive subshift of finite type with
a Gibbs measure, and $M$ a continuous cocycle above $T$ with Lyapunov
exponents $\lambda_1\geq \dotsb \geq \lambda_d$. For any $\epsilon>0$, there
exists $C>0$ such that, for all $n\geq 0$ and all $i \leq d$,
\begin{equation*}
  \mu\{  x \st \log \norm{\Lambda^i M^n(x)}
     \geq n (\lambda_1+\dotsb+\lambda_i) + n \epsilon\} \leq C e^{-C^{-1}n}.
\end{equation*}
\end{prop}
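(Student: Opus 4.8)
The plan is to recognize Proposition~\ref{prop:upper} as a direct application of Lemma~\ref{lem:upper_anx} to the subadditive cocycles produced by exterior powers, followed by a maximum over the finitely many indices $i$.

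Fix $i\leq d$ and set $a(n,x)=\log\norm{\Lambda^i M^n(x)}$. First I would verify that $a$ satisfies the hypotheses of Lemma~\ref{lem:upper_anx}. Subadditivity: from $M^{n+m}(x)=M^n(T^m x)\,M^m(x)$ and the functoriality of the $i$-th exterior power, $\Lambda^i M^{n+m}(x)=(\Lambda^i M^n(T^m x))(\Lambda^i M^m(x))$, so $\norm{\Lambda^i M^{n+m}(x)}\leq \norm{\Lambda^i M^n(T^m x)}\norm{\Lambda^i M^m(x)}$, i.e.\ $a(n+m,x)\leq a(n,T^m x)+a(m,x)$; this is exactly the subadditive cocycle property already recalled in the preliminaries. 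Continuity of $a(n,\cdot)$ for each fixed $n$ is immediate since $M$ is continuous, and then $a(n,\cdot)$ is bounded because $\Sigma$ is compact. The almost sure limit of $a(n,x)/n$ is $\lambda_1+\dotsb+\lambda_i$ by the very definition of the Lyapunov exponents via Kingman's theorem applied to $\Lambda^i M^n$; moreover this limit is $>-\infty$, because $M$ and $M^{-1}$ are bounded on the compact space $\Sigma$, so that the cocycle is log-integrable and all the $\lambda_j$ are finite.

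Lemma~\ref{lem:upper_anx} then provides, for the given $\epsilon$, a constant $C_i>0$ such that
\begin{equation*}
  \mu\{x \st \log\norm{\Lambda^i M^n(x)}\geq n(\lambda_1+\dotsb+\lambda_i)+n\epsilon\}\leq C_i e^{-C_i^{-1}n}
\end{equation*}
for all $n\geq 0$. Taking $C=\max(C_1,\dotsc,C_d)$ (enlarging it if necessary so that $C\geq 1$) yields the statement uniformly over $i\leq d$, which gives the claim.

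There is no genuine obstacle here: all of the analytic content is packaged in Lemma~\ref{lem:upper_anx}, which itself reduces—via Lemma~\ref{lem:anx_bound_Birkhoff}—the large deviations for a subadditive cocycle to the classical large deviations for Birkhoff sums of the continuous function $a_N/N$ on a transitive subshift of finite type. The only thing to check for the present proposition is that $\log\norm{\Lambda^i M^n(\cdot)}$ is an instance of such a cocycle with finite growth rate, which is exactly what the preliminaries recorded.
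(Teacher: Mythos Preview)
Your proof is correct and follows exactly the paper's own approach: apply Lemma~\ref{lem:upper_anx} to the subadditive cocycle $a(n,x)=\log\norm{\Lambda^i M^n(x)}$. The paper's proof is a one-line invocation of that lemma, while you have spelled out the verification of its hypotheses and the final maximum over $i$; nothing differs in substance.
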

\begin{proof}
Fix $i\leq d$. Then the result follows from the previous lemma applied to
$a(n, x) = \log \norm{ \Lambda^i M^n(x)}$.
\end{proof}

This proposition shows one of the two directions in
Theorem~\ref{thm:large_deviations}, without any assumption on the cocycle.
Hence, to prove this theorem, it will suffice to prove the corresponding
lower bound
\begin{equation}
\label{eq:lower_bound}
  \mu \{ x \st \log \norm{ \Lambda^i M^n(x)}
    \leq n (\lambda_1+\dotsb+\lambda_i) - n \epsilon\} \leq C e^{-C^{-1}n},
\end{equation}
under the various possible assumptions of this theorem. As is usual with
subadditive ergodic theory, this lower bound is significantly harder than the
upper bound. Indeed, the analogue of Lemma~\ref{lem:upper_anx} for the lower
bound is false, see Proposition~\ref{prop:counter_subadd} in
Appendix~\ref{app:counter}

We already have enough tools to prove the easy cases of
Theorem~\ref{thm:large_deviations}.

\begin{proof}[Proof of Theorem~\ref{thm:large_deviations} (1-3)]
\label{proof:thm_1-3} First, we prove (1): assuming that
$\lambda_1=\dotsc=\lambda_d=\lambda$, we have to prove that
\begin{equation*}
\mu \{ x \st \log \norm{ \Lambda^i M^n(x)}
    \leq n i\lambda - n \epsilon\} \leq C e^{-C^{-1}n},
\end{equation*}
Let $s_i(x,n)$ be the $i$-th singular value of $M^n(x)$. Then
\begin{equation*}
  \norm{\Lambda^i M^n(x)} = s_1(x,n) \dotsm s_i(x,n) \geq s_d(x,n)^i =
\norm{M^n(x)^{-1}}^{-i}.
\end{equation*}
Hence, to conclude, it suffices to show that
\begin{equation*}
\mu \{ x \st \log \norm{M^n(x)^{-1}}
    \geq -n\lambda + n \epsilon\} \leq C e^{-C^{-1}n}.
\end{equation*}
This follows from Proposition~\ref{prop:upper} applied to the cocycle $\tilde
M(x) = (M(x)^{-1})^t$, whose Lyapunov exponents are all equal to $-\lambda$.

\medskip

Let us now prove (3), for $k=2$ as the general case then follows by induction
over $k$. Assume that $E_1$ is an invariant continuous subbundle such that,
on $E_1$ and on $E/E_1$, the induced cocycle has exponential large deviations
for all exponents. Denote by $L_1$ and $L_2$ the Lyapunov exponents of the
cocycle on these two bundles, then the Lyapunov spectrum on $E$ is $L_1 \cup
L_2$ with multiplicity, by Lemma~\ref{lem:flag_split}. Let $E_2$ be the
orthogonal complement to $E_1$. We want to show~\eqref{eq:lower_bound}, for
some $i$. In $\lambda_1,\dotsc,\lambda_i$, some of these exponents, say a
number $i_1$ of them, are the top exponents in $L_1$. Denote their sum by
$\Sigma_1$. The remaining $i_2 = i-i_1$ exponents are the top exponents in
$L_2$, and add up to a number $\Sigma_2$.

In the decomposition $E=E_1 \oplus E_2$, the matrix $M$ is block diagonal, of
the form $\left(\begin{smallmatrix} M_1 & B \\ 0 & M_2
\end{smallmatrix}\right)$. One has $\norm{\Lambda^i M(x)} \geq
\norm{\Lambda^{i_1} M_1(x)} \norm{\Lambda^{i_2} M_2(x)}$: considering $v_1$
and $v_2$ that are maximally expanded by $\Lambda^{i_1} M_1(x)$ and
$\Lambda^{i_2} M_2(x)$, the expansion factor of $\Lambda^i M(x)$ along $v_1
\wedge v_2$ is at least $\norm{\Lambda^{i_1} M_1(x)} \norm{\Lambda^{i_2}
M_2(x)}$ thanks to the orthogonality of $E_1$ and $E_2$, and the
block-diagonal form of $M(x)$. Therefore,
\begin{align*}
  \{ x \st \log & \norm{ \Lambda^i M^n(x)}
    \leq n (\lambda_1+\dotsb+\lambda_i) - n \epsilon\}
  \\&
  \subseteq \{ x \st \log \norm{\Lambda^{i_1} M_1^n(x)} + \log \norm{\Lambda^{i_2} M_2^n(x)}
    \leq n \Sigma_1 + n\Sigma_2 - n \epsilon\}
  \\&
  \subseteq \{ x \st \log \norm{\Lambda^{i_1} M_1^n(x)} \leq n \Sigma_1 - n \epsilon/2\}
  \cup \{ x \st \log \norm{\Lambda^{i_2} M_2^n(x)} \leq n \Sigma_2 - n \epsilon/2\}.
\end{align*}
The last sets both have an exponentially small measure, as we are assuming
that the induced cocycles on $E_1$ and $E/E_1$ have exponential large
deviations for all exponents. Hence, $\mu \{ x \st \log \norm{ \Lambda^i
M^n(x)} \leq n (\lambda_1+\dotsb+\lambda_i) - n \epsilon\}$ is also
exponentially small. This concludes the proof of (3).

\medskip

Finally, (2) follows from (3) by taking $F_i=E_1 \oplus \dotsb \oplus E_i$.
\end{proof}

\subsection{\texorpdfstring{$u$}{u}-states}

Consider a cocycle $M$ admitting invariant continuous holonomies. We define a
fibered dynamics over the projective bundle $\Pbb(E)$ by
\begin{equation*}
   T_{\Pbb} (x, [v]) = (Tx, [M(x) v]).
\end{equation*}
Let $\pi_{\Pbb(E) \to  \Sigma} : \Pbb(E) \to  \Sigma$ be the first
projection.

In general, $T_{\Pbb}$ admits many invariant measures which project under
$\pi_{\Pbb(E) \to  \Sigma}$ to a given Gibbs measure $\mu$. For instance, if
the Lyapunov spectrum of $M$ is simple, denote by $v_i(x)$ the vector in
$E(x)$ corresponding to the $i$-th Lyapunov exponent, then $\mu \otimes
\delta_{[v_i(x)]}$ is invariant under $T_{\Pbb}$. By this notation, we mean
the measure such that, for any continuous function $f$,
\begin{equation*}
  \int f(x,v) \dd(\mu \otimes \delta_{[v_i(x)]}) (x,v)
  = \int f(x, [v_i(x)]) \dd \mu(x).
\end{equation*}
More generally, if $m_{ x}$ is a family of measures on $\Pbb(E(x))$ depending
measurably on $x$ such that $M(x)_*  m_{ x} = m_{ T x}$, then the measure
$\mu \otimes  m_{ x}$ (defined as above) is invariant under $T_{\Pbb}$.
Conversely, any $T_{\Pbb}$-invariant measure that projects down to $\mu$ can
be written in this form, by Rokhlin's disintegration theorem.

To understand the growth of the norm of the cocycle, we need to distinguish
among those measures the one that corresponds to the maximal expansion, i.e.,
$\mu \otimes \delta_{[v_1]}$. This measure can be obtained as follows,
assuming that $\lambda_1$ is simple. Start from a measure on $\Pbb(E)$ that
is of the form $\mu \otimes \nu_{ x}$ where the measures $\nu_{ x}$ depend
continuously on $x$ and give zero mass to all hyperplanes. Then
\begin{equation*}
   (T_{\Pbb}^n)_* (\mu \otimes \nu_{ x}) =  \mu \otimes (M^n(T^{-n}  x)_* \nu_{ T^{-n}  x}).
\end{equation*}
By Oseledets theorem, the matrix $M^n(T^{-n} x)$ acts as a contraction on
$\Pbb(E(T^{-n} x))$, sending the complement of a neighborhood of some
hyperplane to a small neighborhood of $[v_1(x)]$. As $\nu_y$ gives a small
mass to the neighborhood of the hyperplane (uniformly in $y$), it follows
that $(M^n(T^{-n} x)_* \nu_{ T^{-n}  x})$ converges to $\delta_{[v_1(x)]}$.
Thus,
\begin{equation*}
  \mu \otimes \delta_{[v_1]} = \lim (T_{\Pbb}^n)_* (\mu \otimes \nu_{ x}).
\end{equation*}

There is a remarkable consequence of this construction. We can start from a
family of measure $\nu_{x}$ which is invariant under the unstable holonomy
$H^u_{x \to  y}$, i.e., such that $(H^u_{x \to y})_* \nu_{ x} = \nu_{y}$.
Then the same is true of all the iterates $(M^n(T^{-n}
 x)_* \nu_{T^{-n}  x})$. In the limit $n \to
\infty$, it follows that $\delta_{[v_1]}$ is also invariant under unstable
holonomies. (There is something to justify here, as it is not completely
straightforward that the holonomy invariance is invariant under weak
convergence: The simplest way is to work with a one-sided subshift, and then
lift things trivially to the two-sided subshift, see~\cite[Section
4.1]{avila_viana_criterion} for details). This remark leads us to the
following definition.

\begin{definition}
Consider a probability measure $\nu$ on $\Pbb(E)$ which projects to $\mu$
under $\pi$. It is called a $u$-state if, in the fiberwise decomposition
$\nu= \mu \otimes  \nu_{x}$, the measures $\nu_{x}$ are $\mu$-almost surely
invariant under unstable holonomies. It is called an invariant $u$-state if,
additionally, it is invariant under the dynamics.
\end{definition}

The invariant $u$-states can be described under an additional irreducibility
assumption of the cocycle, strong irreducibility.

\begin{definition}
\label{def:strongly_irreducible} We say that a cocycle $M$ with invariant
continuous holonomies over a subshift of finite type is \emph{not strongly
irreducible} if there exist a dimension $0<k<d=\dim E$, an integer $N>0$, and
for each point $x \in \Sigma$ a family of distinct $k$-dimensional vector
subspaces $V_1(x),\dotsc, V_N(x)$ of $E(x)$, depending continuously on $x$,
with the following properties:
\begin{itemize}
\item the family as a whole is invariant under $M$, i.e., for all $x$,
\begin{equation*} M(x)\{V_1(x),\dotsc,V_N(x)\} =
    \{V_1(T x),\dotsc, V_N(T x)\}.
\end{equation*}
\item the family as a whole is invariant under the holonomies, i.e., for
    all $x$ and all $y \in W_{\loc}^u(x)$ one has $H^u_{x \to
    y}\{V_1(x),\dotsc, V_N(x)\} = \{V_1(y),\dotsc, V_N(y)\}$, and the same
    holds for the stable holonomies.
\end{itemize}
Otherwise, we say that $M$ is strongly irreducible.
\end{definition}
In a locally constant cocycle, where holonomies commute (and can therefore be
taken to be the identity), then the holonomy invariance condition reduces to
the condition that each $V_i$ is locally constant, i.e., it only depends on
$x_0$.

The following theorem is the main result of this paragraph. It essentially
follows from the arguments in~\cite{bonatti_viana_lyapunov,
avila_viana_criterion}.

\begin{thm}
\label{thm:u-state-unique} Consider a transitive subshift of finite type $T$
with a Gibbs measure $\mu$. Let $M$ be a locally constant cocycle on a bundle
$E$ over $T$, which is strongly irreducible and has simple top Lyapunov
exponent. Then the corresponding fibered map $T_{\Pbb}$ has a unique
invariant $u$-state, given by $ \mu \otimes \delta_{[v_1]}$ where $v_1(x)$ is
a nonzero vector spanning  the $1$-dimensional Oseledets subspace for the top
Lyapunov exponent at $x$.
\end{thm}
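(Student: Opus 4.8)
The plan is to prove uniqueness of the invariant $u$-state by showing that any invariant $u$-state must be supported on the graph of $[v_1]$, following the classical argument of Bonatti--Viana and Avila--Viana. First I would reduce to a one-sided subshift. A $u$-state, by definition, has fiber measures $\nu_{x}$ invariant under unstable holonomies; for a locally constant cocycle the unstable holonomies are the identity, so $\nu_x$ depends only on the past coordinates $(x_n)_{n\leq 0}$ (more precisely, it is constant along local unstable sets). Combined with invariance under $T_{\Pbb}$, a standard argument (as in \cite[Section 4]{avila_viana_criterion}) shows that such a measure disintegrates nicely, and one can work with the transfer-operator picture on $\Sigma_+$: the family $x_+ \mapsto \nu_{x_+}$ must satisfy the equivariance $\mu_{x_+}^- = \sum_i e^{\phi(y_+^i)} T_* \mu_{y_+^i}^-$-type relation pushed to the projective bundle, i.e. $\nu_{x_+} = \sum_{T_+ y_+ = x_+} e^{\phi(y_+)} M(y_+)_* \nu_{y_+}$ after normalization.

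Next I would use the contraction argument already sketched in the excerpt: iterating $T_{\Pbb}$ and using Oseledets' theorem, for $\mu$-a.e.\ $x$ the matrices $M^n(T^{-n}x)$ act on $\Pbb(E(T^{-n}x))$ as strong contractions toward $[v_1(x)]$, away from a neighborhood of a hyperplane. If I knew a priori that $\nu_{T^{-n}x}$ gives uniformly small mass to neighborhoods of hyperplanes, I could conclude $\nu_x = \delta_{[v_1(x)]}$ directly. The point where strong irreducibility enters is precisely to rule out the alternative: a $u$-state could concentrate on a proper invariant subbundle. So the key step is to show that the fiber measures of \emph{any} invariant $u$-state $\nu$ give zero mass to every hyperplane (indeed to every proper projective subspace) $\mu$-a.e. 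Suppose not; then taking the smallest dimension $k$ and the maximal mass $c>0$ achieved on some $k$-plane, a measurable-selection / maximal-mass argument produces a finite family $V_1(x),\dots,V_N(x)$ of $k$-dimensional subspaces carrying the atoms, depending measurably on $x$, invariant as a family under $M$. Because $\nu$ is a $u$-state and (working one-sided) the fiber measures are locally constant, this family is also holonomy-invariant; a continuity/compactness argument upgrades measurable dependence to continuous dependence on $x$. This exactly contradicts strong irreducibility (Definition~\ref{def:strongly_irreducible}). Hence $\nu_x$ charges no hyperplane, and the contraction argument forces $\nu = \mu \otimes \delta_{[v_1]}$.

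Finally, existence of this $u$-state is already established in the discussion preceding the theorem (start from any holonomy-invariant continuous family $\nu_x$ giving no mass to hyperplanes, push forward, and pass to the limit), and one checks it is genuinely invariant and a $u$-state as explained there. So the theorem reduces to the uniqueness half. I expect the main obstacle to be the promotion of the ``bad'' family of subspaces from merely measurable to continuous dependence on $x$, and the careful bookkeeping needed to make the maximal-mass selection well-defined and $M$-equivariant; both are handled in \cite{bonatti_viana_lyapunov, avila_viana_criterion}, and the reduction to a one-sided subshift is what makes the holonomy-invariance of the selected family transparent (it becomes simply local constancy). Everything else — the Oseledets contraction estimate and the transfer-operator normalization — is routine given the preliminaries.
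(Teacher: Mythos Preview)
Your proposal is correct and follows essentially the same route as the paper: reduce to the one-sided shift $\Sigma_+$, establish continuity of the conditional measures $\nu^+_{x_+}$ (the paper's Lemma~\ref{lem:hatnu_nux}), use the maximal-mass argument to show these measures charge no hyperplane lest one contradict strong irreducibility (Lemma~\ref{lem:hyperplane_0}), and then conclude via Oseledets contraction. The only point worth tightening is that the contraction step in the paper goes through the martingale identity $\nu_x = \lim M^n(T^{-n}x)_* \nu^+_{(T^{-n}x)_+}$ rather than directly through $\nu_{T^{-n}x}$, since it is the \emph{averaged} measures $\nu^+_{x_+}$ (not the raw $\nu_x$) for which one has the continuity needed to get uniform smallness on hyperplane neighborhoods.
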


Note that we are assuming that the cocycle is locally constant: This theorem
is wrong if the cocycle only has invariant continuous holonomies, see
Remark~\ref{rmk:not_locally_constant} below.

The rest of this subsection is devoted to the proof of this theorem. We have
already seen that $\mu \otimes \delta_{[v_1]}$ is an invariant $u$-state,
what needs to be shown is the uniqueness. Starting from an arbitrary
$u$-state $\nu$, we have to prove that it is equal to $\mu \otimes
\delta_{[v_1]}$.

As the cocycle is locally constant, one can quotient by the stable direction,
obtaining a unilateral subshift $T_+:\Sigma_+ \to \Sigma_+$ with a Gibbs
measure $\mu_+$, a vector bundle $E_+$ and a cocycle $M_+$. The measure
$\nu^+=(\pi_{E \to E_+})_* \nu$ is then invariant under the fibered dynamics
$T_{+, \Pbb}$. It can be written as $\mu_+ \otimes \nu^+_{x_+}$ for some
measurable family $\nu^+_{x_+}$ of probability measures on $\Pbb(E_+(x_+))$.

The following lemma is~\cite[Proposition 4.4]{avila_viana_criterion}.
\begin{lem}
\label{lem:hatnu_nux} Assume that $\nu$ is an invariant $u$-state. Then the
family of measures $\nu^+_{x_+}$, initially defined for $\mu_+$-almost every
$x_+$, extends to a (unique) family that depends continuously in the weak
topology on all $x_+\in \Sigma_+$.
\end{lem}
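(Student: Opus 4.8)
The plan is to exploit the fact that the cocycle on the one-sided system is genuinely a cocycle over the non-invertible map $T_+$, so that the disintegration $\nu^+_{x_+}$ satisfies an exact equivariance relation under the finitely many inverse branches of $T_+$; this relation will be used to upgrade almost-everywhere continuity to everywhere continuity by a fixed-point/limit argument. Concretely, invariance of $\nu^+$ under $T_{+,\Pbb}$ together with the formula for the jacobian of $\mu_+$ (from \S\ref{subsec:Gibbs}) gives, for $\mu_+$-almost every $x_+$,
\begin{equation*}
  \nu^+_{x_+} = \sum_{T_+ y_+ = x_+} e^{\phi(y_+)} \, (M_+(y_+))_* \nu^+_{y_+},
\end{equation*}
the weights $e^{\phi(y_+)}$ summing to $1$ because $\boL_\phi 1 = 1$ after the normalization of \S\ref{subsec:Gibbs}. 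Since $M_+$ and $\phi$ are continuous, the right-hand side is a continuous function of $(\nu^+_{y_+})$ in the weak topology. Iterating $N$ times expresses $\nu^+_{x_+}$ as a convex combination, with continuous weights depending on $x_+$, of pushforwards $(M_+^N(z_+))_* \nu^+_{z_+}$ over all $N$-th preimages $z_+$ of $x_+$.

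First I would fix a point $x_+$ and a sequence $x_+^{(j)} \to x_+$ along which the family is already defined (a full-measure set), and show the measures $\nu^+_{x_+^{(j)}}$ form a Cauchy sequence in the weak topology. For this, match up the $N$-th preimages of $x_+^{(j)}$ with those of $x_+$: a preimage $z_+$ of $x_+$ of order $N$ is determined by an admissible word $z_{-N}\dotsb z_{-1} x_0 x_1 \dotsb$, and the corresponding preimage of $x_+^{(j)}$ shares the coordinates $z_{-N},\dotsc,z_{-1}$ and then continues with the coordinates of $x_+^{(j)}$, so the two preimages lie in the same cylinder $[z_{-N},\dotsc,z_{-1}]$ and are therefore within $\theta^N$ of each other in the natural metric. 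Here the crucial point is that $M_+^N(z_+)$, being a product of $N$ matrices read off from $z_{-N},\dotsc,z_{-1}$ — coordinates common to both preimages — does \emph{not} depend on which of the two preimages we use once $N$ is fixed (this is where local constancy / the one-sided reduction is essential). Hence the pushforward measures agree exactly on the overlapping part, and the weights differ by $O(\theta^{\alpha N})$ by Hölder continuity of $\phi$ and the Gibbs property. Choosing $N$ large then makes $\mathrm{dist}(\nu^+_{x_+^{(j)}},\nu^+_{x_+^{(k)}})$ small uniformly in $j,k$, so the sequence converges; one checks the limit is independent of the chosen sequence (two sequences interleave into one) and agrees $\mu_+$-a.e.\ with the original family. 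This defines $\tilde\nu^+_{x_+}$ for every $x_+$.

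Then I would verify that $x_+ \mapsto \tilde\nu^+_{x_+}$ is continuous everywhere: given $x_+^{(j)} \to x_+$ arbitrary (not necessarily in the good set), approximate each $x_+^{(j)}$ and $x_+$ by points of the good set and run the same preimage-matching estimate, whose bound $O(\theta^{\alpha N})$ depends only on $N$ and not on the base points. Uniqueness of the continuous extension is immediate since two continuous families agreeing on a dense set coincide. Finally, one records that the extended family still disintegrates $\nu^+$ and is still $T_{+,\Pbb}$-invariant, by continuity of both sides of the invariance identity. The main obstacle is the preimage-matching step: one must set up the bijection between $N$-th preimages of nearby points carefully (the admissibility constraints at the junction $z_{-1}x_0$ are the same for both points since they share $x_0$), and one must be scrupulous that the matrices $M_+^N$ appearing in the two convex combinations literally coincide term-by-term so that the error is entirely in the scalar weights — this is exactly the place where the hypothesis that the cocycle is locally constant (equivalently, that we have genuinely descended to the one-sided system) cannot be dispensed with, cf.\ Remark~\ref{rmk:not_locally_constant}.
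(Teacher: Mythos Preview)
There is a genuine gap. Your iterated invariance relation
\[
  \nu^+_{x_+} \;=\; \sum_{T_+^N z_+ = x_+} e^{S_N\phi(z_+)}\,(M_+^N(z_+))_*\,\nu^+_{z_+}
\]
does express $\nu^+_{x_+}$ as a convex combination indexed by admissible words $w=(z_{-N},\dotsc,z_{-1})$, and you are right that the matrix $A_w \coloneqq M_+^N(z_+)$ and (up to $O(\theta^{\alpha N})$) the weight depend only on $w$, not on the tail $x_+$. But the \emph{source measures} $\nu^+_{z_+}$ in the combination do depend on the tail: the $N$-th preimage of $x_+^{(j)}$ labelled by $w$ is $z_+^{(j,w)}\neq z_+^{(k,w)}$, and nothing you have written controls $(A_w)_*\nu^+_{z_+^{(j,w)}}-(A_w)_*\nu^+_{z_+^{(k,w)}}$. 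The sentence ``the pushforward measures agree exactly on the overlapping part'' is therefore not justified: equality of the matrices does not force equality of the pushforwards. Absent a contraction of the projective action of $A_w$ (which would require a simplicity hypothesis on the top exponent that this lemma does not assume), this term is of order $1$ and the Cauchy estimate does not close.

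A structural symptom: your argument uses only $T_{+,\Pbb}$-invariance of $\nu^+$, never the $u$-state hypothesis. But invariance alone cannot give the conclusion: for each Oseledets index $i$ the measure $\mu\otimes\delta_{[v_i(x)]}$ is $T_{\Pbb}$-invariant, yet its fibre disintegration is in general only measurable in $x_+$, not continuous. So the hypothesis you dropped is exactly the one doing the work. The paper's proof uses it directly: by the $u$-state property the two-sided conditionals satisfy $\nu_{(x_-,y_+)}=(H^u_{(x_-,x_+)\to(x_-,y_+)})_*\nu_{(x_-,x_+)}$ (in the locally constant case this reads simply $\nu_{(x_-,y_+)}=\nu_{(x_-,x_+)}$, i.e.\ $\nu_x$ depends only on $x_-$), and then
\[
  \nu^+_{x_+}=\int \nu_{(x_-,x_+)}\,\dd\mu^-_{x_+}(x_-)
\]
is continuous in $x_+$ because $x_+\mapsto\mu^-_{x_+}$ and the holonomies are continuous. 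No invariance under $T_{+,\Pbb}$ and no preimage-matching are needed.
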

For completeness, we sketch the proof, leaving aside the technical details.
\begin{proof}
The measure $\nu^+_{x_+}$ is obtained by averaging all the conditional
measures $\nu_{x}$ over all points $x$ which have the future $x_+$, i.e.,
over the points $(x_-, x_+)$, with respect to a conditional measure
$\dd\mu^-_{x_+}(x_-)$. If $y_+$ is close to $x_+$, one has $y_0=x_0$, so the
possible pasts of $y_+$ are the same as the possible pasts of $x_+$. For any
continuous function $f$ on projective space, we obtain
\begin{equation*}
  \int f \dd\nu^+_{x_+} = \int \left(\int f \dd  \nu_{x_-, x_+}\right) \dd\mu^-_{x_+}(x_-), \quad
  \int f \dd\nu^+_{y_+} = \int \left(\int f \dd  \nu_{x_-, y_+}\right) \dd\mu^-_{y_+}(x_-).
\end{equation*}
When $y_+$ is close to $x_+$, the measures $\dd\mu^-_{x_+}$ and
$\dd\mu^-_{y_+}$ are equivalent, with respective density close to $1$, as we
explained in Paragraph~\ref{subsec:Gibbs}. Moreover, by holonomy invariance
of the conditional measures of $\nu$,
\begin{equation*}
  \int f \dd  \nu_{x_-, y_+} = \int f \circ H^u_{(x_-, x_+) \to (x_-, y_+)} \dd \nu_{x_-, x_+}.
\end{equation*}
By continuity of the holonomies, the function $f \circ H^u_{(x_-, x_+) \to
(x_-, y_+)}$ is close to $f$ if $y_+$ is close to $x_+$. It follows that
$\int f \dd\nu^+_{y_+}$ is close to $\int f \dd\nu^+_{x_+}$, as desired.
Details can be found in~\cite[Section 4.2]{avila_viana_criterion}.
\end{proof}

Henceforth, we write $\nu^+_{x_+}$ for the family of conditional measures,
depending continuously on $x_+$. The next lemma is a version
of~\cite[Proposition 5.1]{avila_viana_criterion} in our setting.

\begin{lem}
\label{lem:hyperplane_0} Assume that $M$ is strongly irreducible in the sense
of Definition~\ref{def:strongly_irreducible}. Let $\nu$ be an invariant
$u$-state, write $\nu^+_{x_+}$ for the continuous fiberwise decomposition of
Lemma~\ref{lem:hatnu_nux}. Then, for any $x_+$, for any hyperplane $L \subset
\Pbb(E_+(x_+))$, one has $\nu^+_{x_+}(L)=0$.
\end{lem}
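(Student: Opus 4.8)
The plan is to argue by contradiction, following the strategy of Avila--Viana. Suppose that for some $x_+$ there is a hyperplane carrying positive $\nu^+_{x_+}$-mass. First I would extract, among all hyperplanes $L$ (or more generally among all proper subspaces), those of minimal dimension $k$ that carry the maximal mass $c>0$ for some base point; since the Grassmannian is compact and $\nu^+_{x_+}$ depends continuously on $x_+$ (Lemma~\ref{lem:hatnu_nux}), the supremum $c$ is attained, and for each $x_+$ the set of $k$-dimensional subspaces $V$ with $\nu^+_{x_+}(V)=c$ is finite, with cardinality bounded by $\lfloor 1/c\rfloor$ (because distinct subspaces of the maximal mass must, by minimality of $k$, intersect in a subspace of smaller dimension hence of zero mass, so their masses add). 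Let $N(x_+)$ be this finite collection. By upper semicontinuity of $V \mapsto \nu^+_{x_+}(V)$ and the already-noted continuity in $x_+$, the number of such subspaces is locally constant, hence constant $=N$ by connectedness, and the family $x_+ \mapsto \{V_1(x_+),\dots,V_N(x_+)\}$ is continuous.

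Next I would check that this finite family is invariant, both under the cocycle and under the holonomies, contradicting strong irreducibility. Invariance under $M_+$: the measure $\nu^+$ is $T_{+,\Pbb}$-invariant, so $\nu^+_{x_+} = M_+(T_+^{-1}x_+)_* \nu^+_{T_+^{-1}x_+}$ summed appropriately over preimages; since each preimage branch pushes a maximal-mass subspace to a subspace of the same mass, one gets $M_+(y_+)\{V_i(y_+)\} \subseteq \{V_j(T_+ y_+)\}$, and a counting argument (the total mass $Nc$ is preserved) forces equality, so the family is $M$-invariant. Holonomy invariance: the measures $\nu^+_{x_+}$ come from a $u$-state, and after passing to the one-sided quotient the relevant statement is that $\nu^+_{x_+}$ only depends on $x_+$ in a way compatible with the (now trivial, since the cocycle is locally constant) unstable holonomy; more carefully, one lifts to the two-sided subshift where $\nu_{x}$ is $H^u$-invariant, and since the family of maximal subspaces is canonically attached to the measure, it inherits both $H^u$- and $H^s$-invariance. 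This exhibits exactly the configuration forbidden by Definition~\ref{def:strongly_irreducible}, a contradiction.

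To finish, I would dispose of one subtlety: the definition of ``not strongly irreducible'' requires $0<k<d$, so I must rule out $k=d$, i.e.\ the possibility that the only mass-carrying ``subspace'' is the whole space --- but $L$ ranges over hyperplanes, so $k\le d-1<d$ automatically, and $k\ge 1$ since $\{0\}$ has no mass. I should also make sure the maximal mass is genuinely positive: this is exactly the negation of the conclusion, so it is what the contradiction hypothesis supplies.

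The main obstacle is making the passage between the two-sided picture (where holonomy invariance of $\nu_x$ is the hypothesis) and the one-sided picture (where the finite family of subspaces lives and where strong irreducibility is phrased) fully rigorous, in particular verifying that the extracted finite family genuinely inherits invariance under \emph{both} stable and unstable holonomies and under the cocycle simultaneously, with matching cardinalities at every point. This is precisely the technical heart of \cite[Proposition 5.1]{avila_viana_criterion}, and I would follow that argument closely, using the local constancy of the cocycle to make the stable holonomy invariance automatic.
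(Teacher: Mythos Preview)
Your overall strategy matches the paper's proof closely: argue by contradiction, extract the minimal dimension $k$ and maximal mass $\gamma_0$, show the family $\boV(x_+)$ of $k$-dimensional subspaces of mass $\gamma_0$ has bounded cardinality, then verify invariance under the cocycle and both holonomies to contradict strong irreducibility.

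There is, however, a genuine gap in your argument for why $\Card\boV(x_+)$ is constant. You write that the cardinality is ``locally constant, hence constant $=N$ by connectedness.'' But $\Sigma_+$ is a subshift of finite type, hence totally disconnected, so connectedness is unavailable. Moreover, the semicontinuity you invoke only shows that $x_+ \mapsto \Card\boV(x_+)$ is \emph{lower} semicontinuous (limits of maximal-mass subspaces still have mass $\geq\gamma_0$, hence $=\gamma_0$, and they remain distinct since a coincidence would produce a subspace of mass $\geq 2\gamma_0$); lower semicontinuity alone does not force constancy, even on a connected space.

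The paper fills this gap dynamically rather than topologically. One takes a point $b_+$ where $\Card\boV(b_+)$ attains its maximum $N$. The averaging identity
\[
\nu^+_{b_+}(V)=\sum_{T_+ y_+=b_+} w(y_+)\,\nu^+_{y_+}\bigl(M_+(y_+)^{-1}V\bigr),
\]
together with maximality of $\gamma_0$, forces $M_+(y_+)^{-1}V\in\boV(y_+)$ for every preimage $y_+$ and every $V\in\boV(b_+)$; so $\Card\boV(y_+)\geq N$, hence $=N$ by maximality of $N$. Iterating, $\Card\boV=N$ on all of $\bigcup_n T_+^{-n}\{b_+\}$, which is dense by transitivity. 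Lower semicontinuity then gives $\Card\boV \geq N$, hence $=N$, at every $x_+$. This dynamical propagation step, which simultaneously yields the cocycle-invariance of the family, is the piece your sketch is missing.
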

\begin{proof}
Assume by contradiction that $\nu^+_{x_+}$ gives positive mass to some
hyperplane, for some $x_+$. We will then construct a family of subspaces as
in Definition~\ref{def:strongly_irreducible}, contradicting the strong
irreducibility of the cocycle.

Let $k$ be the minimal dimension of a subspace with positive mass at some
point. Let $\gamma_0$ be the maximal mass of such a $k$-dimensional subspace.
By continuity of $x_+ \mapsto \nu^+_{x_+}$ and compactness, there exist a
point $a_+$ and a $k$-dimensional subspace $V$ with $\nu^+_{a_+}(V) =
\gamma_0$ (\cite[Lemma 5.2]{avila_viana_criterion})

Let $\boV(x_+)$ be the set of all $k$-dimensional subspaces $V$ of $E_+(x_+)$
with $\nu^+_{x_+}(V)=\gamma_0$. Two elements of $\boV(x_+)$ intersect in a
subspace of dimension $<k$, which has measure $0$ by minimality of $k$.
Hence, $\gamma_0 \Card \boV(x_+)= \nu^+_{x_+}(\bigcup_{V \in \boV(x_+)} V)$.
As this is at most $1$, the cardinality of $\boV(x_+)$ is bounded from above,
by $1/\gamma_0$.

Consider a point $b_+$ where the cardinality $N$ of $\boV(b_+)$ is maximal.
For each $V \in \boV(b_+)$, $\nu^+_{b_+}(V)$ is an average of
$\nu^+_{x_+}(M(x_+)^{-1}V)$ over all preimages $x_+$ of $b_+$ under $T_+$
(see~\cite[Corollary 4.7]{avila_viana_criterion}). By maximality, all the
$M(x_+)^{-1}V$ also have mass $\gamma_0$ for $\nu^+_{x_+}$. Iterating this
process, one obtains for all points in $T_+^{-n}\{b_+\}$ at least $N$
subspaces with measure $\gamma_0$ (and in fact exactly $N$ by maximality).
The set $\bigcup_n T_+^{-n}\{b_+\}$ is dense. Hence, any $x_+$ is a limit of
a sequence $x_n$ for which $\boV(x_n)$ is made of $N$ subspaces
$V_1(x_n),\dotsc, V_N(x_n)$. Taking subsequences, we can assume that each
sequence $V_i(x_n)$ converges to a subspace $V_i$, which belongs to $\boV(y)$
by continuity of $y_+\mapsto \nu^+_{y_+}$. Moreover, one has $V_i \neq V_j$
for $i\neq j$: otherwise, the corresponding space would have measure at least
$2 \gamma_0$, contradicting the definition of $\gamma_0$. This shows that the
cardinality of $\boV(x_+)$ is at least $N$, and therefore exactly $N$.

We have shown that the family $\boV(x_+)$ is made of exactly $N$ subspaces
everywhere, that it depends continuously on $x_+$ and that it is invariant
under $T_{+,\Pbb}$. We lift everything to the bilateral subshift $\Sigma$,
setting $\boV(x) = \boV(\pi_{\Sigma \to \Sigma_+} x)$. By construction, the
family is invariant under the dynamics $T_{\Pbb}$. As $V_i$ does not depend
on the past of the points, it is invariant under the stable holonomy (which
is just the identity when one moves along stable sets, thanks to our choice
of trivialization of the bundle).

The family $\boV(x)$ only depends on $x_+$. We claim that, in fact, it only
depends on $x_0$, i.e., it is also invariant under the unstable holonomy. Fix
some $x_+$, and some $y_+$ with $y_0=x_0$. Then $\gamma_0 =
\nu^+_{x_+}(V_i(x_+))$ is an average of the quantities $\nu_{(x_-, x_+)}
(V_i(x_+))$ over all possible pasts $x_-$ of $x_+$. One deduces from this
that $\nu_{(x_-, x_+)} (V_i(x_+)) = \gamma_0$ for almost every such $x_-$,
see~\cite[Lemma 5.4]{avila_viana_criterion}. As $\nu$ is invariant under
unstable holonomy, we obtain $\nu_{(x_-, y_+)} (V_i(x_+)) = \gamma_0$ for
almost every $x_-$. Integrating over $x_-$, we get $\nu^+_{y_+}(V_i(x_+)) =
\gamma_0$. Hence, $V_i(x_+) \in \boV(y_+)$. This shows that
$\boV(x_+)=\boV(y_+)$ if $x_0=y_0$ (almost everywhere and then everywhere by
continuity). Hence, $\boV$ is locally constant. This shows that $M$ is not
strongly irreducible.
\end{proof}
Let us explain how this proof fails if the cocycle are not locally constant,
i.e., if the holonomies do not commute. Let us argue in a trivialization were
the stable holonomies are the identity. The failure is at the end of the
proof, when we show that the family $\boV(x)$ is invariant under unstable
holonomy. We can indeed prove that $\nu_{(x_-, x_+)}(V_i(x_+))= \gamma_0$ for
almost every $x_-$. Then, it follows that $\nu_{(x_-, y_+)} (H^u_{(x_-, x_+)
\to (x_-, y_+)} V_i(x_+)) = \gamma_0$. The problem is that the subspaces
$H^u_{(x_-, x_+) \to (x_-, y_+)} V_i(x_+)$ vary with $x_-$, so one can not
integrate this equality with respect to $x_-$, to obtain a subspace $V$ with
$\nu_{y_+}^+ (V) = \gamma_0$.

\begin{proof}[Proof of Theorem~\ref{thm:u-state-unique}]
Let $\nu$ be a $u$-state, let $\mu\otimes \nu_x$ be its fiberwise
disintegration, and $\nu^+_{x_+}$ the conditional expectation of $\nu_x$ with
respect to the future sigma-algebra. The martingale convergence theorem shows
that, almost surely,
\begin{equation}
\label{eq:nux_limit}
  \nu_x = \lim M^n(T^{-n} x)_* \nu^+_{(T^{-n} x)_+},
\end{equation}
see~\cite[Proposition 3.1]{avila_viana_criterion}.

Let $\epsilon>0$. We may find $\delta$ such that, for any $x_+$ and any
hyperplane $L \subseteq E_+(x_+)$, the $\delta$-neighborhood of $L$ in
$\Pbb(E_+(x))$ (for some fixed distance on projective space) satisfies
$\nu^+_{x_+}(\boN_\delta(L)) \leq \epsilon$, thanks to
Lemma~\ref{lem:hyperplane_0} and continuity of the measures.

Let $E_1(x)=\R v_1(x)$ be the top Oseledets subspace of $M$, and $E_2(x)$ be
the sum of the other subspaces. Let $A$ be a compact subset of $\Sigma$ with
positive measure on which the decomposition $E(x)=E_1(x)\oplus E_2(x)$ is
continuous and on which the convergence in Oseledets theorem is uniform. Fix
$x \in A$. By Poincaré's recurrence theorem, there exists almost surely an
arbitrarily large $n$ such that $T^{-n} x \in A$. In the decomposition $E=E_1
\oplus E_2$, the cocycle $M^n(T^{-n} x)$ is block diagonal, with the first
(one-dimensional) block dominating exponentially the other one. Hence, it
sends $\Pbb(E(T^{-n} x)) \setminus \boN_\delta(E_2(T^{-n} x))$ (whose
$\nu^+_{(T^{-n} x)_+}$-measure is at least $1-\epsilon$ thanks to the choice
of $\delta$) to an $\epsilon$-neighborhood of $E_1(x)$ if $n$ is large
enough. Therefore, $M^n(T^{-n} x)_* \nu^+_{(T^{-n} x)_+}
(\boN_\epsilon([v_1(x)])) \geq 1-\epsilon$. Letting $\epsilon$ tend to $0$,
we get $\nu_x([v_1(x)]) = 1$ thanks to~\eqref{eq:nux_limit}. As the measure
of $A$ can be taken arbitrarily close to $1$, we finally get that $\nu_x$ is
almost everywhere equal to $\delta_{[v_1(x)]}$.
\end{proof}

\begin{rmk}
\label{rmk:not_locally_constant} Theorem~\ref{thm:u-state-unique} is wrong in
general for cocycles which are not locally constant. The difficulty is in
Lemma~\ref{lem:hyperplane_0}: If the cocycle $M$ merely admits invariant
continuous holonomies, there is no reason why the invariant family of
subspaces $\boV(x)$ we construct there should be invariant under the unstable
holonomy, even though $\nu_x$ is. Here is an example of a strongly
irreducible cocycle with simple Lyapunov exponents, over the full shift on
two symbols endowed with any Gibbs measure, which admits two $u$-states.

Let $\Sigma$ be the full shift, let $E=\Sigma \times \R^3$ and let $M$ be the
constant cocycle given by the matrix
$\left(\begin{smallmatrix} 3 & 0 & 0 \\ 0 & 2 & 0 \\
0 & 0 & 1\end{smallmatrix}\right)$. We introduce the holonomies
\begin{align*}
  H_{x \to y}^u & = \left(\begin{matrix}
    1 & 0 & \sum_{n\geq 0} 3^{-n} (y_n - x_n) \\
    0 & 1 & \sum_{n\geq 0} 2^{-n} (y_n - x_n) \\
    0 & 0 & 1
  \end{matrix} \right), \\
\intertext{and}
  H_{x \to y}^s & = \left(\begin{matrix}
    1 & 0 & 0  \\
    0 & 1 & 0 \\
    \sum_{n\leq 0} 3^{n} (y_n - x_n) & \sum_{n\leq 0} 2^{n} (y_n - x_n) & 1
  \end{matrix} \right).
\end{align*}
One checks easily that they are indeed holonomies, and that they are
invariant under $T$. Let $e_i$ denote the $i$-th vector of the canonical
basis. As $e_1$ and $e_2$ are invariant under the unstable holonomies, they
give rise to two distinct $u$-states.

We claim that the cocycle is strongly irreducible. Indeed, consider a nonzero
subbundle $F$ of $E$ which is invariant under $T$ and the holonomies, we will
show that $F=E$. Considering the Oseledets decomposition of $F$ under the
cocycle, it follows that $F$ is spanned by some subfamily $(e_i)_{i\in I}$.
If $1 \in I$ or $2 \in I$, then the invariance under stable holonomy implies
that $3\in I$, since $H^s e_1$ and $H^s e_2$ have a nonzero component along
$e_3$. Hence, $e_3 \in F$ almost everywhere. Then, using the invariance under
unstable holonomy, we deduce that $e_1 \in F$ and $e_2 \in F$ almost
everywhere, as $H^u e_3$ has nonzero components along $e_1$ and $e_2$.
Finally, $F=E$.
\end{rmk}

\subsection{The case of locally constant cocycles}
\label{subsec:locally_constant}

In this paragraph, we prove Theorem~\ref{thm:large_deviations}~(4): if a
cocycle is locally constant above a transitive subshift of finite type, then
it has exponential large deviations for all exponents. The main step is the
following result:

\begin{thm}
\label{thm:strongly_irreducible} Consider a continuous cocycle over a
transitive subshift of finite type endowed with a Gibbs measure, admitting
invariant continuous holonomies. Assume that it has a unique $u$-state. Then
the cocycle has exponential large deviations for its top exponent.
\end{thm}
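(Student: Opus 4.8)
The plan is to express the growth of $\log\norm{M^n(x)}$ as a Birkhoff-type sum of a continuous observable over the projective dynamics $T_{\Pbb}$, and then to use the uniqueness of the $u$-state to force convergence of the relevant averages, together with large deviations for Birkhoff sums of continuous functions on a subshift of finite type. Concretely, write $\norm{M^n(x) v} = \norm{M(T^{n-1}x)\dotsm M(x) v}$ and note that, for a unit vector $v$,
\begin{equation*}
  \log \norm{M^n(x) v} = \sum_{k=0}^{n-1} \phi\pare*{T_{\Pbb}^k (x, [v])},
  \qquad \phi(x,[v]) = \log \frac{\norm{M(x) v}}{\norm{v}}.
\end{equation*}
The function $\phi$ is continuous on $\Pbb(E)$ because $M$ and the norm are continuous. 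Taking the supremum over $v$, $\log\norm{M^n(x)}$ is a sup over $[v]$ of Birkhoff sums of $\phi$; the upper bound $\log\norm{M^n(x)} \leq n\lambda_1 + n\epsilon$ on a set of exponentially large measure is already Proposition~\ref{prop:upper}, so only the lower bound~\eqref{eq:lower_bound} for $i=1$ (i.e.\ for $\log\norm{M^n(x)}$ itself) needs to be proved; the statement only claims exponential large deviations for the top exponent.

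First I would show that $\int \phi \dd\nu = \lambda_1$ for the unique invariant $u$-state $\nu = \mu\otimes\delta_{[v_1]}$: indeed $\int \phi \dd\nu = \int \log\pare*{\norm{M(x) v_1(x)}/\norm{v_1(x)}}\dd\mu(x)$, and since $M(x) E_1(x) = E_1(Tx)$ this integrand is a coboundary plus the growth rate, so its integral is $\lambda_1$ by the Oseledets theorem and Birkhoff's theorem. Next, the key analytic input: since $T_{\Pbb}$ has a \emph{unique} invariant $u$-state, and since any weak limit point of the empirical measures $\frac1n\sum_{k=0}^{n-1}(T_{\Pbb}^k)_*(\mu\otimes\nu_x^0)$ built from a \emph{fixed} continuous family $\nu_x^0$ of measures on $\Pbb(E(x))$ invariant under unstable holonomies and giving zero mass to hyperplanes is an invariant $u$-state, all such empirical averages converge to $\nu$. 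Pushing this through, one gets $\frac1n \log\norm{M^n(x) v(x)} \to \lambda_1$ in a suitably uniform sense for such a choice of section $v(x)$ transverse to the non-dominant directions. To upgrade this to exponential large deviations I would run the argument quantitatively: cover $\Pbb(E)$ by finitely many "sections", decompose $\log\norm{M^n(x)} \geq \log\norm{M^n(x) v(x)}$ for an appropriate measurable choice of unit $v(x)$ avoiding a neighborhood of the worst hyperplane, and write this as a Birkhoff sum along $T_{\Pbb}$-orbits; then apply large deviations for continuous functions on the subshift of finite type (as used in Lemma~\ref{lem:upper_anx}) after controlling, with exponentially small exceptional measure, the event that the orbit of $(x,[v(x)])$ enters a small neighborhood of the bad hyperplanes too often — and that last control is exactly where uniqueness of the $u$-state, via non-concentration of all invariant measures near hyperplanes, is used.

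The main obstacle, as I see it, is making the "uniqueness of $u$-state $\Rightarrow$ uniform non-return near hyperplanes $\Rightarrow$ exponential large deviations" chain quantitative rather than merely asymptotic. Qualitative convergence of empirical measures to the unique $u$-state gives a law of large numbers but not the exponential rate; one needs an effective statement of the form "for every $\delta>0$ there are $C,N$ such that for a positive-measure set of starting fibers, the orbit spends at most $\delta n$ of its first $n$ steps in the $C^{-1}$-neighborhood of any hyperplane, outside a set of measure $Ce^{-C^{-1}n}$." I expect this to follow by combining the large deviations for Birkhoff sums of a well-chosen continuous function that measures proximity to hyperplanes (itself legitimate on the subshift of finite type) with the fact that, by uniqueness, $\int (\text{this function})\dd\nu'$ is bounded away from $1$ uniformly over invariant $u$-states $\nu'$ — but wiring together the base dynamics $T$ and the fibered dynamics $T_{\Pbb}$ so that the large deviations on the base transfer to the fiber is the delicate point, and is presumably where the techniques of~\cite{bonatti_viana_lyapunov, avila_viana_criterion} are invoked in earnest.
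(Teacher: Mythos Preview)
Your setup is right: the cocycle function $\phi(x,[v]) = \log(\norm{M(x)v}/\norm{v})$ on $\Pbb(E)$, the Furstenberg identity $\int\phi\dd\nu_u = \lambda_1$, and the observation that only the lower bound~\eqref{eq:lower_bound} for $i=1$ needs proof. But the step where you ``apply large deviations for continuous functions on the subshift of finite type'' is the gap, and you correctly flag it yourself. The Birkhoff sum $\sum_{k<n}\phi(T_{\Pbb}^k(x,[v]))$ lives on the fibered system $T_{\Pbb}$, not on $\Sigma$; for no measurable choice of section $v(x)$ does it become a Birkhoff sum of a function on $\Sigma$, because $M(x)[v(x)]\neq[v(Tx)]$ in general. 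Your proposed fix---control time spent near hyperplanes, then use base LDP---is circular: controlling that time with exponential precision is itself a large deviations statement on $\Pbb(E)$, of the same nature as the one you want. Non-concentration on hyperplanes was used earlier (Lemma~\ref{lem:hyperplane_0}) to establish uniqueness of the $u$-state, but once uniqueness is assumed, the proof does not revisit hyperplanes at all.

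The paper's route is the Guivarc'h--Le~Page exponential moment method, which bypasses any LDP on $\Pbb(E)$. After passing to coordinates in which $M$ depends only on the past, one introduces for each $(x,[v])$ the measure $\nu_{(x,[v])}$ on $\Pbb(E)$ obtained by lifting the conditional measure $\mu_{x_-}^+$ on $W_{\loc}^u(x_-)$ via the unstable holonomy through $[v]$. These measures have a Markov property under $\boM=(T_{\Pbb})_*$: the push-forward $\boM^N\nu_{(x,[v])}$ is a convex combination of measures of the same form (equation~\eqref{eq:iterate_boM}). Uniqueness of the $u$-state, combined with~\eqref{eq:conv_boL} for the base transfer operator, gives uniform convergence of the Cesaro averages $\frac{1}{N}\sum_{n<N}\int g_0\dd\boM^n\nu_{(x,[v])}\to\int g_0\dd\nu_u<0$ where $g_0=(\lambda_1-\epsilon)-\phi$ (Proposition~\ref{prop:limboMn}). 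Hence for some fixed $N$, uniformly in $(x,[v])$, one has $\int S_N g_0\dd\nu_{(x,[v])}\leq -c_0 N$. The inequality $e^t\leq 1+t+t^2 e^{|t|}$ then gives $\int e^{\alpha S_N g_0}\dd\nu_{(x,[v])}\leq e^{-\beta N}$ for small $\alpha>0$. The Markov property lets this bound iterate: on each piece of $\boM^{nN}\nu_{(x,[v])}$ the factor $e^{\alpha S_{nN}g_0\circ T_{\Pbb}^{-nN}}$ is \emph{constant} (because $g_0$ and $M$ depend only on the past), so $\int e^{\alpha S_{(n+1)N}g_0}\dd\nu_{(x,[v])}\leq e^{-\beta N}\int e^{\alpha S_{nN}g_0}\dd\nu_{(x,[v])}$. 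Chebyshev against $\int\nu_{(x,[v])}\dd\mu(x)$ finishes. The point you were missing is precisely this Laplace-transform mechanism together with the fiberwise measures $\nu_{(x,[v])}$ and their decoupling property; no reduction to large deviations on the base is attempted or needed.
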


Before proving this theorem, let us show by successive reductions how it
implies Theorem~\ref{thm:large_deviations}~(4).

\begin{lem}
\label{lem:strongly_irreducible1} Consider a locally constant cocycle which
is strongly irreducible and has simple top Lyapunov exponent, above a
subshift of finite type with a Gibbs measure. Then it has exponential large
deviations for its top exponent.
\end{lem}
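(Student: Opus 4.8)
The plan is to deduce Lemma~\ref{lem:strongly_irreducible1} directly from the two theorems already available in this subsection. Given a locally constant cocycle $M$ over a transitive subshift of finite type with Gibbs measure $\mu$, assume it is strongly irreducible and has simple top Lyapunov exponent $\lambda_1$. Since a locally constant cocycle admits the identity as commuting invariant continuous holonomies, all hypotheses of Theorem~\ref{thm:u-state-unique} are satisfied: strong irreducibility and simplicity of $\lambda_1$. Hence $T_{\Pbb}$ has a \emph{unique} invariant $u$-state, namely $\mu \otimes \delta_{[v_1]}$.

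Next I would feed this uniqueness into Theorem~\ref{thm:strongly_irreducible}, whose hypotheses are: a continuous cocycle over a transitive subshift of finite type with a Gibbs measure, admitting invariant continuous holonomies, and having a unique $u$-state. A locally constant cocycle is in particular continuous and admits invariant continuous holonomies (the identity), and we have just established uniqueness of the $u$-state. Therefore Theorem~\ref{thm:strongly_irreducible} applies verbatim and yields exponential large deviations for the top exponent of $M$, which is precisely the conclusion of the lemma.

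There is essentially no obstacle here: the lemma is a bookkeeping step whose sole purpose is to record the implication ``locally constant + strongly irreducible + simple top exponent $\Rightarrow$ exponential large deviations for the top exponent'' by chaining Theorem~\ref{thm:u-state-unique} into Theorem~\ref{thm:strongly_irreducible}. The only minor point to mention in the proof is the standard observation (already made in the text preceding Definition~\ref{def:strongly_irreducible}) that for a locally constant cocycle the identity maps $H^s_{x\to y}=H^u_{x\to y}=\Id$ form invariant continuous commuting holonomies, so that both cited theorems are indeed applicable. I expect the ``proof'' to be two or three sentences long, simply invoking these two results in sequence.
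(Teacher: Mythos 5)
Your proposal matches the paper's proof exactly: the paper also deduces uniqueness of the invariant $u$-state from Theorem~\ref{thm:u-state-unique} and then invokes Theorem~\ref{thm:strongly_irreducible} to conclude. Nothing is missing.
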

\begin{proof}
By Theorem~\ref{thm:u-state-unique}, the cocycle admits a unique $u$-state.
Hence, the result follows from Theorem~\ref{thm:strongly_irreducible}.
\end{proof}

\begin{lem}
\label{lem:strongly_irreducible2} Consider a locally constant cocycle which
has simple top Lyapunov exponent, above a subshift of finite type with a
Gibbs measure. Then it has exponential large deviations for its top exponent.
\end{lem}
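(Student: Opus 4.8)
The plan is to argue by induction on the dimension $d$ of the bundle. Recall first from Proposition~\ref{prop:upper} (applied with $i=1$) that for \emph{any} continuous cocycle the upper tail $\mu\{x \st \log\norm{M^n(x)} \geq n\lambda_1 + n\epsilon\}$ is already exponentially small; so the whole content of the statement is the lower tail $\mu\{x \st \log\norm{M^n(x)} \leq n\lambda_1 - n\epsilon\} \leq C e^{-C^{-1}n}$, i.e.\ one must exhibit, with exponentially large probability, a vector expanded by $M^n(x)$ by at least $e^{n(\lambda_1-\epsilon)}$. When $d=1$, or more generally whenever $M$ is strongly irreducible, this is exactly Lemma~\ref{lem:strongly_irreducible1}; so assume $d\geq 2$ and that $M$ is not strongly irreducible.

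By Definition~\ref{def:strongly_irreducible} there are then an integer $0<k<d$ and a continuous family $V_1(x),\dotsc,V_N(x)$ of distinct $k$-dimensional subspaces of $E(x)=\R^d$, invariant as a whole under $M$ and under the holonomies; since $M$ is locally constant this means, as noted after Definition~\ref{def:strongly_irreducible}, that each $V_i$ depends only on $x_0$, and consequently the permutation $\sigma_x$ of labels defined by $M(x)\{V_1(x),\dotsc,V_N(x)\}=\{V_{\sigma_x(1)}(Tx),\dotsc\}$ depends only on $(x_0,x_1)$. First I would pass to the finite extension $\widehat\Sigma$ whose symbols record a symbol of $\Sigma$ together with a label in $\{1,\dotsc,N\}$, with transitions dictated by $\sigma$: this is again a subshift of finite type, the factor map $\pi\colon\widehat\Sigma\to\Sigma$ is finite-to-one, and $\mu$ lifts to a Gibbs measure $\widehat\mu$ (for the Hölder potential $\phi\circ\pi$) on a transitive component of $\widehat\Sigma$, with $\pi_*\widehat\mu=\mu$; this is standard, see~\cite[Section~4]{avila_viana_criterion}. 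On $\widehat\Sigma$ the lifted cocycle $\widehat M(x,i)=M(x)$ is again locally constant, satisfies $\norm{\widehat M^n(x,i)}=\norm{M^n(\pi(x,i))}$, has the same Lyapunov exponents as $M$ (in particular simple top exponent $\lambda_1$), and carries the \emph{genuinely} invariant continuous subbundle $\widehat V(x,i)=V_i(x)$ of dimension $k$.

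Since $\widehat M$ has commuting invariant continuous holonomies (the identity) and $\widehat V$ is preserved by them, both the restriction $\widehat M|_{\widehat V}$ and the quotient $\widehat M/\widehat V$ inherit commuting invariant continuous holonomies, hence are locally constant in suitable trivializations, in bundles of dimension $k$ and $d-k$, both $<d$. Applying Lemma~\ref{lem:flag_split} to the flag $\{0\}\subseteq\widehat V\subseteq\widehat E$, the Lyapunov spectrum of $\widehat M$ is the union with multiplicity of those of $\widehat M|_{\widehat V}$ and $\widehat M/\widehat V$; as $\lambda_1$ is simple it is the \emph{simple top} Lyapunov exponent of exactly one of these two lower-dimensional locally constant cocycles, the other having all exponents $<\lambda_1$. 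By the induction hypothesis, that one has exponential large deviations for its top exponent, so in particular its lower tail is exponentially small. Since $\norm{\widehat M^n(x,i)}$ dominates both the operator norm of $\widehat M^n(x,i)$ restricted to $\widehat V(x,i)$ and the norm of the induced quotient map, and these differ from the norms of $\widehat M|_{\widehat V}$ and $\widehat M/\widehat V$ in their trivializations only by a uniformly bounded factor (the alphabet being finite), the lower tail for the relevant lower-dimensional cocycle gives $\widehat\mu\{\log\norm{\widehat M^n}\leq n\lambda_1-n\epsilon\}\leq C e^{-C^{-1}n}$ after slightly shrinking $\epsilon$ and treating small $n$ trivially. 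Pushing forward under $\pi$ and using $\norm{\widehat M^n(x,i)}=\norm{M^n(\pi(x,i))}$ with $\pi_*\widehat\mu=\mu$, the lower tail holds for $M$; together with the free upper tail this proves the lemma.

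I expect the only genuinely delicate point to be the passage to the finite extension: that $\widehat\Sigma$ can be arranged to be a transitive subshift of finite type carrying a Gibbs measure with $\pi_*\widehat\mu=\mu$, and --- more to the point for the induction --- that the induced cocycles on $\widehat V$ and $\widehat E/\widehat V$ remain \emph{locally constant} (and not merely admit continuous holonomies), which is what lets the induction close. This is precisely the reduction carried out in~\cite{avila_viana_criterion,bonatti_viana_lyapunov}, and I would follow that treatment; everything else --- the norm comparisons and the $\epsilon$ bookkeeping --- is routine.
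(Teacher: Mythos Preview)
Your argument is correct and follows essentially the same induction as the paper, but with a slightly different organization. The paper takes the invariant family $V_1,\dotsc,V_N$ with $N$ \emph{minimal} and then splits into two cases according to whether the span $V=V_1+\dotsb+V_N$ is all of $E$: if $V\subsetneq E$ it works directly on $\Sigma$ with the invariant subbundle $V$ and the quotient $E/V$; only when $V=E$ does it pass to the finite extension $\Sigma\times\{1,\dotsc,N\}$, where minimality of $N$ guarantees transitivity outright and the bundle is taken to be $V_i(x)$ itself (no quotient needed, since $V=E$ forces $\lambda_1$ to live in some $V_i$). You instead pass to the finite extension in all cases and then run the flag argument there with the single invariant subbundle $\widehat V(x,i)=V_i(x)$. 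Both routes close the induction; yours is more uniform, the paper's avoids the extension machinery when the span is already proper.

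Two small points. First, since you do not assume $N$ minimal, the extension need not be transitive; your remedy of restricting to a transitive component is fine, but the justification that the restricted measure is a Gibbs measure for $\phi\circ\pi$ projecting to $\mu$ is a direct computation (each component is a skew product by a transitive permutation action, so the normalized lift $\mu\times\text{uniform}$ on that orbit does the job) rather than something one finds in \cite[Section~4]{avila_viana_criterion}. Taking $N$ minimal, as the paper does, sidesteps this entirely. Second, your observation that the restriction and quotient cocycles remain locally constant is exactly right and is the point that makes the induction close: since $\widehat V$ depends only on the zeroth symbol of the extension, the identity holonomies preserve it, so both induced cocycles again have commuting invariant holonomies.
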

\begin{proof}
We argue by induction on dimension of the fibers of the cocycle. Consider a
cocycle $M$ on a bundle $E$ over a subshift of finite type $T$, with simple
top Lyapunov exponent. We will show that it has exponential large deviations
for its top exponent, assuming the same results for all cocycles on fiber
bundles with strictly smaller dimension. We will prove the lower
bound~\eqref{eq:lower_bound} (with $i=1$) for $M$.

If the cocycle $M$ is strongly irreducible, then the result follows from
Lemma~\ref{lem:strongly_irreducible1}, so assume that it is not. Consider a
locally constant invariant family $V_1(x),\dotsc, V_N(x)$ as in
Definition~\ref{def:strongly_irreducible}, such that $N$ is minimal. Let
$V(x)$ be the span of $V_1(x),\dotsc, V_N(x)$. It is also locally constant
and invariant under the cocycle and the holonomies.

Assume first that the dimension of $V$ is strictly smaller than that of $E$.
Define a cocycle $M_{V}$ as the restriction of $M$ to $V$, and a cocycle
$M_{E/ V}$ as the cocycle induced by $M$ on the quotient bundle $E/ V$. These
two cocycles are locally constant. By definition of the restriction norm and
the quotient norm, one has
\begin{equation}
\label{eq:quotient_restrict}
  \norm{M^n(x)} \geq \max (\norm{M_{V}^n(x)},
    \norm{M_{E/ V}^n(x)}).
\end{equation}
Moreover, by Lemma~\ref{lem:flag_split}, one of the two cocycles has
$\lambda_1$ as a simple top Lyapunov exponent, and these two cocycles are
locally constant and have strictly smaller fiber dimension. By our induction
assumption, we deduce that
\begin{equation*}
  \mu \{x \st \log \norm{M_W^n(x)}
    \leq n \lambda_1 - n \epsilon\} \leq C e^{-C^{-1}n},
\end{equation*}
where $W$ is either $V$ or $E/V$. The same bound follows for $M$ thanks
to~\eqref{eq:quotient_restrict}.

\smallskip

Assume now that the dimension of $V$ is equal to that of $E$, i.e., $V=E$.
Consider a new dynamics $\tilde T$, on $\tilde \Sigma= \Sigma \times
\{1,\dotsc, N\}$, mapping $(x, i)$ to $(Tx, j)$ where $j=j(x, i)$ is the
unique index such that $M(x) V_i(x) = V_j(T x)$. As $M$ and all the $V_k$
only depend on $x_0$, the function $j$ only depends on $i$, $x_0$ and $x_1$.
Hence, $\tilde T$ is a subshift of finite type. As we chose $N$ to be
minimal, there is no invariant proper subfamily of $V_1,\dotsc, V_N$. Hence,
$\tilde T$ is a transitive subshift. Let also $\tilde \mu$ be the product
measure of $\mu$ and the uniform measure on $\{1,\dotsc,N\}$, it is again a
Gibbs measure for $\tilde T$, therefore ergodic by transitivity.

Above $\tilde \Sigma$, we consider a new bundle $\tilde E(x, i) = V_i(x)$,
and the resulting cocycle $\tilde M$ which is the restriction of $M$ to
$V_i$. On any $E(x)$, one can find a basis made of vectors in the subspaces
$V_i(x)$, by assumption. It follows that $\norm{M^n(x)} \leq C
\max_i\norm{M^n(x)_{|V_i(x)}}$, for some uniform constant $C$. Hence, the top
Lyapunov exponent of $\tilde M$ is (at least, and therefore exactly)
$\lambda_1$. Moreover, it is simple as the top Oseledets space for $\tilde M$
in $\tilde E(x, i)$ is included in the top Oseledets space for $M$ in $E(x)$,
which is one-dimensional by assumption.

By our induction assumption, we obtain the bound~\eqref{eq:lower_bound} with
$i=1$ for the cocycle $\tilde M$ over the subshift $\tilde T$ and the measure
$\tilde \mu$ (note that it is important there that we have formulated the
induction assumption for all subshifts of finite type, not only the original
one). The result follows for the original cocycle as $\norm{M^n(x)} \geq
\norm{\tilde M^n(x, 1)}$ for all $x$.
\end{proof}

\begin{lem}
\label{lem:strongly_irreducible3} Consider a locally constant cocycle, above
a subshift of finite type with a Gibbs measure. Then it has exponential large
deviations for its top exponent.
\end{lem}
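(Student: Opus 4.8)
The statement differs from Lemma~\ref{lem:strongly_irreducible2} only in that the top Lyapunov exponent $\lambda_1$ is no longer assumed to be simple. The plan is to remove this non-simplicity by passing to an exterior power. Let $k = \Card\{j \st \lambda_j = \lambda_1\}$ be the multiplicity of $\lambda_1$, and consider the cocycle $\Lambda^k M$ on the bundle $\Lambda^k E$, equipped with the induced Euclidean norm. I would apply Lemma~\ref{lem:strongly_irreducible2} to $\Lambda^k M$ and then transfer the resulting estimate back to $M$.

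First I would check that $\Lambda^k M$ is again a locally constant cocycle over the same subshift $(T,\mu)$: in a trivialization in which $M(x)$ depends only on $x_0$, so does $\Lambda^k M(x)$. By Oseledets' theorem (Theorem~\ref{thm:Oseledets}), the Lyapunov exponents of $\Lambda^k M$ are the sums $\lambda_{i_1} + \dots + \lambda_{i_k}$ over $1 \leq i_1 < \dots < i_k \leq d$; the largest is $\lambda_1 + \dots + \lambda_k = k\lambda_1$, and it is attained only by the index set $\{1,\dots,k\}$ precisely because $\lambda_k > \lambda_{k+1}$ — which is exactly what it means for $k$ to be the multiplicity. Hence $\Lambda^k M$ has simple top Lyapunov exponent $k\lambda_1$ (its top Oseledets subspace being the line $\Lambda^k E_1(x)$), so Lemma~\ref{lem:strongly_irreducible2} gives, for every $\epsilon'>0$, a constant $C>0$ with, for all $n\geq 0$,
\begin{equation*}
  \mu\{x \st \abs{\log\norm{\Lambda^k M^n(x)} - nk\lambda_1} \geq n\epsilon'\} \leq C e^{-C^{-1}n}.
\end{equation*}

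Finally I would transfer this to $M$. The upper bound $\mu\{x \st \log\norm{M^n(x)} \geq n\lambda_1 + n\epsilon\} \leq C e^{-C^{-1}n}$ holds for any continuous cocycle by Proposition~\ref{prop:upper} with $i=1$. For the lower bound, writing $s_1(x,n) \geq \dots \geq s_d(x,n)$ for the singular values of $M^n(x)$, one has $\norm{\Lambda^k M^n(x)} = s_1 \dotsm s_k \leq s_1^k = \norm{M^n(x)}^k$, hence $\log\norm{M^n(x)} \geq \tfrac1k \log\norm{\Lambda^k M^n(x)}$. Thus $\{x \st \log\norm{M^n(x)} \leq n\lambda_1 - n\epsilon\} \subseteq \{x \st \log\norm{\Lambda^k M^n(x)} \leq nk\lambda_1 - nk\epsilon\}$, whose measure is exponentially small by the displayed bound with $\epsilon' = k\epsilon$. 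Combining the two bounds yields exponential large deviations for the top exponent of $M$. The argument is little more than bookkeeping once Lemma~\ref{lem:strongly_irreducible2} is in hand; the only point requiring care — and the reason the precise value of $k$ matters — is verifying that the $k$-th exterior power preserves the class of locally constant cocycles over $(T,\mu)$ and turns the top exponent into a simple one.
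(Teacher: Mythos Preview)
The proposal is correct and follows essentially the same approach as the paper: pass to the $k$-th exterior power $\Lambda^k M$ (with $k$ the multiplicity of $\lambda_1$), observe it is still locally constant with now simple top exponent $k\lambda_1$, apply Lemma~\ref{lem:strongly_irreducible2}, and transfer back via the singular-value inequality $\norm{\Lambda^k M^n} \leq \norm{M^n}^k$. Your write-up is slightly more explicit in separating the upper bound (Proposition~\ref{prop:upper}) from the lower bound, but the argument is the same.
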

\begin{proof}
Consider a locally constant cocycle $M$ for which the multiplicity $d$ of the
top Lyapunov exponent is $>1$. Then the top Lyapunov exponent of $\Lambda^d
M$ is simple, equal to $d\lambda_1$. Moreover, for any matrix $A$ (with
singular values $s_1 \geq s_2 \geq \dotsc$), we have $\norm{A}^d =s_1^d \geq
\norm{\Lambda^d A}=s_1 \dotsm s_d$. Thus,
\begin{equation*}
  \{x \st \log \norm{M^n(x)} \leq n \lambda_1(M) - n\epsilon \}
  \subseteq \{ x \st \log \norm{\Lambda^d  M^n(x)}
    \leq n \lambda_1(\Lambda^d  M) - nd\epsilon\}.
\end{equation*}
The last set has an exponentially small measure by
Lemma~\ref{lem:strongly_irreducible2}, as $\Lambda^d M$ has a simple top
Lyapunov exponent by construction, and is locally constant. The desired bound
follows for $M$.
\end{proof}

\begin{proof}[Proof of Theorem~\ref{thm:large_deviations}~(4)]
\label{proof:4} Proving exponential large deviations for the cocycle $M$ and
some index $i$ amounts to proving exponential large deviations for $\Lambda^i
M$ and its top Lyapunov exponent. Hence, the theorem follows from
Lemma~\ref{lem:strongly_irreducible3}.
\end{proof}

The rest of this paragraph is devoted to the proof of
Theorem~\ref{thm:strongly_irreducible}. The proof follows the classical
strategy of Guivarc'h Le Page for products of independent matrices (with the
uniqueness of the $u$-state replacing the uniqueness of the stationary
measure), although the technical details of the implementation are closer for
instance to~\cite[Proof of Theorem~1]{dolgopyat_limit}.

Henceforth, we fix a transitive subshift of finite type $T:\Sigma \to \Sigma$
with a Gibbs measure $\mu$, and a continuous cocycle $M:E \to E$ above $T$
which admits a unique $u$-state denoted by $\nu_u$. Changing coordinates in
$E$ using the unstable holonomy, we can assume without loss of generality
that $M(x)$ only depends on the past $x_-$ of $x$.

We denote by $\Sigma_-$ the set of pasts of points in $\Sigma$. The left
shift $T$ does not induce a map on $\Sigma_-$ (it would be multivalued, since
there would be a choice for the zeroth coordinate), but the right shift
$T^{-1}$ does induce a map $U$ on $\Sigma-$. This is a subshift of finite
type, for which the induced measure $\mu_- = (\pi_{\Sigma \to \Sigma_-})_*
\mu$ is invariant (and a Gibbs measure).

The measure $\mu$ has conditional expectations $\mu_{x_-}^+$ above its factor
$\mu_-$: it can be written as $\mu = \mu_- \otimes \mu_{x_-}^+$. The family
of measures $\mu_{x_-}^+$ is canonically defined for all point $x_- \in
\Sigma_-$, and varies continuously with $x_-$, as we explained in
Paragraph~\ref{subsec:Gibbs} (for the opposite time direction).

To any point $(x,[v]) \in \Pbb(E)$, we associate a measure $\nu_{(x,[v])}$ on
$\Pbb(E)$ as follows. There is a canonical lift to $E$ of $W_{\loc}^u(x_-)$,
going through $v$, given by $\{(x_-, y_+, H^u_{(x_-,x_+) \to (x_-, y_+)} v
\}$. The measure $\mu_{x_-}^+$ on $W_{\loc}^u(x_-)$ can be lifted to this
set, giving rise after projectivization to the measure $\nu_{(x,[v])}$. This
measure is invariant under (projectivized) unstable holonomy, it projects to
$\mu_{x_-}^+$ under the canonical projection $\Pbb(E) \to \Sigma$, and it
projects to $\delta_{x_-}$ under the canonical projection $\Pbb(E) \to
\Sigma_-$. By construction, for any $x_-$, $x_+$ and $y_+$,
\begin{equation*}
  \nu_{(x_-, x_+, [v])} = \nu_{(x_-, y_+, [H^u_{(x_-, x_+) \to (y_-, y_+)} v])}.
\end{equation*}

More generally, finite averages or even integrals of such measures are again
$H^u$-invariant.

\medskip

There is a natural Markov chain on $\Sigma_-$, defined as follows. A point
$x_-$ has several preimages $y_-^i$ under $U$. By the invariance of the
measure $\mu_-$, the sum $1/J(y_-^i)$ is equal to $1$, where $J$ is the
jacobian of $U$ for $\mu_-$. Hence, one defines a Markov chain, by deciding
to jump from $x_-$ to $y_-^i$ with probability $1/J(y_i^i)$. The
corresponding Markov operator is given by
\begin{equation*}
  \boL v(x_-) = \sum_{U(y_-) = x_-} \frac{1}{J(y_-)} v (y_-).
\end{equation*}
This is simply the transfer operator of Paragraph~\ref{subsec:Gibbs} (for the
map $U$ instead of the map $T$). Replacing the potential $\phi$ which defines
the Gibbs measure by a cohomologous potential, we may write $\frac{1}{J(y_-)}
= e^{\phi(y)}$.

Correspondingly, we define an operator $\boM$ acting on measures on
$\Pbb(E)$, by
\begin{equation*}
  \boM \nu = (T_\Pbb)_* \nu.
\end{equation*}
It maps $\nu_{(x,[v])}$ (supported on the lift $V$ of $W_{\loc}^u(x_-)$
through $[v]$) to a measure supported on $T_\Pbb V$ (which is a lift of the
union of the unstable manifolds $W_{\loc}^u(y_-)$ for $U(y_-) = x_-$). Choose
on each of these submanifolds a point $(y_-, y_+, [v_y])$ (where $y_+$ is
arbitrary, and $[v_y]$ is the unique vector in $TV$ above $(y_-, y_+)$). Then
we have
\begin{equation}
\label{eq:iterate_boM}
  \boM \nu_{(x,[v])} = \sum e^{\phi(y_-)} \nu_{(y_-, y_+, [v_y])}.
\end{equation}
This follows from the equation~\eqref{eq:T-inv} for the evolution of the
conditional measures under the dynamics, and then the uniqueness of the
$H^u$-invariant lift.

\begin{prop}
\label{prop:limboMn} Let $f$ be a continuous function on $\Pbb(E)$. Then,
uniformly in $x \in \Sigma$ and $v \in \Pbb(E)$, when $N \to \infty$,
\begin{equation*}
   \frac{1}{N} \sum_{n=0}^{N-1}\int f \dd \boM^n \nu_{(x,[v])} \to \int f \dd \nu_u,
\end{equation*}
where $\nu_u$ is the unique invariant $u$-state of $M$.
\end{prop}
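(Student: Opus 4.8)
The plan is to identify, via a compactness argument, every weak-$*$ limit point of the Cesàro averages $\rho_N^{(x,v)} \coloneqq \frac1N\sum_{n=0}^{N-1}\boM^n\nu_{(x,[v])}$ with the unique invariant $u$-state $\nu_u$. Suppose the convergence in the statement fails to be uniform. Then there are $\epsilon>0$, a continuous $f$ on $\Pbb(E)$, and sequences $(x_k,[v_k])\in\Pbb(E)$ and $N_k\to\infty$ with $\abs{\int f\dd\rho_{N_k}^{(x_k,v_k)}-\int f\dd\nu_u}\geq\epsilon$; since $\Pbb(E)$ is compact, after extraction the probability measures $\rho_{N_k}^{(x_k,v_k)}$ converge weakly to some $\rho_\infty$ with $\rho_\infty\neq\nu_u$, so it will suffice to prove that $\rho_\infty$ is an invariant $u$-state. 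Throughout I work in the trivialization fixed before the proposition, in which $H^u$ is the identity and $M(x)$ depends only on $x_-$; then $\nu_{(x,[v])}$ depends only on $(x_-,[v])$, I write it $\nu_{x_-,[v]}$, and as a measure on $\Pbb(E)=\Sigma\times\Pbb(\R^d)$ it equals $\mu^+_{x_-}\otimes\delta_{[v]}$.

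The first thing to check is that $\rho_\infty$ is $T_\Pbb$-invariant: this is the Krylov--Bogolyubov argument, using that $\boM\rho_{N_k}^{(x_k,v_k)}-\rho_{N_k}^{(x_k,v_k)}=\frac1{N_k}(\boM^{N_k}\nu_{(x_k,[v_k])}-\nu_{(x_k,[v_k])})$ has total variation $\leq 2/N_k$ and that $\boM=(T_\Pbb)_*$ is weakly continuous (as $T_\Pbb$ is continuous), so passing to the limit gives $(T_\Pbb)_*\rho_\infty=\rho_\infty$. The second, and main, step is to show that $\rho_\infty$ is \emph{holonomy-adapted}, i.e.\ of the form $\int\nu_{z_-,[w]}\dd\bar\rho(z_-,[w])$ for some probability measure $\bar\rho$ on $\Sigma_-\times\Pbb(\R^d)$. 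Iterating~\eqref{eq:iterate_boM} exhibits each $\boM^n\nu_{(x,[v])}$, hence each $\rho_N^{(x,v)}$, as a finite convex combination of measures $\nu_{z_-,[w]}$, so $\rho_N^{(x,v)}=\Theta(\bar\rho)$ where $\Theta(\bar\rho)\coloneqq\int\nu_{z_-,[w]}\dd\bar\rho$; and $\Theta$ is weakly continuous, since $\int f\dd\nu_{z_-,[w]}=\int f((z_-,y_+),[w])\dd\mu^+_{z_-}(y_+)$ depends continuously on $(z_-,[w])$ by the continuity of $z_-\mapsto\mu^+_{z_-}$ recalled in Paragraph~\ref{subsec:Gibbs}. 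As the domain of $\Theta$ is compact, its image is weakly closed, so $\rho_\infty=\Theta(\bar\rho_\infty)$ for some $\bar\rho_\infty$.

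The third step is that $\rho_\infty$ projects to $\mu$ on $\Sigma$. The $\Sigma$-marginal of $\nu_{x_-,[v]}$ is $\mu^+_{x_-}$, so that of $\boM^n\nu_{(x,[v])}$ is $(T^n)_*\mu^+_{x_-}$; applying~\eqref{eq:T-inv} in the opposite time direction, one computes $(T^n)_*\mu^+_{x_-}=\sum_{U^n y_-=x_-}e^{S_n\phi(y_-)}\mu^+_{y_-}$, whence $\int g\dd(T^n)_*\mu^+_{x_-}=\boL^n G(x_-)$ with $G(y_-)\coloneqq\int g\dd\mu^+_{y_-}$ continuous. The Cesàro convergence of the transfer operator of the type~\eqref{eq:conv_boL}, here for the map $U$, then gives $\frac1N\sum_{n=0}^{N-1}\boL^n G\to\int G\dd\mu_-=\int g\dd\mu$ uniformly, so the $\Sigma$-marginal of $\rho_N^{(x,v)}$ tends uniformly to $\mu$ and $\rho_\infty$ projects to $\mu$. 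Comparing $\Sigma$-marginals in $\rho_\infty=\Theta(\bar\rho_\infty)$ then forces $(\pi_{\Sigma\to\Sigma_-})_*\bar\rho_\infty=\mu_-$; disintegrating $\bar\rho_\infty=\mu_-\otimes\kappa_{z_-}$, one gets $\rho_\infty=\int(\mu^+_{z_-}\otimes\kappa_{z_-})\dd\mu_-(z_-)$, a measure projecting to $\mu$ whose conditionals along unstable sets are $H^u$-invariant (they are $\kappa_{x_-}$, depending only on the past) — that is, an invariant $u$-state. By the uniqueness hypothesis $\rho_\infty=\nu_u$, contradicting $\rho_\infty\neq\nu_u$.

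The hard part will be the second step: isolating the correct weakly closed class of holonomy-adapted measures and verifying that weak limits stay inside it. This rests on the continuity of the conditional measures $\mu^+_{x_-}$, hence of the building blocks $\nu_{z_-,[w]}$, and it is exactly here that the argument is special to locally constant cocycles; for a merely fiber-bunched cocycle the analogous adapted measures need not form a closed class, which is the obstruction already flagged in Remark~\ref{rmk:not_locally_constant}.
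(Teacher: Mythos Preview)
Your proof is correct and follows the same strategy as the paper's: reduce by compactness to showing that every weak limit of the Cesàro averages is an invariant $u$-state, then verify invariance via Krylov--Bogolyubov, $H^u$-invariance from the structure of the building blocks $\nu_{(x,[v])}$, and the projection to $\mu$ via the transfer operator~\eqref{eq:conv_boL}. You are more explicit than the paper about why $H^u$-invariance survives the weak limit (your map $\Theta$ with closed image), whereas the paper dispatches this in one line (``the $H^u$ invariance also follows from the construction''); your care here is justified, since disintegrations are not weakly continuous in general.

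One small correction to your closing remark: this proposition and its proof do \emph{not} require local constancy---only invariant continuous holonomies, which let you trivialize so that $M$ depends on $x_-$ alone, and the continuity of $z_-\mapsto\mu^+_{z_-}$. The place where local constancy is genuinely needed (and where Remark~\ref{rmk:not_locally_constant} bites) is earlier, in the proof of Lemma~\ref{lem:hyperplane_0}, which feeds into the \emph{uniqueness} of the $u$-state, not into the convergence you are proving here.
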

\begin{proof}
It suffices to show that any weak limit $\nu_\infty$ of sequences of the form
\begin{equation*}
  \nu_N= \frac{1}{N} \sum_{n=0}^{N-1}\boM^n \nu_{(x_N,[v_N])}
\end{equation*}
(where $x_N$ and $[v_N]$ may vary with $N$) is an invariant $u$-state.

The invariance of the limiting measure is clear from the Cesaro-averaging and
the definition $\boM \nu = (T_\Pbb)_* \nu$. The $H^u$ invariance also follows
from the construction. It remains to show that $\nu_\infty$ projects to $\mu$
on $\Sigma$ or, equivalently, that it projects to $\mu_-$ on $\Sigma_-$.

The projection of $\nu_N$ on $\Sigma_-$ is the Cesaro average of $\sum_{U^n
y_- = (x_N)_-} e^{S_{-n} \phi(y_-)} \delta_{y_-}$, i.e., the position at time
$n$ of the Markov chain started from $x_N$ at time $0$. For any continuous
function $v$ on $\Sigma_-$, we get $\int v \dd\pi_* \nu_N = \frac{1}{N}
\sum_{n=0}^{N-1} \boL^n v((x_N)_-)$. By a classical property of transfer
operators (see~\eqref{eq:conv_boL}), this converges uniformly to $\int v
\dd\mu_-$. This proves that the only possible weak limit for $\pi_*(\nu_N)$
is $\mu_-$.
\end{proof}

Fix once and for all $\epsilon>0$, for which we want to prove the inequality
\begin{equation}
\label{eq:mu_to_prove}
  \mu\{x \st \log \norm{M^n(x)} \leq n(\lambda_1 - \epsilon)\} \leq C e^{-C^{-1}n}.
\end{equation}

\begin{lem}
\label{lem:g0} Define a function $g_0$ on  $\Pbb(E)$ by
\begin{equation*}
  g_0([x], v) = \log (\lambda_1 - \epsilon) - \log (\norm{M(x) v}/\norm{v} ),
\end{equation*}
where the last term in this formula does not depend on the choice of the lift
$v$ of $[v]$. Then there exist $N$ and $\alpha,\beta>0$ such that, for any
$x$ and $v$,
\begin{equation*}
  \int e^{\alpha S_N g_0} \dd \nu_{(x,[v])} \leq e^{-\beta N}.
\end{equation*}
\end{lem}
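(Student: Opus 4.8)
The plan is to run the classical ``negative drift implies exponential moment'' scheme, using the uniform Ces\`aro convergence of Proposition~\ref{prop:limboMn} where one would usually invoke a law of large numbers. Set $\phi_1(x,[v]) = \log(\norm{M(x)v}/\norm{v})$; this is well defined and continuous on the compact space $\Pbb(E)$, and bounded since $M(x)$ is continuous and invertible. Hence $g_0 = (\lambda_1-\epsilon) - \phi_1$ is continuous and bounded, say $\abs{g_0}\le K$. Along an orbit of $T_\Pbb$ the function $\phi_1$ telescopes, so $S_N g_0(x,[v]) = N(\lambda_1-\epsilon) - \log(\norm{M^N(x)v}/\norm{v})$, and each $\nu_{(x,[v])}$ is a probability measure on $\Pbb(E)$.

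The heart of the argument is a uniform negative drift. Since $\boM^n\nu = (T_\Pbb^n)_*\nu$, we have $\int S_N g_0\dd\nu_{(x,[v])} = \sum_{n=0}^{N-1}\int g_0\dd\boM^n\nu_{(x,[v])}$, so Proposition~\ref{prop:limboMn} applied to $f=g_0$ gives
\[
 \frac1N\int S_N g_0\dd\nu_{(x,[v])}\longrightarrow \int g_0\dd\nu_u = (\lambda_1-\epsilon)-\int\phi_1\dd\nu_u,
\]
uniformly in $x\in\Sigma$ and $v\in\Pbb(E)$. Everything then reduces to the Furstenberg-type identity $\int\phi_1\dd\nu_u=\lambda_1$. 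The bound $\int\phi_1\dd\nu_u\le\lambda_1$ is soft: $S_N\phi_1(x,[v])\le\log\norm{M^N(x)}$, so Kingman's theorem applied to the subadditive cocycle $\log\norm{M^N}$ over $(T,\mu)$ together with Birkhoff's theorem for $(T_\Pbb,\nu_u)$ forces the Birkhoff limit of $\phi_1$ to be $\le\lambda_1$, $\nu_u$-a.e. The reverse inequality is the classical statement that an invariant $u$-state is asymptotically carried by the top Oseledets direction: using the martingale description of $\nu_x$ from its future-conditional measures (see~\eqref{eq:nux_limit} and \cite{avila_viana_criterion}) and the fact that $M^n(T^{-n}x)$ acts on projective space as a strong contraction toward that direction, $\nu_u$-almost every $[v]$ satisfies $\frac1n\log\norm{M^n(x)v}/\norm v\to\lambda_1$, cf.~\cite{bonatti_viana_lyapunov, avila_viana_criterion}. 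Thus $\int g_0\dd\nu_u=-\epsilon<0$, and by the uniform convergence above there is an $N$ with $\int S_N g_0\dd\nu_{(x,[v])}\le-\epsilon N/2$ for every $x$ and $v$.

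It remains to upgrade this first-moment bound to an exponential moment. Fix such an $N$. Since $\abs{S_N g_0}\le NK$ pointwise, for any $\alpha>0$ with $\alpha NK\le 1$ we may apply the elementary inequality $e^s\le 1+s+s^2$ (valid for $\abs s\le 1$) with $s=\alpha S_N g_0$, obtaining
\[
 \int e^{\alpha S_N g_0}\dd\nu_{(x,[v])}\le 1+\alpha\int S_N g_0\dd\nu_{(x,[v])}+\alpha^2 N^2K^2\le 1-\tfrac12\alpha\epsilon N+\alpha^2 N^2K^2.
\]
Choosing $\alpha>0$ small enough (depending on $N$, $K$, $\epsilon$) that both $\alpha NK\le 1$ and $\alpha NK^2\le\epsilon/4$, the right-hand side is $\le 1-\tfrac14\alpha\epsilon N\le e^{-\alpha\epsilon N/4}$, which is the asserted bound with $\beta=\alpha\epsilon/4$.

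The only genuinely substantial step is the negative-drift estimate, and inside it the identity $\int\phi_1\dd\nu_u=\lambda_1$: the upper bound is routine, but the lower bound is exactly where uniqueness of the $u$-state is used — both to pin the value down to $\lambda_1$ (in general one could have invariant $u$-states concentrated on slower directions) and, through Proposition~\ref{prop:limboMn}, to make the convergence of the Birkhoff averages uniform over all $(x,[v])$ rather than merely $\nu_u$-almost everywhere. Once that is secured, the passage to an exponential moment is a routine second-order expansion relying only on boundedness of $g_0$.
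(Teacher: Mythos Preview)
Your proof is correct and follows essentially the same route as the paper's: establish $\int g_0\,d\nu_u<0$ via the Furstenberg identity, use Proposition~\ref{prop:limboMn} to obtain a uniform first-moment bound $\int S_N g_0\,d\nu_{(x,[v])}\le -cN$, and then convert this to an exponential-moment bound by a second-order Taylor expansion (the paper uses $e^t\le 1+t+t^2e^{|t|}$ rather than your $e^s\le 1+s+s^2$ for $|s|\le 1$, but since $S_N g_0$ is bounded both work equally well). Your discussion of the Furstenberg identity is somewhat more detailed than the paper's, which simply cites it.
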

\begin{proof}
Define a function $f_0$ on $\Pbb(E)$ by
\begin{equation*}
  f_0(x, [v]) = \log (\norm{M(x) v}/\norm{v} ).
\end{equation*}
The integral of $f_0$ with respect to the unique invariant $u$-state $\nu_u$
measures the average expansion of a vector in the maximally expanded
Oseledets subspace, which is by definition equal to the maximal Lyapunov
exponent $\lambda_1$. Hence, it is not difficult to check the following
formula, due to Furstenberg (see for instance~\cite[Proposition
6.5]{viana_lyapunov}):
\begin{equation*}
  \int f_0 \dd \nu_u = \log \lambda_1.
\end{equation*}
It follows that
\begin{equation*}
  \int g_0 \dd \nu_u = \log (\lambda_1 - \epsilon) - \log \lambda_1 < 0.
\end{equation*}
Fix some $c_0>0$ such that $\int g_0 \dd \nu_u < -c_0$. By
Proposition~\ref{prop:limboMn}, there exists an integer $N$ such that, for
any $x$ and $v$,
\begin{equation*}
  \frac{1}{N} \sum_{n=0}^{N-1}\int g_0 \dd \boM^n \nu_{(x,[v])} \leq -c_0.
\end{equation*}
By definition of $\boM$, we get
\begin{equation*}
  \int S_N g_0 \dd \nu_{(x,[v])} = \sum_{n=0}^{N-1}\int g_0 \dd \boM^n \nu_{(x,[v])} \leq -c_0 N.
\end{equation*}

Using the inequality $e^t \leq 1+t+t^2 e^{\abs{t}}$, we obtain for any
$\alpha \in (0,1)$
\begin{equation*}
  \int e^{\alpha S_N g_0} \dd \nu_{(x,[v])} \leq 1+\alpha \int S_N g_0 \dd \nu_{(x,[v])}
  + \alpha^2 \int (S_N g_0)^2 e^{\abs{S_N g_0}} \dd \nu_{(x,[v])}
  \leq 1 -\alpha c_0 N + \alpha^2 C,
\end{equation*}
where $C$ is a constant depending on $N$ but not on $\alpha$. (For the bound
in the last term, note that the function $S_N g_0$ is uniformly bounded, as a
continuous function on a compact space.) When $\alpha$ is small enough, the
term $\alpha^2 C$ is negligible. Hence, we obtain for small enough $\alpha$
and for $\beta = \alpha c_0/2$ the inequality
\begin{equation*}
  \int e^{\alpha S_N g_0} \dd \nu_{(x,[v])} \leq 1 - \beta N \leq e^{-\beta N}.
\qedhere
\end{equation*}
\end{proof}

\begin{lem}
\label{lem:exp_control} There exists a constant $C$ such that, for any $n \in
\N$ and any $x$ and $v$, one has
\begin{equation*}
  \int e^{\alpha S_n g_0} \dd \nu_{(x,[v])} \leq C e^{-\beta n}.
\end{equation*}
\end{lem}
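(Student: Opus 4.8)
The plan is to bootstrap the single-block estimate of Lemma~\ref{lem:g0} to an arbitrary number of iterates by a submultiplicativity-type argument over blocks of length $N$. Write $n = qN + r$ with $0 \le r < N$; since $g_0$ is uniformly bounded, the contribution of the last $r$ steps is $O(1)$, so it suffices to control $\int e^{\alpha S_{qN} g_0} \dd\nu_{(x,[v])}$. The key structural fact is that the measures $\nu_{(x,[v])}$ behave well under the operator $\boM$: by~\eqref{eq:iterate_boM}, $\boM^N \nu_{(x,[v])}$ is a convex combination (with weights $e^{S_{-N}\phi}$ summing to $1$, by invariance of $\mu_-$) of measures of the same form $\nu_{(y,[v_y])}$ at the preimages $y$.

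The main step is therefore to establish a one-step-of-length-$N$ inequality of the form
\begin{equation*}
  \int e^{\alpha S_{kN} g_0} \dd\nu_{(x,[v])} \le e^{-\beta N} \int e^{\alpha S_{(k-1)N} g_0} \dd\nu_{\boM^N\!,\,\text{avg}},
\end{equation*}
made precise as follows. Using the cocycle identity $S_{kN} g_0 = S_N g_0 + (S_{(k-1)N} g_0)\circ T_{\Pbb}^N$ and the fact that $\boM = (T_{\Pbb})_*$, one writes
\begin{equation*}
  \int e^{\alpha S_{kN} g_0} \dd\nu_{(x,[v])}
   = \int e^{\alpha S_N g_0(z)} \, e^{\alpha S_{(k-1)N} g_0(T_{\Pbb}^N z)} \dd\nu_{(x,[v])}(z).
\end{equation*}
The difficulty, and the place where one must be slightly careful, is that $S_N g_0$ and $S_{(k-1)N} g_0 \circ T_{\Pbb}^N$ are \emph{not} independent, so one cannot directly factor the integral. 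The remedy is to first integrate the ``tail'' factor: conditionally on the length-$N$ block, $T_{\Pbb}^N$ pushes $\nu_{(x,[v])}$ forward to a combination of measures $\nu_{(y,[v_y])}$, and one can bound $e^{\alpha S_{(k-1)N}g_0(T_{\Pbb}^N z)}$ by iterating the claimed estimate (induction on $k$) \emph{after} the push-forward, picking up a factor $Ce^{-\beta(k-1)N}$ uniformly in the base point $y$. What remains is then $\int e^{\alpha S_N g_0} \dd\nu_{(x,[v])}$ on the ``head'' block, which is $\le e^{-\beta N}$ by Lemma~\ref{lem:g0}. Thus the whole integral is $\le e^{-\beta N}\cdot C e^{-\beta(k-1)N} = C e^{-\beta k N}$, closing the induction.

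To make the conditioning rigorous I would phrase it via the operator $\boM$ directly: set $\Phi_m(x,[v]) = \int e^{\alpha S_m g_0}\dd\nu_{(x,[v])}$, observe that $\Phi$ transforms under $\boM$ roughly like $\Phi_{m+N}(x,[v]) \le (\sup e^{\alpha S_N g_0}\text{ weighted against }\boM^N) \cdot \Phi_m$, and then use that the weights of $\boM^N \nu_{(x,[v])}$ are exactly the transition probabilities of the Markov chain, so Lemma~\ref{lem:g0} gives precisely the decay $e^{-\beta N}$ per block of length $N$, uniformly in $(x,[v])$. Finally absorb the bounded remainder term ($0\le r<N$) and the finitely many small $n$ into the constant $C$. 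The main obstacle is purely bookkeeping: keeping the uniformity in $(x,[v])$ through the induction, which works precisely because Lemma~\ref{lem:g0} is already uniform and because the pushforward under $T_{\Pbb}^N$ lands on measures of exactly the same type $\nu_{(y,[v_y])}$, so the induction hypothesis applies verbatim at every preimage.
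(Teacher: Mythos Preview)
Your overall scheme (induct over blocks of length $N$, use that $\boM^N\nu_{(x,[v])}$ is a convex combination of measures of the same form, apply Lemma~\ref{lem:g0} on each block) is exactly the paper's approach. But your write-up skips the one non-bookkeeping step that makes the induction go through, and what you offer in its place does not work.

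You correctly flag that one cannot factor $\int e^{\alpha S_N g_0}\, e^{\alpha S_{(k-1)N} g_0\circ T_{\Pbb}^N}\dd\nu_{(x,[v])}$ directly. Your remedy is to push forward by $T_{\Pbb}^N$, bound the tail $\int e^{\alpha S_{(k-1)N} g_0}\dd\nu_{(y,[v_y])}$ by induction, and say that ``what remains'' is $\int e^{\alpha S_N g_0}\dd\nu_{(x,[v])}$. But after pushing forward you have $\sum_i c_i \int e^{\alpha S_N g_0(T_{\Pbb}^{-N}w)}\, e^{\alpha S_{(k-1)N} g_0(w)}\dd\nu_{(y_i,[v_{y_i}])}(w)$, and bounding the second factor in $L^1(\nu_{(y_i,[v_{y_i}])})$ says nothing about this product integral unless the first factor is \emph{constant} on the support of each $\nu_{(y_i,[v_{y_i}])}$. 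That constancy is the actual content of the proof: it holds because $M$ (hence $g_0$) depends only on the past $x_-$ in the chosen trivialization, and all points in the support of $\nu_{(y_i,[v_{y_i}])}$ share the same past $(y_i)_-$. The paper states this explicitly (``the term $e^{\alpha S_{nN} g_0 \circ T_{\Pbb}^{-nN}}$ is constant as $g_0$ and $M$ only depend on the past''); once you have it, the factorization is exact and the induction is one line. Your alternative phrasing via $\Phi_m$ and ``$\sup e^{\alpha S_N g_0}$ weighted against $\boM^N$'' does not fix this either: a supremum bound would not recover $e^{-\beta N}$, and Lemma~\ref{lem:g0} is an integral estimate, usable only after you have pulled the constant factor out. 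So the obstacle is not bookkeeping or uniformity in $(x,[v])$; it is precisely this past-measurability observation, which you should state and use.
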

\begin{proof}
It suffices to prove the lemma for times of the form $nN$, as the general
case only results in an additional multiplicative constant.

Fix some $n$. Iterating~\eqref{eq:iterate_boM}, $(T_{\Pbb}^{nN})_*
\nu_{(x,[v])}$ is a finite linear combination of measures of the form
$\nu_{(x_i, [v_i])}$, with some coefficients $c_i >0$ adding up to $1$. Then
\begin{equation*}
  \int e^{\alpha S_{(n+1)N} g_0} \dd \nu_{(x,[v])}
  = \sum_i c_i \int e^{\alpha S_{nN} g_0 \circ T_{\Pbb}^{-nN}} \cdot e^{\alpha S_N g_0} \dd\nu_{(x_i, [v_i])}.
\end{equation*}
In each of the integrals, the term $e^{\alpha S_{nN} g_0 \circ T^{-nN}}$ is
constant as $g_0$ and $M$ only depend on the past of points in $\Sigma$.
Hence, this integral is a constant multiple of $\int e^{\alpha S_N g_0}
\dd\nu_{(x_i, [v_i])}$, which is $\leq e^{-\beta N}$ by Lemma~\ref{lem:g0}.
We get
\begin{equation*}
  \int e^{\alpha S_{(n+1)N} g_0} \dd \nu_{(x,[v])}
  \leq e^{-\beta N} \sum_i c_i \int e^{\alpha S_{nN} g_0 \circ T^{-nN}}\dd\nu_{(x_i, [v_i])}
  = e^{-\beta N} \int e^{\alpha S_{nN} g_0} \dd \nu_{(x,[v])}.
\end{equation*}
The conclusion then follows by induction on $n$.
\end{proof}

\begin{proof}[Proof of Theorem~\ref{thm:strongly_irreducible}]
Fix some vector $v$. Then the average $\int \nu_{(x,[v])} \dd\mu(x)$ is a
measure on $\Pbb(E)$ that projects to $\mu$. If $\log \norm{M^n(y)} \leq n
\lambda_1(M) - n\epsilon$, then for any vector $w$ one has $\log(\norm{M^n(y)
w} / \norm{w}) \leq n(\lambda_1(M) - \epsilon)$, i.e., $S_n g_0(y, w) \geq
0$. We obtain
\begin{align*}
  \mu\{y \st \log \norm{M^n(y)} \leq n \lambda_1(M) - n\epsilon\}
  &\leq \int 1(S_n g_0(y,w) \geq 0) \dd \nu_{(x,[v])}(y,w) \dd\mu(x)
  \\&\leq \int \left( \int e^{\alpha S_n  g_0} \dd \nu_{(x,[v])}\right) \dd\mu(x).
\end{align*}
By Lemma~\ref{lem:exp_control}, the last integral is bounded by $C e^{-\beta
n}$. The upper bound~\eqref{eq:mu_to_prove} follows.
\end{proof}

\subsection{Proof of Theorem~\ref{thm:large_deviations}~(5)}
\label{proof:5}

Consider a cocycle $M$ admitting invariant continuous holonomies, which is
pinching and twisting in the sense of Avila-Viana. We want to show that it
admits exponential large deviations for all exponents.

\cite{avila_viana_criterion} shows that there is a unique invariant $u$-state
on $\Pbb(E)$, corresponding to the maximally expanded Oseledets subspace, see
the first lines of Section~7 in~\cite{avila_viana_criterion}. Hence,
Theorem~\ref{thm:strongly_irreducible} applies and shows that $M$ has
exponential large deviations for its top exponent.

To prove exponential large deviations for an exponent $i$, a natural strategy
would be to consider the cocycle $\Lambda^i M$ and prove that it has
exponential large deviations for its top Lyapunov exponent. However, there is
no reason why $\Lambda^i M$ should be twisting and pinching. What Avila and
Viana prove in~\cite{avila_viana_criterion}, however, is that $M$ has a
unique $u$-state on the Grassmannian of $i$-dimensional subspaces. All the
arguments in the proof of Theorem~\ref{thm:strongly_irreducible} go through
if one replaces everywhere the space $\Pbb(E)$ by the corresponding
Grassmannian. Then the Grassmannian version of
Theorem~\ref{thm:strongly_irreducible} shows that $M$ has exponential large
deviations for the exponent $i$.%
\qed

\subsection{Proof of Theorem~\ref{thm:large_deviations}~(6)}
\label{proof:6}

Consider a two dimensional cocycle $M$ admitting continuous holonomies, we
want to show that it satisfies exponential large deviations for all exponents
$i$. For $i=2$, the norm $\norm{\Lambda^i M(x)}$ is the absolute value of the
determinant of $M(x)$. The desired estimate~\eqref{eq:exp_dev_all_exp}
involves an additive cocycle, the Birkhoff sums of the continuous function
$\log \abs{\det M(x)}$. Hence,~\eqref{eq:exp_dev_all_exp} follows from the
large deviations estimate for Birkhoff sums.

The only non-trivial case is $i=1$, i.e., exponential large deviations for
$\norm{M^n(x)}$. If $M$ admits a unique invariant $u$-state on $\Pbb(E)$,
then the result follows from Theorem~\ref{thm:strongly_irreducible}, and we
are done. If the two Lyapunov exponents of $M$ are equal, then the result
follows from Theorem~\ref{thm:large_deviations}~(1). The last case is when
the Lyapunov exponents are distinct, but there are two different invariant
$u$-states. The non-uniqueness implies that something fails if we try to
follow the proof of Theorem~\ref{thm:u-state-unique}. The only place in the
proof of this theorem where we used the fact that the cocycle is locally
constant is in the proof of Lemma~\ref{lem:hyperplane_0}. Without this
assumption, the proof of this lemma constructs a family $\boV(x)$ of
subspaces (which are necessarily one-dimensional), invariant under the
dynamics and the stable holonomy, but not necessarily under the unstable
holonomy. In general, this prevents us from implementing the induction
argument of Lemma~\ref{lem:strongly_irreducible2} as the induced cocycle on
$\boV$ and the quotient cocycle do not admit invariant continuous holonomies
any more. However, in the specific case of $2$-dimensional cocycles, the
induced cocycles and the quotient cocycles are both $1$-dimensional.
Therefore, they satisfy exponential large deviations thanks to
Theorem~\ref{thm:large_deviations}~(1). Hence, the argument in
Lemma~\ref{lem:strongly_irreducible2} goes through to prove that the original
cocycle also satisfies exponential large deviations.
\qed

\section{Exponential returns to nice sets for subadditive cocycles}

The main statement of this section is the following theorem. Note that the
assumptions of the theorem ensure that the function $F$ below is finite
almost everywhere, although it can be infinite on points which are not
typical for $\mu$. We are trying to control how large it will be along
typical orbits, in a quantitative sense.

\begin{thm}
\label{thm:exp_returns} Let $T:X \to X$ be a continuous map preserving an
ergodic probability measure $\mu$ on a compact space. Consider a subadditive
cocycle $u:\N \times X \to \R$, such that $u(n,x)/n$ converges almost
everywhere to $0$, and $u(n, \cdot)$ is continuous for all $n$. Let also
$\epsilon>0$. Define a function
\begin{equation*}
  F(x) = \sup_{n\geq 0}\, \abs{u(n,x)} - \epsilon n.
\end{equation*}
Assume that $u$ has exponential large deviations, and that the Birkhoff sums
of continuous functions also have exponential large deviations.

Let $\delta>0$. Then there exists $C>0$ such that, for any $n\geq 0$,
\begin{equation*}
  \mu \{x \st \Card\{j \in [0, n-1] \st F(T^j x) > C\} \geq \delta n\}
  \leq C e^{-C^{-1}n}.
\end{equation*}
\end{thm}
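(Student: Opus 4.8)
The plan is to turn the pointwise definition $F(x) = \sup_{n \geq 0} \abs{u(n,x)} - \epsilon n$ into something that can be counted along orbits, by first understanding the structure of the set where $F$ is large. The key observation is that $F(T^j x) > C$ means there is some $n_j \geq 0$ with $\abs{u(n_j, T^j x)} > C + \epsilon n_j$. Since $\abs{u(n,\cdot)}$ is bounded for each fixed $n$ (continuous on a compact space) and $u(n,x)/n \to 0$ in $L^1$ by Kingman, for large $C$ the relevant $n_j$ cannot be too small (bounded below, say by the fact that $\sup_x \abs{u(n,x)}$ grows sublinearly) — more importantly, I would split into two contributions: the ``upper'' contribution $u(n, T^j x) > C + \epsilon n$ and the ``lower'' contribution $-u(n, T^j x) > C + \epsilon n$, i.e. $u(n, T^j x) < -C - \epsilon n$. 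Both are controlled by the exponential large deviations hypothesis, but in different ways because $u$ is only subadditive.

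For the upper contribution I would use Lemma~\ref{lem:anx_bound_Birkhoff} and Lemma~\ref{lem:upper_anx}: since $u$ has exponential large deviations, $\int u(n,\cdot)/n \, \dd\mu \to 0$, so picking $N$ large with $\int u_N/N \,\dd\mu$ small, we get $u(n,x) \leq S_n(u_N/N)(x) + C_0$ for all $n$ and $x$. Hence $u(n, T^j x) > C + \epsilon n$ forces $S_n(u_N/N)(T^j x) > C - C_0 + \epsilon n \geq \epsilon n/2$ once $C$ is large, and then a point $T^j x$ with a long forward block of positive Birkhoff average of a continuous function with small integral is exponentially rare; a maximal ergodic / Birkhoff large deviations argument (using the second hypothesis, that Birkhoff sums of continuous functions have exponential large deviations) bounds the measure of $x$ for which the number of such $j$ in $[0,n-1]$ exceeds $\delta n/2$. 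The lower contribution is the genuinely delicate one, since subadditivity gives no lower bound of Birkhoff type; here I would instead apply the exponential large deviations of $u$ itself directly: for each fixed $m$, $\mu\{x : u(m,x) < -\epsilon m /2\} \leq C e^{-C^{-1}m}$, and summing over $m \geq m_0$ these are uniformly summable, so by a Borel–Cantelli / maximal inequality argument the measure of points having ``many'' indices $j$ with some witness $m \geq m_0(C)$ is exponentially small. The condition $F(T^j x) > C$ with large witness forces $m_j \geq m_0(C)$, so only the tail matters.

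The main obstacle is precisely combining these two per-index estimates into a statement about the \emph{count} of bad indices $j \in [0,n-1]$ being at least $\delta n$ with exponentially small probability — i.e. one needs a second-level large deviations statement (``exponentially few points have a linear number of bad returns''), not just a first-level one (``each bad event is exponentially rare''). For the upper part this is handled by the hypothesis on Birkhoff sums of continuous functions applied to an indicator-like majorant, but indicators are not continuous, so I would approximate: replace $\mathbf{1}\{F(T^j x) > C\}$ by a continuous function sandwiched between indicators at levels $C$ and $2C$, use that $\mu\{F > C\} \to 0$ as $C \to \infty$ (from almost-sure finiteness of $F$), choose $C$ so this mass is below $\delta/2$, and then $\frac1n \Card\{j : F(T^j x) > C\}$ exceeding $\delta$ implies a Birkhoff sum of that continuous function exceeds its integral by $\delta/2$, which is exponentially rare. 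For the lower/tail part, the count of indices $j \leq n-1$ with a large witness $m_j$ is itself bounded (via subadditivity: a single long stretch of bad behavior, when averaged, contradicts $u(n,\cdot)/n \to 0$) by something like $\sum_{m \geq m_0} \mathbf{1}\{\text{bad at scale } m \text{ somewhere near } j\}$, and one invokes the exponential decay in $m$ together with a covering/counting argument. Carefully bookkeeping the dependence of $m_0$ on $C$ and ensuring the two exponential rates can be taken uniform is where the technical work lies; once the two pieces are in place, the union bound gives the claimed $C e^{-C^{-1} n}$.
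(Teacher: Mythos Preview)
Your split into upper and lower contributions and the reduction of the upper part to Birkhoff sums via Lemma~\ref{lem:anx_bound_Birkhoff} are both correct and match the paper. The genuine gap is the passage from first-level to second-level large deviations, and neither of your two proposed mechanisms closes it. For the continuous-approximation idea: $F$ is only lower semicontinuous (a supremum of continuous functions), so $\{F>C\}$ is open and may well be dense even when $\mu\{F>C\}$ is tiny; any continuous $g\geq \mathbf{1}\{F>C\}$ is then forced to satisfy $g\geq 1$ on $\overline{\{F>C\}}$, and there is no reason $\int g$ should be small. Sandwiching between levels $C$ and $2C$ does not help, since $\overline{\{F>2C\}}$ need not be contained in $\{F>C\}$. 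For the Borel--Cantelli/covering idea on the lower contribution: the per-scale bound $\mu\{u(m,\cdot)<-\epsilon m/2\}\leq Ce^{-m/C}$, combined with a union bound or Markov's inequality on $\sum_{j<n}\sum_{m\geq m_0}\mathbf{1}\{\cdot\}$, yields a bound of order $\delta^{-1}\sum_{m\geq m_0}Ce^{-m/C}$, a constant in $n$, not $e^{-cn}$. The events at different $j$ are far from independent, so no Chernoff-type improvement is available.

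The idea you are missing is the interval-decomposition argument that the paper isolates as Lemma~\ref{lem:exp_returns}. One works with the superadditive cocycle $v(n,x)=-u(n,x)-n\epsilon$ (asymptotic average $-\epsilon$) and its maximal function $G(x)=\sup_{n}v(n,x)$. If $G(T^jx)>2C_0$ at $\geq\delta n$ indices, one \emph{uses the witnesses themselves} to partition $[0,n)$: starting at such a $j$, pick $n_j$ with $v(n_j,T^jx)>2C_0$ and jump to $j+n_j$; otherwise advance by $1$. Superadditivity then gives $v(n,x)\geq \sum_{U^+}2C_0+\sum_{U^-}v(\cdot)$, and the $U^-$ contribution is controlled by Birkhoff sums of $v_N/N$ via Lemma~\ref{lem:anx_bound_Birkhoff}. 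After bookkeeping one obtains $v(n,x)\geq -n\epsilon + c\delta\epsilon n$, which is an event of exponentially small probability by the first-level large deviations assumption on $v$ itself. Thus the second-level statement is obtained by feeding the many witnesses back into a \emph{single} application of the hypothesis at time $n$, not by summing per-index or per-scale bounds. The same lemma, applied to the additive cocycle $S_n(u_N/N-\epsilon)$, handles your upper contribution $I$ as well.
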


In the applications we have in mind, $u$ will be of the form $u(n,x) = \log
\norm{\Lambda^i M^{(n)}(x)}-n(\lambda_1+\dotsb+\lambda_i)$, for some cocycle
$M$ with Lyapunov exponents $\lambda_k$. The points where $F(x) \leq C$ are
the points where all the iterates of the cocycle are well controlled.
Essentially, they belong to some Pesin sets (see
Proposition~\ref{thm:deterministic_Oseledets} below for a precise version of
this statement). Hence, the lemma will imply that most iterates of a point
return often to Pesin sets, if the matrix cocycle has exponential large
deviations for all exponents.

The proof is most conveniently written in terms of superadditive cocycles.
Note that, in the lemma below, the definition of $G$ resembles that of $F$ in
the theorem above, except for the lack of absolute value. Hence, the
following lemma applied to $v(n,x) = -u(n,x)-n\epsilon$ proves one of two
inequalities in Theorem~\ref{thm:exp_returns}.

\begin{lem}
\label{lem:exp_returns} Let $T:X \to X$ preserve an ergodic probability
measure $\mu$ on a compact space. Consider a superadditive cocycle $v:\N
\times X \to \R$, such that $v(n,x)/n$ converges almost everywhere to
$-\epsilon<0$, and $v(n, \cdot)$ is continuous for all $n$. Define a function
\begin{equation*}
  G(x) = \sup_{n\geq 0} v(n,x).
\end{equation*}
Assume that $v$ satisfies exponential large deviations, and that the Birkhoff
sums of continuous functions also satisfy exponential large deviations.

Let $\delta>0$. Then there exists $C>0$ such that, for any $n\geq 0$,
\begin{equation*}
  \mu \{x \st \Card\{j \in [0, n-1] \st G(T^j x) > C\} \geq \delta n\}
  \leq C e^{-C^{-1}n}.
\end{equation*}
\end{lem}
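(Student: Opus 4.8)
The plan is to decompose the supremum defining $G$ into a bounded "short-range" part and an exponentially small "long-range" part. First I would fix a large integer $N$ to be chosen later, and split
\begin{equation*}
  G(x) = \max\Bigl( G_{\leq N}(x),\ G_{>N}(x)\Bigr),
  \qquad G_{\leq N}(x) = \max_{0\leq n\leq N} v(n,x), \quad G_{>N}(x)=\sup_{n>N} v(n,x).
\end{equation*}
The short-range part $G_{\leq N}$ is a continuous function, hence bounded on the compact space $X$ by some constant $C_N$; thus $\{x : G(T^jx) > C\}$ with $C>C_N$ forces $G_{>N}(T^jx)>C$, i.e.\ there exists $n>N$ with $v(n,T^jx)>C$. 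The idea is that for such an $n$, superadditivity of $v$ together with the drift $v(n,x)/n\to -\epsilon$ means $v(n,T^jx)$ being large is a genuine large-deviation event whose frequency along an orbit we can bound using the exponential large deviations hypothesis for $v$ itself, and (for bookkeeping of where along the orbit the "bad" blocks sit) the exponential large deviations for Birkhoff sums of a suitable continuous function.

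The key steps, in order. (1) Reduce as above to controlling, for a well-chosen continuous $v(N,\cdot)$ and a well-chosen constant, the set of $j\in[0,n-1]$ at which there is some $m>N$ with $v(m,T^jx) > C$. (2) Use superadditivity: if $v(m, T^j x)$ is large for some $m > N$, then by superadditivity along a decomposition of the interval $[j, j+m)$ into blocks of length $N$ (plus a bounded remainder), the Birkhoff-type sum $\sum v(N, T^{j+kN}x)$ over those blocks must be large, so at least a definite fraction of those blocks $k$ satisfy $v(N, T^{j+kN} x) \geq (\int v(N,\cdot)\,d\mu)/2$ or some threshold strictly above the mean $\approx -N\epsilon$; this converts the problem into counting how often the continuous function $v(N,\cdot)$ exceeds a value strictly above its integral, which is exactly an exponential large deviations statement for Birkhoff sums. (3) Choose $N$ large enough (using $\int v(N,\cdot)\,d\mu / N \to -\epsilon$ and the exponential large deviations for $v$) so that the bad frequency forced by step (2) is below $\delta$; conclude that the event in the statement is contained in an exponentially small set, combining the two large-deviations inputs via a union bound over $O(1)$ events (or over a geometric series in the block scale).

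The main obstacle I expect is step (2): making precise the passage from "$v(m,x)$ large for a single long $m$" to "a positive density of length-$N$ subblocks of the orbit are themselves atypical", uniformly in $m>N$ and in the starting point, while keeping the counting over $j\in[0,n-1]$ under control (a bad $j$ could be "explained" by a long block extending far to its right, so one must be careful not to double-count and to absorb boundary effects). The natural device is to argue by contradiction: if more than $\delta n$ indices $j$ had $G(T^jx)>C$, then a covering/greedy argument produces disjoint long blocks covering a definite fraction of $[0,n]$, on each of which $v$ is large; superadditivity then forces $v(n', x)$ to be large for $n'$ comparable to $n$, or forces many bad length-$N$ subblocks, either of which contradicts one of the two exponential large deviations hypotheses. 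I would also need the elementary remark, used implicitly, that the frequency of $j$ with $G_{>N}(T^jx)>C$ is controlled by the frequency of $j$ with $v(m,T^jx)>C$ for the \emph{specific} scales $m$ that a greedy cover selects, so that only finitely many large-deviation estimates (one per scale, summed geometrically) enter.
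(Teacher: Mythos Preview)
Your step~(2) has the superadditivity inequality pointing the wrong way. Decomposing $[j,j+m)$ into length-$N$ blocks gives $v(m,T^jx)\geq\sum_k v(N,T^{j+kN}x)$ (up to a bounded remainder), so $v(m,T^jx)$ being large says nothing about the block sum being large --- it could be very negative. Hence you cannot convert a single long bad block into ``many length-$N$ subblocks are atypical for $v(N,\cdot)$'', and the main mechanism of step~(2) fails as written.

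The correct route is the one you sketch only as a fallback in your last paragraph, and it is exactly what the paper does. For $x$ with at least $\delta n$ bad times, first discard an exponentially small set where some bad block has length $\geq\delta n/2$ (this uses large deviations for $v$). Then run the greedy cover: at each bad time $t_k$ pick $n_k$ with $v(n_k,T^{t_k}x)>2C_0$ and jump by $n_k$; otherwise step by $1$. This partitions $[0,n)$ into bad intervals $A^+$ (total length $\geq\delta n/2$) and good intervals $A^-$. Superadditivity in the \emph{correct} direction now gives a lower bound $v(n,x)\geq\sum_{U^+}2C_0+\sum_{J}v(n'_j,T^{t'_j}x)$, and on each good interval Lemma~\ref{lem:anx_bound_Birkhoff} (applied to $-v$) gives $v(n'_j,\cdot)\geq S_{n'_j}(v_N/N)-C_0$. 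Since the number of good intervals is at most $2\Card U^+$, the $C_0$ terms cancel; choosing $N$ so that $\int\min(v_N/N+\epsilon,0)\geq-\delta\epsilon/10$ and invoking Birkhoff large deviations for this continuous function yields $v(n,x)\geq -\epsilon n+c\,\delta\epsilon n$ off an exponentially small set. But $v(n,x)$ exceeding its mean by a linear amount is itself a large-deviation event for $v$, hence exponentially small. So: promote your greedy-cover-to-lower-bound-on-$v(n,x)$ idea from fallback to the main argument.
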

\begin{proof}
When $N$ tends to $+\infty$, the sequence $v_N/N$ tends almost surely to
$-\epsilon$. The convergence also holds in $L^1$ by Kingman's theorem. Then
$v_N/N + \epsilon$ tends almost surely and in $L^1$ to $0$. Then $\min(v_N/N
+ \epsilon, 0)$ tends almost surely and in $L^1$ to $0$. Thus, we can take
once and for all a large enough $N$ so that
\begin{equation}
\label{eq:int_w}
  \int \min(v_N/N + \epsilon, 0) \geq - \delta \epsilon/10.
\end{equation}
Let $w = v_N/N$.

By Lemma~\ref{lem:anx_bound_Birkhoff} applied to the subadditive cocycle
$-v$, there exists a constant $C_0>0$ such that $v(n,x) \geq S_n w (x) -
C_0$, for any $x\in X$ and any $n\in \N$. We will show that
\begin{equation*}
  \mu \{x \st \Card\{j \in [0, n-1] \st G(T^j x) > 2C_0\} \geq \delta n\}
\leq C e^{-C^{-1}n}.
\end{equation*}

Assume first that $x$ has an iterate where the cocycle is large along an
extremely long interval, i.e., $x$ belongs to
\begin{equation*}
  K_n = \bigcup_{t=0}^{n-1} (T^t)^{-1} \{y \st \exists j \geq \delta n/2, v(j,y) > 0\}.
\end{equation*}
As $v$ has exponential large deviations and converges to a negative constant,
the last set has a measure which is exponentially small in $n$. As $T$ is
measure-preserving, it follows that $\mu(K_n)$ is also exponentially small.

Consider now $x\notin K_n$ such that $\Card\{j \in [0, n-1] \st G(T^j x)
> 2C_0\} \geq \delta n$. Then
\begin{equation}
\label{eq:decompose_pos}
  \Card\{j \in [0, n-1-\delta n/2] \st G(T^j x) > 2C_0\} \geq \delta n/2.
\end{equation}

We define inductively a sequence of times $t_k$ as follows. We start from
$t_0=0$. If $G(T^{t_k} x) > 2C_0$ and $t_k\leq n-1 -\delta n/2$, then we say
that $t_k$ belongs to the set $U^+$ of sum-increasing times. In this case, we
can choose $n_k>0$ such that $v(n_k, T^{t_k}x) > 2C_0$, by definition of $H$.
Then we let $t_{k+1} = t_k+n_k$. Otherwise, we say that $t_k$ belongs to the
set $U^-$ of sum-decreasing times, and we let $t_{k+1} = t_k+1$. We stop at
the first $t_j$ where $t_j \geq n$.

Let $A^+ = \bigcup_{t_k\in U^+} [t_k, t_{k+1})$, and $A^- = [0,n-1] \setminus
A^+$. As $x \notin K_n$, the lengths $n_k=t_{k+1}-t_k$ when $t_k \in U^+$ are
all bounded by $\delta n/2$. Hence, $A^+$ is included in $[0,n-1]$. Moreover,
the set of bad times, on the left of~\eqref{eq:decompose_pos}, is included in
$A^+$. Therefore, $\Card A^+ \geq \delta n/2$, and $\Card A^- \leq
(1-\delta/2)n$.

We will also need to write the set $A^-$ as a union of intervals $\bigcup
[t'_j, t'_j + n'_j)$ over some index set $J$, i.e., we group together the
times in $U^-$ that are not separated by times in $U^+$.

Using the decomposition of $[0,n-1]$ as $A^+ \cup A^-$, the decomposition of
these sets into intervals, and the superadditivity of the cocycle, we obtain
the inequality
\begin{equation*}
  v(n, x) \geq \sum_{t_k \in U^+} v(n_k, T^{t_k} x) + \sum_{j\in J} v(n'_j, T^{t'_j}x)
  \geq \sum_{t_k \in U^+} 2C_0 + \sum_{j\in J} v(n'_j, T^{t'_j}x),
\end{equation*}
where the last inequality follows from the definition of $U^+$. Note that the
right endpoint of an interval in $A^-$ belongs to $U^+$, except for the last
interval. It follows that $\Card J \leq \Card U^+ + 1 \leq 2 \Card U^+$.
Hence, the above inequality implies
\begin{equation*}
  v(n, x) \geq \sum_{j\in J} (C_0 + v(n'_j, T^{t'_j}x)).
\end{equation*}
Together with the definition of $C_0$, this gives
\begin{equation*}
  v(n, x) \geq \sum_{j\in J} S_{n'_j} w(T^{t'_j}x)
  = \sum_{k\in A^-} w(T^k x).
\end{equation*}
Now, let us introduce $\epsilon$:
\begin{align*}
  v(n, x) & \geq \sum_{k\in A^-} (w(T^k x) + \epsilon) - \epsilon \Card(A^-)
  \\& \geq \sum_{k\in [0,n-1]} \min(w(T^k x) + \epsilon, 0) - \epsilon \Card(A^-)
  \\& \geq \sum_{k\in [0,n-1]} \min(w(T^k x) + \epsilon, 0) - \epsilon (1-\delta/2)n,
\end{align*}
where the last inequality holds as $\Card A^- \leq (1-\delta/2)n$.

The continuous function $x\mapsto \min(w(x) + \epsilon, 0)$ has exponential
large deviations and integral $\geq -\delta\epsilon/10$ by ~\eqref{eq:int_w}.
Hence, we have $\sum_{k\in [0,n-1]} \min(w(T^k x) + \rho, 0) \geq - n
\delta\epsilon/5$ apart from an exponentially small set. Apart from this set,
we obtain
\begin{equation*}
  v(n,x) \geq -\epsilon n + (\delta/2-\delta/5)\epsilon n.
\end{equation*}
As $v$ has exponential large deviations and asymptotic average $-\epsilon$,
it follows that this condition on $x$ has exponentially small measure.
\end{proof}

\begin{proof}[Proof of Theorem~\ref{thm:exp_returns}]
The function $F$ is the maximum of the two functions
\begin{equation*}
  H(x) = \sup_{n\geq 0} -u(n,x) - n\epsilon,\quad I(x) = \sup_{n\geq 0} u(n,x) - n\epsilon.
\end{equation*}
We should show that each of these functions satisfies the conclusion of the
theorem. For $H$, this follows from Lemma~\ref{lem:exp_returns} applied to
$v(n,x) = -u(n,x) -n\epsilon$.

For $I$, let us consider $N>0$ such that $u_N/N$ has integral $<\epsilon/2$.
By Lemma~\ref{lem:anx_bound_Birkhoff}, there exists a constant $C_0$ such
that $u(n,x) \leq S_n(u_N/N) + C_0$ for all $n$. Let $w = u_N/N-\epsilon$.
Lemma~\ref{lem:exp_returns} applied to the cocycle $S_n w$ shows that, for
some constant $C_1>0$,
\begin{equation*}
  \mu \{x \st \Card\{j \in [0, n-1] \st \sup_n S_n w(T^j x) > C_1\} \geq \delta n\}
  \leq C e^{-C^{-1}n}.
\end{equation*}
If $u(n,x)-n\epsilon \geq C_0+C_1$, then $S_n w(x) \geq C_1$. Hence, the
control on $I$ follows from the previous equation.
\end{proof}

\section{A deterministic control on the Pesin function}

An important difficulty to prove Theorem~\ref{thm:exp_returns_Pesin} is that
the Pesin function $A_\epsilon$ is defined in terms of the Oseledets
subspaces $E_i(x)$, which vary only measurably with the point and for which
we have no good control. On the other hand, Theorem~\ref{thm:exp_returns}
provides exponentially many returns for sets defined in terms of functions
for which we have good controls, e.g., Birkhoff sums of continuous functions
(by the large deviation principle) or norms of linear cocycles (if one can
prove exponential large deviations for them, using for instance
Theorem~\ref{thm:large_deviations}). Our goal in this section is to explain
how controls on such quantities imply controls on the Pesin function
$A_\epsilon$. Then, Theorem~\ref{thm:exp_returns_Pesin} will essentially
follow from Theorem~\ref{thm:exp_returns}. To prove such a result, we need to
revisit the proof of Oseledets theorem and replace almost sure controls with
more explicit bounds.

Consider an invertible map $T:X\to X$ preserving a probability measure $\mu$,
and a log-integrable linear cocycle $M$ above $T$ on $X \times \R^d$. Let
$\lambda_1 \geq \dotsb \geq \lambda_d$ be its Lyapunov exponents, let $I=\{i
\st \lambda_i<\lambda_{i-1}\}$ be a set of indices for the distinct Lyapunov
exponents, let $E_i$ be the Lyapunov subspaces.

Given $\epsilon>0$, define functions
\begin{align}
  \notag
  B^+_\epsilon(x) & = \sup_{i\in [1,d]} B^{(i)+}_\epsilon = \sup_{i\in [1,d]} \sup_{n\geq 0}\,
                  \abs{ \log \norm{\Lambda^i M^{(n)}(x)} - n(\lambda_1+\dotsb+\lambda_i)} - n\epsilon,
  \\ \notag
  B^-_\epsilon(x) & = \sup_{i\in [1,d]} B^{(i)-}_\epsilon = \sup_{i\in [1,d]} \sup_{n\leq 0}\,
                  \abs{ \log \norm{\Lambda^i M^{(n)}(x)} - n (\lambda_d+\dotsb+\lambda_{d-i+1})} - \abs{n}\epsilon
\intertext{and}
  \label{eq:def_B_epsilon}
  B_\epsilon(x) & = \max(B^+_\epsilon(x), B^-_\epsilon(x)).
\end{align}
These are the functions we can control using the tools of the previous
sections.

The following proposition asserts that a control on $B_\epsilon$ and a mild
control on angles implies a control on $A_{\epsilon'}$ for $\epsilon' =
20d\epsilon$. For $i\in I$, let us denote by $F^{(m)}_{\geq i}(x)$ the
maximally contracted subspace of $M^{(m)}(x)$ of dimension $d-i+1$, and by
$F^{(-m)}_{<i}(x)$ the maximally contracted subspace of $M^{(-m)}(x)$ of
dimension $i$, if these spaces are uniquely defined, as in the statement of
Theorem~\ref{thm:Oseledets_limit}.

\begin{thm}
\label{thm:deterministic_Oseledets} Assume that $\norm{M(x)}$ and
$\norm{M(x)^{-1}}$ are bounded uniformly in $x$. Consider $\epsilon \in (0,
\min_{i\neq j \in I} \abs{\lambda_i-\lambda_j}/(20d))$ and $\rho>0$ and
$C>0$. Then there exist $m_0\in \N$ and $D>0$ with the following properties.

Consider a point $x$ satisfying $B_{\epsilon}(x) \leq C$. Then its subspaces
$F^{(n)}_{\geq i}(x)$ and $F^{(-n)}_{\leq i}(x)$ are well defined for all $n
\geq m_0$, and converge to subspaces $F^{(\infty)}_{\geq i}(x)$ and
$F^{(-\infty)}_{\leq i}(x)$.

Assume additionally that, for all $i\in I$, there exists $m \geq m_0$ such
that the angle between $F^{(m)}_{\geq i}(x)$ and $F^{(-m)}_{<i}(x)$ is at
least $\rho$. Then the Oseledets subspace $E_i(x) = F^{(\infty)}_{\geq i}(x)
\cap F^{(-\infty)}_{\leq i}(x)$ is a well-defined $d_i$-dimensional space for
all $i \in I$. Moreover, the function $A_{20d\epsilon}(x)$ (defined
in~\eqref{eq:def_A_epsilon} in terms of these subspaces) satisfies
$A_{20d\epsilon}(x) \leq D$.
\end{thm}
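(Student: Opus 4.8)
The plan is to redo the construction of the Oseledets splitting quantitatively, feeding in only the crude but explicit information contained in $B_\epsilon(x)\le C$. As a first step I would extract control on the individual singular values $t_1^{(n)}(x)\ge\dots\ge t_d^{(n)}(x)$ of $M^n(x)$: since $\log(t_1^{(n)}\dotsm t_j^{(n)})=\log\norm{\Lambda^j M^n(x)}$, taking successive differences in $j$ turns the bound on $B_\epsilon(x)$ into $\abs{\log t_j^{(n)}(x)-n\lambda_j}\le 2(\abs{n}\epsilon+C)$ for all $j$ and all $n\in\Z$. In particular, for $i\in I$ one has $t_{i-1}^{(n)}(x)/t_i^{(n)}(x)\ge e^{n(\lambda_{i-1}-\lambda_i-4\epsilon)-4C}$, and the exponent is positive because of the standing assumption $\epsilon<\min_{k\ne l\in I}\abs{\lambda_k-\lambda_l}/(20d)$; hence this ratio exceeds any prescribed constant once $n$ is larger than some $m_0$ depending only on $\epsilon$, $C$, $d$ and the $\lambda_j$. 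This makes $F^{(n)}_{\ge i}(x)$ well defined for $n\ge m_0$, and likewise $F^{(-n)}_{\le i}(x)$ and $F^{(-n)}_{<i}(x)$ using the bound on $B^-_\epsilon(x)$.

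For the convergence I would use the elementary perturbation estimate for the bottom singular subspace: writing $F_{\ge i}(A)$ for the span of the right singular vectors of $A$ attached to its $d-i+1$ smallest singular values $s_i(A),\dots,s_d(A)$, if $A$ is invertible with $s_{i-1}(A)>s_i(A)$ and $L$ is invertible with $\norm{L}\,\norm{L^{-1}}<s_{i-1}(A)/s_i(A)$, then $\df(F_{\ge i}(LA),F_{\ge i}(A))\le\norm{L}\,\norm{L^{-1}}\,s_i(A)/s_{i-1}(A)$ — indeed on $F_{\ge i}(A)$ the map $LA$ contracts by at most $\norm{L}s_i(A)$ and on $F_{\ge i}(A)^{\perp}$ it expands by at least $s_{i-1}(A)/\norm{L^{-1}}$, which forces $F_{\ge i}(LA)$ to be a graph of controlled slope over $F_{\ge i}(A)$. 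Taking $A=M^n(x)$ and $L=M(T^n x)$ — which is bounded together with its inverse by hypothesis — gives $\df(F^{(n)}_{\ge i}(x),F^{(n+1)}_{\ge i}(x))\le\mathrm{const}\cdot e^{-n(\lambda_{i-1}-\lambda_i-4\epsilon)}$ for $n\ge m_0$. Summing this geometric series shows that $F^{(n)}_{\ge i}(x)$ converges to some $F^{(\infty)}_{\ge i}(x)$ with $\df(F^{(n)}_{\ge i}(x),F^{(\infty)}_{\ge i}(x))\le\mathrm{const}\cdot e^{-n(\lambda_{i-1}-\lambda_i-4\epsilon)}$, and symmetrically that $F^{(-n)}_{\le i}(x)\to F^{(-\infty)}_{\le i}(x)$ and $F^{(-n)}_{<i}(x)\to F^{(-\infty)}_{<i}(x)$. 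Enlarging $m_0$ so that these tails at time $m_0$ are smaller than, say, $\rho/4$, the angle hypothesis at the prescribed $m\ge m_0$ passes to the limit: $F^{(\infty)}_{\ge i}(x)$ and $F^{(-\infty)}_{<i}(x)$ make an angle at least $\rho/2$, and since their dimensions add up to $d$ they are complementary. Consequently $E_i(x)=F^{(\infty)}_{\ge i}(x)\cap F^{(-\infty)}_{\le i}(x)$ is $d_i$-dimensional, $\bigoplus_{i\in I}E_i(x)=\R^d$ (so that $F^{(\infty)}_{\ge i}(x)=\bigoplus_{j\ge i}E_j(x)$), and all the pairwise angles of this splitting are bounded below by some $\rho'=\rho'(\rho,d)>0$.

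It remains to bound $A_{20d\epsilon}(x)$. Unwinding its definition, this is the two-sided estimate $D^{-1}e^{n\lambda_i-10d\epsilon\abs{n}}\norm{v}\le\norm{M^n(x)v}\le D e^{n\lambda_i+10d\epsilon\abs{n}}\norm{v}$ for all $i\in I$, all $v\in E_i(x)$ and all $n\in\Z$. Since the inverse cocycle over $T^{-1}$ satisfies the same hypotheses at $x$ (its exterior-power data at $x$ being again controlled by $B_\epsilon(x)\le C$, now through its other half, and the angle hypothesis being the same up to orthogonal complements), it is enough to establish, for $n\ge 0$, the upper bound $\norm{M^n(x)|_{F^{(\infty)}_{\ge i}(x)}}\le D e^{n\lambda_i+10d\epsilon n}$ for every $i$; the lower bounds and the negative-time estimates then follow by applying this to the inverse cocycle. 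This final step is the real content of the theorem, and I expect it to be the main obstacle. The naive approach — decompose a unit vector $v\in F^{(\infty)}_{\ge i}(x)$ in the singular basis of $M^n(x)$ and use only the single closeness bound $\df(F^{(\infty)}_{\ge i}(x),F^{(n)}_{\ge i}(x))\lesssim e^{-n(\lambda_{i-1}-\lambda_i)}$ — is too wasteful: to keep $\norm{M^n(x)v}\lesssim e^{n\lambda_i}$ one needs the component of $v$ along the $l$-th singular direction ($l<i$) to be as small as $e^{-n(\lambda_l-\lambda_i)}$, i.e.\ a full product of the intermediate spectral gaps, whereas the flag closeness only supplies the smallest of those factors. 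The way around this is to use the control on all exterior powers of $M^n(x)$ simultaneously — equivalently, to control $\norm{\Lambda^p(M^n(x)|_W)}$ for the subspaces $W$ built out of the $E_j(x)$ and to squeeze the largest singular value of such a restriction between the product of all of them and the smaller ones — together with the angle lower bounds just obtained, and to propagate the estimate through the indices of $I$ one spectral gap at a time, peeling off the fast directions level by level and conceding only $O(\epsilon)$ at each of at most $d$ steps (which is exactly why the statement can afford the crude factor $20d$). With this in place the four cases of the two-sided estimate follow, and one reads off $A_{20d\epsilon}(x)\le D$.
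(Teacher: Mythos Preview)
Your outline follows the paper's architecture and is correct through the singular-value extraction, the definition of $m_0$, and the transfer of the angle hypothesis to the limiting flags. The substantive divergence is in how you prove convergence of $F_{\ge i}^{(n)}$, and this is precisely what creates the gap you flag at the end.

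The paper does not prove convergence via the one-step graph estimate $\df(F_{\ge i}^{(n)},F_{\ge i}^{(n+1)})\lesssim e^{-n(\lambda_{i-1}-\lambda_i)}$ that you use. Instead it proves directly, by decreasing induction on $j\in I$ with $j<i$, the refined projection estimate
\[
\norm{\Pi_{\le j}^{(m)}v}\le K\,e^{-n(\lambda_j-\lambda_i-6(i-j)\epsilon)}\norm{v}
\quad\text{for }v\in F_{\ge i}^{(n)}(x),\ m\ge n.
\]
The induction step writes $\Pi_{\le j}^{(m+1)}v=\Pi_{\le j}^{(m+1)}\Pi_{\le j}^{(m)}v+\sum_{k\in I\cap(j,i)}\Pi_{\le j}^{(m+1)}\Pi_k^{(m)}v+\Pi_{\le j}^{(m+1)}\Pi_{\ge i}^{(m)}v$, bounds the middle terms by the inductive hypothesis at level $k$ combined with the one-step estimate (which is essentially your graph bound), and sums the resulting geometric series in $m$. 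The output is the \emph{full} gap $\lambda_j-\lambda_i$ in the exponent for every $j<i$, at the cost of losing $O((i-j)\epsilon)$ --- exactly your ``$O(\epsilon)$ at each of at most $d$ steps.'' With this in hand the bound $\norm{M^n(x)v}\le K e^{n(\lambda_i+6d\epsilon)}$ for $v\in F_i^{(\infty)}(x)$ is immediate: decompose $v$ along the singular blocks of $M^n(x)$, and the $j$-th component contributes at most $e^{n(\lambda_j+3\epsilon)}\cdot e^{-n(\lambda_j-\lambda_i-6(d-1)\epsilon)}\le e^{n(\lambda_i+6d\epsilon)}$.

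So the mechanism you call ``peeling off the fast directions level by level'' is precisely the paper's argument, but it is an induction on the projection norms $\norm{\Pi_{\le j}^{(m)}v}$, not an exterior-power trick. I do not see how to make the exterior-power route work cleanly: to extract $\norm{M^n(x)v}$ from $\norm{\Lambda^p M^n(x)}$ you would need a lower bound on the volume of $M^n(x)$ restricted to some complementary subspace such as $F_{<i}^{(-\infty)}(x)$, and nothing in the hypotheses supplies this before you have already controlled the individual $E_j$. The fix is simply to replace your single-gap convergence argument by the inductive projection estimate; convergence of the flags is then a corollary, and the final bound on $A_{20d\epsilon}$ follows without further difficulty.
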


Note that there is no randomness involved in this statement, it is completely
deterministic.

The condition on $B_\epsilon$ controls separately what happens in the past
and in the future. Oseledets subspaces are defined by intersecting flags
coming from the past and from the future, as explained in
Theorem~\ref{thm:Oseledets_limit}. Therefore, it is not surprising that there
should be an additional angle requirement to make sure that these flag
families are not too singular one with respect to the other. Note that the
angle requirement is expressed in terms of a fixed time $m$. Hence, it will
be easy to enforce in applications.

\bigskip

In this section, we prove Theorem~\ref{thm:deterministic_Oseledets}. Once and
for all, we fix $T$, $M$ and $\mu$ satisfying the assumptions of this
theorem, and constants $C>0$, $\epsilon\in (0, \min_{i\neq j \in I}
\abs{\lambda_i-\lambda_j}/(20d))$ and $\rho>0$. Consider a point $x$
satisfying $B_{\epsilon}(x) \leq C$. We want to show that, if $m$ is suitably
large (depending only on $C$, $\epsilon$ and $\rho$), then the subspaces
$F_{\geq i}^{(m)}(x)$ and $F_{<i}^{(-m)}(x)$ are well defined, and moreover
if the angle between them is at least $\rho$, then $A_{20d\epsilon}(x)$ is
bounded by a constant $D$ only depending on $C$, $\epsilon$ and $\rho$.

We will use the notations introduced before
Theorem~\ref{thm:Oseledets_limit}. In particular, $t_i^{(n)}(x) = e^{n
\lambda_i^{(n)}(x)}$ is the $i$-th singular value of $M^n(x)$. We will
essentially repeat the argument from the proof of a technical lemma
in~\cite{ruelle_pesin_theory}. A more detailed exposition is given in
Section~2.6.2 in~\cite{sarig_notes}.

\medskip

\emph{Step 1: there exists $N_1 = N_1(C, \epsilon)$ such that, if $n \geq
N_1$, then $\abs{\lambda_i^{(n)}(x) - \lambda_i} \leq 3\epsilon$ for all
$i$.} In particular, thanks to the inequality $\epsilon < \min_{i\neq j \in
I} \abs{\lambda_i-\lambda_j}/(20d)$, there is a gap between the eigenvalues
$\lambda_j^{(n)}(x)$ in different blocks $\{i,\dotsc, i+d_i-1\}$. (Note that
the $20d$ is much larger than what we need here, $6$ would be enough, but it
will be important later on.) This implies that the different subspaces
$(F_i^{(n)}(x))_{i\in I}$ are well defined.
\begin{proof}
We have $B_{\epsilon}(x) \leq C$. Thanks to the equality $\log
\norm{\Lambda^i M^n(x)} = n(\lambda_1^{(n)}(x)+\dotsc + \lambda_i^{(n)}(x))$,
and to the definition of $B_\epsilon^+$, this gives for all $i$
\begin{equation*}
  n \abs*{ \lambda_1^{(n)}(x)+\dotsc + \lambda_i^{(n)}(x) - (\lambda_1 + \dotsc+ \lambda_i)}
  \leq \epsilon n + C.
\end{equation*}
Subtracting these equations with indices $i$ and $i-1$, we get
$\abs{\lambda_i^{(n)}(x) - \lambda_i} \leq 2 \epsilon + 2C/n$. If $n$ is
large enough, this is bounded by $3\epsilon$ as desired.
\end{proof}

From this point on, we will only consider values of $n$ or $m$ which are
$\geq N_1$, so that the subspaces $F_i^{(n)}(x)$ are well defined. We will
write $\Pi^{(n)}_i$ for the orthogonal projection on this subspace, and
$\Pi^{(n)}_{\geq i}$ and $\Pi^{(n)}_{<i}$ for the projections on
$\bigoplus_{j \in I, j \geq i} F_j^{(n)}(x)$ and $\bigoplus_{j \in I, j < i}
F_j^{(n)}(x)$ respectively. They satisfy $\Pi^{(n)}_{\geq i} + \Pi^{(n)}_{<i}
= \Id$.

\medskip

\emph{Step 2: there exists a constant $K_1 = K_1(C, \epsilon)$ such that, for
all $m \geq n \geq N_1$, all $i>j$ in $I$ and all $v \in F_{\geq
i}^{(n)}(x)$, holds}
\begin{equation*}
  \norm{\Pi_{\leq j}^{(m)} v} \leq K \norm{v} e^{-n (\lambda_j - \lambda_i - 6(d-1)\epsilon)}.
\end{equation*}
\begin{proof}
The proof is done in two steps.

First claim: there exists a constant $K_0$ such that, for $n \geq N_1$, $v
\in F_{\geq i}^{(n)}(x)$ and $j<i$,
\begin{equation*}
  \norm{\Pi_j^{(n+1)} v} \leq K_0 \norm{v} e^{-n (\lambda_j - \lambda_i - 6\epsilon)}.
\end{equation*}

Indeed, on the one hand, we have
\begin{equation*}
  \norm{M^{n+1}(x) v} = \norm{M(T^n x) \cdot M^n(x) v} \leq (\sup_y \norm{M(y)}) \cdot \norm{M^n(x) v}
  \leq (\sup_y \norm{M(y)}) e^{n(\lambda_i+3\epsilon)} \norm{v},
\end{equation*}
thanks to the first step and the fact that $v \in F_{\geq i}^{(n)}(x)$. On
the other hand, as $M^{n+1}(x)$ respects the orthogonal decomposition into
the spaces $F_k^{(n+1)}(x)$, we have
\begin{equation*}
  \norm{M^{n+1}(x) v} \geq \norm{M^{n+1}(x) \Pi_j^{(n+1)} v}
  \geq e^{(n+1) (\lambda_j - 3\epsilon)} \norm{\Pi_j^{(n+1)} v},
\end{equation*}
again thanks to the first step. Putting these two equations together gives
the result.

\medskip

Second claim: for all $j<i$ in $I$, there exists a constant $K_{i,j}$ such
that, for all $m \geq n\geq N_1$ and all $v \in F_{\geq i}^{(n)}(x)$, we have
\begin{equation}
\label{eq:claim2}
  \norm{\Pi_{\leq j}^{(m)} v} \leq K_{i,j} e^{-n (\lambda_j-\lambda_i - 6(i-j) \epsilon)} \norm{v}.
\end{equation}
Once this equation is proved, then Step 2 follows by taking for $K_1$ the
maximum of the $K_{i,j}$ over $j<i$ in $I$. To prove~\eqref{eq:claim2}, we
argue by decreasing induction over $j < i$, $j \in I$. Assume thus that the
result is already proved for all $k \in I \cap (j,i)$, let us prove it for
$j$.

Decomposing a vector $v$ along its components on $F_{\leq j}^{(m)}(x)$, on
$F_k^{(m)}(x)$ for $k \in I \cap (j, i)$ and on $F_{\geq i}^{(m)}(x)$, we get
\begin{equation}
\label{eq:piqusfdp}
  \norm{\Pi_{\leq j}^{(m+1)} v} \leq \norm{\Pi_{\leq j}^{(m+1)} \Pi_{\leq j}^{(m)} v}
    + \sum_{k\in I \cap (j, i)} \norm{\Pi_{\leq j}^{(m+1)} \Pi_{k}^{(m)} v}
    + \norm{\Pi_{\leq j}^{(m+1)} \Pi_{\geq i}^{(m)} v}.
\end{equation}
The first term is bounded by $\norm{\Pi_{\leq j}^{(m)} v}$ as $\Pi_{\leq
j}^{(m+1)}$ is a projection. The second term is bounded by $K_0
e^{-m(\lambda_j-\lambda_k-6\epsilon)} \norm{\Pi_{k}^{(m)} v}$ thanks to the
first claim applied to $m$ and $\Pi_{k}^{(m)} v \in F_{\geq k}^{(m)}(x)$. The
induction hypothesis asserts that $\norm{\Pi_{k}^{(m)} v} \leq K_{k, i}
e^{-m(\lambda_k - \lambda_i -6(i-k)\epsilon)}\norm{v}$. Overall, we get for
the second term a bound which is at most
\begin{equation*}
  \sum_{k \in I \cap (j,i)} K_0 K_{i,k} e^{-m(\lambda_j - \lambda_i - 6(i-k + 1) \epsilon)}
  \leq K' e^{-m (\lambda_j -\lambda_i - 6(i-j) \epsilon)}\norm{v}.
\end{equation*}
Finally, the third term in~\eqref{eq:piqusfdp} is bounded by $K_0
e^{-m(\lambda_j - \lambda_i-6\epsilon)} \norm{\Pi_{\geq i}^{(m)} v}$, by the
first claim applied to $m$ and $\Pi_{\geq i}^{(m)} v \in F_{\geq
i}^{(m)}(x)$. This is bounded by $K_0 e^{-m(\lambda_j - \lambda_i-6\epsilon)}
\norm{v}$ as $\Pi_{\geq i}^{(m)}$ is a projection.

All in all, we have proved that
\begin{equation*}
  \norm{\Pi_{\leq j}^{(m+1)} v} \leq (K'+K_0) e^{-m (\lambda_j -\lambda_i - 6(i-j) \epsilon)}\norm{v}
  + \norm{\Pi_{\leq j}^{(m)} v}.
\end{equation*}
The estimate~\eqref{eq:claim2} then follows by induction over $m$, summing
the geometric series starting from $n$ as $\lambda_j -\lambda_i - 6(i-j)
\epsilon > 0$ thanks to the choice of $\epsilon$.
\end{proof}

The second step controls projections from $F_i^{(n)}$ to $F_j^{(m)}$, for $m
\geq n$, when $i > j$. The third step controls projections in the other
direction, thus giving a full control of the respective projections of the
spaces.

\medskip

\emph{Step 3: for all $m \geq n \geq N_1$, all $i>j$ in $I$ and all $v \in
F_{\leq j}^{(n)}$, holds}
\begin{equation*}
  \norm{\Pi_{\geq i}^{(m)} v} \leq K_1 \norm{v} e^{-n (\lambda_j - \lambda_i - 6(d-1)\epsilon)}.
\end{equation*}
\begin{proof}
Define a new matrix cocycle by $\tilde M(x) = (M^{-1}(x))^t$, from $E^*(x)$
to $E^*(T x)$. In coordinates (identifying $E(x)$ and $E^*(x)$ thanks to its
Euclidean structure), it is given as follows. Write $M^n(x)$ as $k_1 A k_2$
where $k_1$ and $k_2$ are orthogonal matrices, and $A$ is a diagonal matrix
with entries $t_1^{(n)}(x)=e^{n \lambda_1(n)(x)},\dotsc, t_d^{(n)}(x)=e^{n
\lambda_d(n)(x)}$. Then $\tilde M^n(x) = k_1 A^{-1} k_2$. Hence, it has the
same decomposition into singular spaces as $M^n(x)$, the difference being
that the singular values of $M^n(x)$ are replaced by their inverses.

The proof in Step 2 only used the fact that the logarithms of the singular
values were $3\epsilon$-close to $\lambda_i$, and the norm of the cocycle is
uniformly bounded. All these properties are shared by $\tilde M$. Hence, the
conclusion of Step 2 also applies to $\tilde M$, except that the inequality
between $i$ and $j$ have to be reversed as the ordering of singular values of
$\tilde M$ is the opposite of that of $M$. This is the desired conclusion.
\end{proof}

Overall, Steps 2 and 3 combined imply that the projection of a vector in
$F_i^{(n)}(x)$ on $(F_i^{(m)}(x))^\perp = F_{<i}^{(m)}(x) \oplus
F_{>i}^{(m)}(x)$ has a norm bounded by $2K_1 e^{-\delta n}$, for $\delta =
\min_{k \neq \ell \in I} \abs{\lambda_k -\lambda_\ell} -6(d-1) \epsilon > 0$.
Hence, in terms of the distance $\df$ on the Grassmannian of
$d_i$-dimensional subspaces defined in~\eqref{eq:distance_Grass}, we have
$\df(F_i^{(n)}(x), F_i^{(m)}(x)) \leq 2K_1 e^{-\delta n}$. It follows that
$F_i^{(n)}(x)$ is a Cauchy sequence, converging to a subspace
$F_i^{(\infty)}(x)$ as claimed in the statement of the theorem.

\medskip

\emph{Step 4: there exist $N_2 \geq N_1$ and a constant $K_2$ such that, for
all $n \geq N_2$, all $i$ in $I$ and all $v \in F_{i}^{(\infty)}$ with norm
$1$, holds}
\begin{equation}
\label{eq:step4}
  K_2^{-1} e^{n(\lambda_i -6d \epsilon)} \leq \norm{M^n(x) v} \leq K_2 e^{n (\lambda_i + 6d\epsilon)}.
\end{equation}
\begin{proof}
Take a vector $v \in F_i^{(\infty)}(x)$. For $j\in I$, the norm of the
projection $\pi_{F^{(n)}_j(x) \to F_i^{(\infty)}(x)}$, as the limit of the
projections $\pi_{F^{(n)}_j(x) \to F_i^{(m)}(x)}$, is bounded by $K_1 e^{-n
(\abs{\lambda_i -\lambda_j} - 6(d-1)\epsilon)}$ thanks to Steps 2 and 3 (note
that this bound is nontrivial only if $j \neq i$). Its transpose, the
projection $\pi_{F^{(\infty)}_i(x) \to F_j^{(n)}(x)}$, has the same norm and
therefore satisfies the same bound.

Writing $v_j = \pi_{F^{(\infty}_i(x) \to F_j^{(n)}(x)} v$, we have $M^n(x) v
= \sum_{j \in I} M^n(x) v_j$. We have
\begin{equation}
\label{eq:norm_vj}
  \norm{v_j} \leq  K_1 e^{-n (\abs{\lambda_i -\lambda_j} - 6(d-1)\epsilon)}.
\end{equation}
As $M^n(x)$ expands by at most $e^{n \lambda_j + 3\epsilon}$ on
$F_i^{(n)}(x)$, thanks to Step $1$, we obtain
\begin{equation*}
  \norm{M^n(x) v_j} \leq K_1 e^{-n (\abs{\lambda_i -\lambda_j} - 6(d-1)\epsilon)} e^{n \lambda_j + 3\epsilon}
  \leq K_1 e^{n (\lambda_i + 6d\epsilon)}.
\end{equation*}
Here, it is essential to have in Step 2 a control in terms of
$\lambda_j-\lambda_i$, and not merely some exponentially decaying term
without a control on the exponent. This proves the upper bound
in~\eqref{eq:step4}.

For the lower bound, we write $\norm{M^n(x) v} \geq \norm{M^n(x) v_i}$ as all
the vectors $M^{(n)}(x) v_j$ are orthogonal. This is bounded from below by
$e^{n (\lambda_i - 3\epsilon)} \norm{v_i}$, by Step 1. To conclude, it
suffices to show that $\norm{v_i}$ is bounded from below by a constant if $n$
is large enough. As $\norm{v_i} \geq \norm{v} - \sum_{j \neq i} \norm{v_j}$,
this follows from the fact that $\norm{v_j}$ tends to $0$ with $n$ if $j\neq
i$, thanks to~\eqref{eq:norm_vj}.
\end{proof}

We recall that we are trying to control the behavior of $M^n(x)$ not on
$F_i^{(\infty)}(x)$, but on the Oseledets subspace $E_i(x) = F_{\geq
i}^{(\infty)}(x) \cap F_{\leq i}^{(\infty)}(x)$. To this effect, there is in
the statement of Theorem~\ref{thm:deterministic_Oseledets} an additional
angle assumption that we will use now. Let $\rho>0$ be given as in the
statement of the theorem. There exists $\delta > 0$ with the following
property: if $U$ and $V$ are two subspaces of complementary dimension making
an angle at least $\rho$, then any subspaces $U'$ and $V'$ with $\df(U, U')
\leq \delta$ and $\df(V, V') \leq \delta$ make an angle at least $\rho/2$.

We fix once and for all $m_0=m_0(C, \epsilon, \delta) \geq N_2$ such that,
for all $i\in I$ and all $m\geq m_0$, one has $\df(F_{\geq i}^{(m)}(x),
F_{\geq i}^{(\infty)}(x)) \leq \delta$ and $\df(F_{\leq i}^{(-m)}(x), F_{\leq
i}^{(-\infty)}(x)) \leq \delta$. Its existence follows from the convergence
asserted at the end of Step 3 (and from the same result for $T^{-1}$).

Assume now (and until the end of the proof) that, for some $m\geq m_0$, the
angle between $F_{\geq i}^{(m)}(x)$ and $F_{<i}^{(-m)}(x)$ is $\geq \rho$, as
in the assumptions of the theorem. It follows then that the angle between
$F_{\geq i}^{(\infty)}(x)$ and $F_{< i}^{(-\infty)}(x)$ is at least $\rho/2$.
As a consequence, the spaces $F_{\geq i}^{(\infty)}(x)$ and $F_{\leq
i}^{(-\infty)}(x)$ are transverse, and their intersection is a
$d_i$-dimensional space $E_i(x)$.

\medskip

\emph{Step 5: there exist constants $K_3>0$ and $N_3 \geq N_2$ such that, for
all $n \geq N_3$, all $i\in I$ and all $v \in E_i(x)$ with norm $1$, holds}
\begin{equation}
\label{eq:step5}
  K_3^{-1} e^{n(\lambda_i -6d \epsilon)} \leq \norm{M^n(x) v} \leq K_3 e^{n (\lambda_i + 6d\epsilon)}.
\end{equation}
\begin{proof}
We have $v \in E_i(x) \subseteq F_{\geq i}^{(\infty)}(x)$. Decomposing the
vector $v$ along its components $v_j \in F_j^{(\infty)}(x)$ with $j\in I \cap
[i, d]$ and using the upper bound of~\eqref{eq:step4} for each $v_j$, the
upper bound in~\eqref{eq:step5} readily follows.

For the lower bound, we note that $E_i(x)$, being contained in $F_{\leq
i}^{(-\infty)}(x)$, makes an angle at least $\rho/2$ with
$F_{>i}^{(\infty)}(x)$. This implies that the norm of the projection $v_i$ of
$v$ on $F_i^{(\infty)}(x)$ is bounded from below, by a constant $c_0 > 0$.
Using both the upper and the lower bounds of Step 4, we obtain
\begin{equation*}
  \norm{M^n(x) v} \geq \norm{M^n(x) v_i} - \sum_{j \in I, j > i} \norm{M^n(x) v_j}
  \geq c_0 K_2^{-1} e^{n(\lambda_i -6d \epsilon)} - \sum_{j\in I, j > i} K_2 e^{n (\lambda_j + 6d\epsilon)}.
\end{equation*}
The choice of $\epsilon$ ensures that, for $j>i$ in $I$, one has $\lambda_i
-6d \epsilon > \lambda_j + 6d\epsilon$. Hence, the sum in this equation is
asymptotically negligible, and we obtain a lower bound $c_0 K_2^{-1}
e^{n(\lambda_i -6d \epsilon)} /2$ if $n$ is large enough.
\end{proof}

\emph{Step 6: there exists a constant $K_4$ such that, for all $n \in \Z$,
all $i\in I$ and all $v \in E_i(x)$ with norm $1$, holds}
\begin{equation}
\label{eq:step6}
  K_4^{-1} e^{n\lambda_i -6d \epsilon \abs{n}} \leq \norm{M^n(x) v}
  \leq K_4 e^{n \lambda_i + 6d\epsilon \abs{n}}.
\end{equation}
\begin{proof}
Step 5 shows that this control holds uniformly over $n \geq N_3$. The same
argument applied to the cocycle $M^{-1}$ and the map $T^{-1}$ gives the same
control for $n \leq -N_3$ (note that the function $B_\epsilon(x)$, which is
bounded by $C$ by assumption, controls both positive and negative times).
Finally, the control over $n \in (-N_3, N_3)$ follows from the finiteness of
this interval, and the uniform boundedness of $M$ and $M^{-1}$.
\end{proof}

We can finally conclude the proof of
Theorem~\ref{thm:deterministic_Oseledets}. We want to bound the quantity
$A_{20d \epsilon}(x)$ defined in~\eqref{eq:def_A_epsilon}. Fix $i\in I$, $v
\in E_i(x) \setminus \{0\}$ and $m, n \in \Z$. Then, using the upper bound
of~\eqref{eq:step6} for $\norm{M^n(x) v}$ and the lower bound for
$\norm{M^m(x) v}$, we get
\begin{align*}
  \frac{ \norm{M^n(x) v}}{\norm{M^m(x) v}} & e^{-(n-m)\lambda_i} e^{-(\abs{n} + \abs{m}) (20 d \epsilon)/2}
  \\& \leq K_4 e^{n \lambda_i + 6d \epsilon \abs{n}} \cdot K_4 e^{-m \lambda_i + 6d \epsilon \abs{m}} \cdot
       e^{-(n-m)\lambda_i} e^{-(\abs{n} + \abs{m}) (20 d \epsilon)/2}
  \\ &
  = K_4^2 e^{-(\abs{n} + \abs{m})4d \epsilon}
  \leq K_4^2.
\end{align*}
Taking the supremum over $i\in I$, $v \in E_i(x) \setminus \{0\}$ and $m, n
\in \Z$, this shows that $A_{20d \epsilon}(x) \leq K_4^2$. This concludes the
proof, for $D = K_4^2$. \qed

\section{Exponential returns to Pesin sets}

\label{sec:proof_returns_Pesin}

In this section, we prove Theorem~\ref{thm:exp_returns_Pesin}. As in the
assumptions of this theorem, let us consider a transitive subshift of finite
type $T$, with a Gibbs measure $\mu$ and a Hölder cocycle $M$ which has
exponential large deviations for all exponents. Let $\delta>0$. We wish to
show that, for some $D>0$, the set
\begin{equation*}
  \{x \st \Card\{k \in [0, n-1] \st A_\epsilon(T^k x) > D\} \geq \delta n\}
\end{equation*}
has exponentially small measure. Reducing $\epsilon$ if necessary, we can
assume $\epsilon < \abs{\lambda_i-\lambda_j}$ for all $i\neq j \in I$. Set
$\epsilon' = \epsilon/(20d)$.

The angle between the Lyapunov subspaces is almost everywhere nonzero. In
particular, given $i\in I$, the angle between $F_{\geq i}^{(\infty)}(x)$ and
$F_{<i}^{(-\infty)}(x)$ is positive almost everywhere. On a set of measure
$>1-\delta/2$, it is bounded from below by a constant $2 \rho>0$ for all $i$.
These subspaces are the almost sure limit of $F^{(m)}_{\geq i}(x)$ and
$F^{(-m)}_{<i}(x)$, according to Theorem~\ref{thm:Oseledets_limit}. Hence, if
$m$ is large enough, say $m \geq m_1$, the set
\begin{multline*}
  U = U_m = \{x\in X \st \forall i \in I, F^{(m)}_{\geq i}(x) \text{ and }F^{(-m)}_{<i}(x)\text{ are well defined} \\
        \text{and } \angle(F^{(m)}_{\geq i}(x),F^{(-m)}_{<i}(x)) > \rho\}
\end{multline*}
has measure $>1-\delta/2$.

We will use the functions $B^{(i)\pm}_{\epsilon'}$ defined
before~\eqref{eq:def_B_epsilon}.  For each $i \in [1,d]$ and $\sigma \in
\{+,-\}$, there exists a constant $C_{i,\sigma}$ such that
\begin{equation*}
  \{x \st \Card\{k \in [0, n-1] \st B^{(i)\sigma}_{\epsilon'}(T^k x) > C_{i,\sigma}\} \geq \delta n/ (4d)\}
\end{equation*}
has exponentially small measure, by Theorem~\ref{thm:exp_returns} and the
assumption on exponential large deviations for all exponents. (For
$\sigma=-$, this theorem should be applied to $T^{-1}$). Let $C' = \max
C_{i,\sigma}$. As $B_{\epsilon'}$ is the maximum of the functions
$B^{(i)\sigma}_{\epsilon'}$, it follows that
\begin{equation*}
  \{x \st \Card\{k \in [0, n-1] \st B_{\epsilon'}(T^k x) > C' \} \geq \delta n/ 2\}
\end{equation*}
has exponentially small measure.

We apply Theorem~\ref{thm:deterministic_Oseledets} with $\epsilon=\epsilon'$
and $C=C'$ and $\rho$, obtaining some integer $m_0 \geq 1$ and some constant
$D$ with the properties described in Theorem 5.1. Let us fix until the end of
the proof $m = \max(m_0, m_1)$.

The set $U = U_m$ is open by continuity of $M^m$ and $M^{-m}$. In particular,
it contains a set $V$ which is a finite union of cylinders, with
$\mu(V)>1-\delta/2$. To conclude, it suffices to show that
\begin{equation}
\label{eq:piouqsif}
 \{x \st \Card\{k \in [0, n-1] \st T^k x \notin V\} \geq \delta n/ 2\}
\end{equation}
has exponentially small measure. Indeed, assume this holds. Then, apart from
an exponentially small set, there are at most $\delta n$ bad times $k$ in
$[0,n-1]$ for which $T^k x \notin V$ or $B_{\epsilon'}(T^k x) > C'$. For the
other good times, we have $T^k x \in V$ and $B_{\epsilon'}(T^k x) \leq C'$.
Then Theorem~\ref{thm:deterministic_Oseledets} shows that $A_\epsilon(T^k x)
= A_{20d\epsilon'}(T^k x)\leq D$, as desired.

It remains to control~\eqref{eq:piouqsif}. Let $\chi_{V}$ denote the
characteristic function of $V$, it is a continuous function. The set
in~\eqref{eq:piouqsif} is
\begin{equation*}
  \{x \st S_n \chi_{V} (x) < (1-\delta/2) n\}.
\end{equation*}
As $\int \chi_{V} = \mu(V) > 1-\delta/2$ by construction, the large deviation
principle for continuous functions shows that this set is indeed
exponentially small. This concludes the proof of the theorem. \qed

\appendix

\section{Counterexamples to exponential large deviations}

\label{app:counter}

In this appendix, we give two counterexamples to exponential large
deviations. The first easy one, in Proposition~\ref{prop:counter_subadd}, is
for Hölder-continuous subadditive cocycles. The second harder one, in
Theorem~\ref{thm:counter_example}, is in the more restrictive setting of
norms of matrix cocycles (only continuous, although one expects that the same
kind of result should hold for Hölder cocycles with small Hölder exponent).

\begin{prop}
\label{prop:counter_subadd} Let $(T,\mu)$ be an invertible subshift with an
invariant ergodic measure $\mu$ which is not supported on a periodic orbit.
Consider a positive sequence $u_n$ tending to $0$. There exists a subadditive
cocycle $a(n,x)$ such that $a(n,\cdot)$ is Hölder continuous for any $n$,
such that $a(n,x)/n \to 0$ almost everywhere, and such that, for infinitely
many values of $n$,
\begin{equation*}
  \mu\{  x \st a(n,x)/n \leq -1\} \geq u_n.
\end{equation*}
\end{prop}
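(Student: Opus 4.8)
The plan is to build $a$ so that along a typical orbit it stays close to $0$, but at each of a sparse sequence of times $n_j$ it is forced below $-n_j$ on a prescribed set $G_j$ with $\mu(G_j)\ge u_{n_j}$; arranging $\sum_j u_{n_j}<\infty$ makes the $G_j$ small enough that $\mu(\limsup_j G_j)=0$, which together with Kingman's theorem will give $a(n,x)/n\to 0$. Since $\mu$ is not carried by a periodic orbit it is non‑atomic, so $\sup_{|w|=k}\mu[w]\to 0$ as $k\to\infty$. First I would pick $n_j\to\infty$ with $S:=\sum_j u_{n_j}$ as small as desired, then cylinders $[w_j]$ with $\mu[w_j]$ of order $u_{n_j}/n_j$ (so $|w_j|\ll n_j$), and finally set $\ell_j\approx n_j/10$ and $G_j:=\bigcup_{0\le s<\ell_j}T^{-s}[w_j]$, the translates $s$ chosen so that the correlations $\mu(T^{-s}[w_j]\cap T^{-s'}[w_j])$ stay $\lesssim\mu[w_j]^2$ (possible by ergodicity); a Paley–Zygmund estimate then yields $\mu(G_j)\ge u_{n_j}$. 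The point of this particular $G_j$ is that membership forces a visit of the orbit to $[w_j]$ at some time $<\ell_j$, hence inside a window of radius $\sim n_j$ around every $i\in[0,n_j)$.

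Next I would define a cost $c(i,x)$ equal to a constant $C_0$ when, for some $j$ with $|w_j|\le i$, the orbit of $x$ meets $[w_j]$ somewhere in $[\,i-\min(n_j,i),\,i+\min(n_j,i)\,]$, and $0$ otherwise, and set $b(n,x):=\alpha n-\sum_{i=0}^{n-1}c(i,x)$ for a small constant $\alpha>0$. The verifications: (i) $b$ is a subadditive cocycle, because shifting the base point by $n$ can only delete past $[w_j]$‑visits, so $c(k,T^nx)\le c(n+k,x)$, which is exactly subadditivity of $b$; (ii) each $b(n,\cdot)$ is locally constant, hence Lipschitz (so Hölder), since for $i<n$ only the finitely many scales with $|w_j|<n$ contribute and their windows lie in $[0,2n)$ — this is why the window radius is capped by $i$; (iii) on $G_j$ one has $c(i,x)=C_0$ for all but $\lesssim n_j/10$ of the indices $i<n_j$, so $b(n_j,x)\le(\alpha-\tfrac{19}{20}C_0)n_j\le -n_j$ once $C_0$ is large; (iv) by subadditivity of $\mu$, $\mu\{c(i,\cdot)=C_0\}\le\sum_j(2\min(n_j,i)+1)\mu[w_j]\lesssim\sum_j u_{n_j}=S$ — here the crucial cancellation "window width $\sim n_j$ times $\mu[w_j]\sim u_{n_j}/n_j$" appears — so with $S$ (hence $\alpha,C_0$) small, $\int b(n,\cdot)\,d\mu\ge n(\alpha-O(C_0S))\ge 0$ for every $n$. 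Finally $b(n,x)/n\to\beta:=\lim_n\tfrac1n\int b(n,\cdot)\,d\mu\in[0,\alpha]$ a.e.\ by Kingman, and I would take $a(n,x):=b(n,x)-\beta n$: still a subadditive cocycle with $a(n,\cdot)$ Lipschitz, $a(n,x)/n\to 0$ a.e., and $a(n_j,x)\le-(1+\beta)n_j\le-n_j$ on $G_j$, so $\mu\{x: a(n_j,x)/n_j\le-1\}\ge\mu(G_j)\ge u_{n_j}$ for the infinitely many times $n=n_j$.

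The main obstacle — and the reason $a$ cannot be anything naive — is that Kingman forces $\int a(n,\cdot)\,d\mu\ge 0$ for all $n$, whereas a set $\{a(n_j,\cdot)\le-n_j\}$ of measure $\ge u_{n_j}$ makes $a$ drop by $\ge n_j$ each time the orbit meets it, i.e.\ with frequency $\ge u_{n_j}$; if those drops were evenly spaced one would need a positive drift of rate $\ge n_j u_{n_j}$, impossible when $n_j u_{n_j}\not\to 0$. The way out, which is the one genuinely delicate point, is precisely the clustered shape of $G_j$: visits to $G_j$ come in runs of length $\sim n_j$ separated by very long gaps (of length $\sim n_j/u_{n_j}$), so the associated drops overlap instead of accumulating and an arbitrarily small fixed drift $\alpha$ suffices. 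The remaining care is in (a) choosing the $[w_j]$ and $G_j$ so that $\mu(G_j)\ge u_{n_j}$ even without a Gibbs property for $\mu$, and (b) the capping of the windows that keeps each $b(n,\cdot)$ of finite range.
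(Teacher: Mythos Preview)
Your overall architecture is sound and close in spirit to the paper's: build the cocycle so that it drops by $\sim n_j$ on a set of measure $\gtrsim u_{n_j}$, while keeping the drops clustered enough that an arbitrarily small positive drift compensates them on average. The subadditivity check (i), the local-constancy check (ii), and the final adjustment $a=b-\beta n$ are all correct.

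The genuine gap is in step (iv)/(v) of your plan, where you assert $\mu(G_j)\ge u_{n_j}$ via Paley--Zygmund ``possible by ergodicity''. For a \emph{general} ergodic measure, ergodicity only tells you that $\tfrac1N\sum_{t<N}\mu([w]\cap T^{-t}[w])\to\mu[w]^2$, with no control on the rate; at the specific scale $\ell_j$ you need, the correlations $\mu(T^{-s}[w_j]\cap T^{-s'}[w_j])$ can be far from $\mu[w_j]^2$ for \emph{most} pairs $s,s'<\ell_j$, and the second-moment method gives nothing. If you instead let the translates spread over a range long enough for the ergodic averages to kick in, then $\mu(G_j)$ becomes close to $1$, which destroys the integral bound $\sum_j n_j\mu[w_j]<\infty$ (since then $\ell_j\mu[w_j]\gtrsim 1$ forces $n_j\mu[w_j]\gtrsim 1$ for each $j$). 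There is a related secondary issue: the implication ``$\mu[w_j]\sim u_{n_j}/n_j$ so $|w_j|\ll n_j$'' fails for general $\mu$, since cylinder measures can decay arbitrarily slowly.

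Both problems disappear if you replace the cylinder $[w_j]$ by the base $R_j$ of a Rokhlin tower of height $K_j\gg n_j$ made of finitely many cylinders (such a tower exists for any aperiodic $\mu$); the translates $T^sR_j$ are then \emph{automatically disjoint}, so $\mu\bigl(\bigcup_{s<\ell_j}T^sR_j\bigr)=\ell_j\mu(R_j)$ exactly, and you can tune $\ell_j,\mu(R_j)$ at will. This is precisely what the paper does; in fact the paper's construction is simpler than yours, since with Rokhlin towers one can take the cocycle to be an honest sum of Birkhoff sums $a(n,x)=\sum_{i:n_i\le n}S_nf_i(x)$ with each $f_i$ a locally constant nonpositive function of integral $-2^{-i}$, and the window machinery $c(i,x)$ becomes unnecessary.
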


The proof uses the following easy variant of Rokhlin's lemma:
\begin{lem}
\label{lem:rokhlin} Let $\delta>0$ and $m>0$. In a subshift in which the set
of periodic points has measure $0$, there exists a subset $R$ made of
finitely many cylinders such that the sets $(T^i R)_{0\leq i < m}$ are
pairwise disjoint and cover a measure at least $1-\delta$.
\end{lem}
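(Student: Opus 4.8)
The plan is to run the standard Rokhlin tower construction, but carefully at the level of cylinders so that the base is a finite union of cylinders (hence clopen), which is what the later applications need. First I would fix a large integer $M$, to be chosen much larger than $m$ (for instance $M \gg m/\delta$), and work with cylinders of length $2M+1$, i.e.\ sets of the form $[x_{-M},\dots,x_M]$. There are finitely many of them, and they form a clopen partition $\mathcal P$ of $\Sigma$. The first step is to choose, inside a suitable "base" piece, a return-time structure. Since the set of periodic points has measure zero, for $\mu$-a.e.\ $x$ the orbit $\{x, Tx, \dots, T^{M-1}x\}$ visits $M$ distinct points, so one can find a cylinder $A\in\mathcal P$ (of positive measure) such that $A, TA, \dots, T^{m-1}A$ are pairwise disjoint; indeed if no such cylinder existed then every cylinder of $\mathcal P$ would meet one of its first $m$ iterates on a positive-measure set, forcing a periodic structure on a set of positive measure when $M$ is large, contradicting the hypothesis. (Alternatively, one applies the usual Rokhlin lemma to get a measurable base $B$ with $B,\dots,T^{m-1}B$ disjoint and $\mu(B)\ge (1-\delta/2)/m$, then approximates $B$ from inside by a finite union of cylinders $R\subseteq B$ with $\mu(B\setminus R)$ tiny; the disjointness of $R,\dots,T^{m-1}R$ is inherited from that of $B,\dots,T^{m-1}B$.)

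The second step is to make the tower \emph{tall enough} that its iterates cover measure at least $1-\delta$. Starting from a clopen base $R_0$ with $R_0,\dots,T^{m-1}R_0$ disjoint, I would enlarge it greedily: as long as $\mu\big(\bigcup_{i=0}^{m-1}T^iR_0\big) < 1-\delta$, there is a point, hence (by clopenness) a whole cylinder $C$ disjoint from $\bigcup_{i=0}^{m-1}T^iR_0$, such that $C, TC, \dots, T^{m-1}C$ are still pairwise disjoint \emph{and} disjoint from the existing tower; adjoin $C$ to the base. Here one uses again that $\mu$-a.e.\ point is non-periodic, so the "bad" set where one cannot extend has measure zero, and one uses that the complement of the current tower, being clopen of positive measure, contains such a cylinder. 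Since each adjunction increases $\mu(R_0)$ by a definite amount bounded below by the minimal cylinder mass among those used, but more simply since at each stage the tower's measure strictly increases and the procedure can only terminate when the covered measure is $\ge 1-\delta$, one concludes after finitely many steps with a clopen $R := R_0$ such that $R, TR,\dots,T^{m-1}R$ are pairwise disjoint and $\mu\big(\bigcup_{i=0}^{m-1}T^iR\big)\ge 1-\delta$.

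The main obstacle is the interplay between the greedy extension and termination: one must argue that the process genuinely terminates rather than adding cylinders of rapidly shrinking mass forever. The cleanest way around this is \emph{not} to be greedy cylinder-by-cylinder but to do it in one shot via the classical Rokhlin lemma: take a purely measurable base $B$ with $B,TB,\dots,T^{m-1}B$ disjoint and $\mu(B)\ge(1-\delta/2)/m$ (possible since $(T,\mu)$ is aperiodic), so that the tower $\bigcup_{i<m}T^iB$ has measure $\ge 1-\delta/2$; then by regularity of $\mu$ choose a finite union of cylinders $R$ with $R\subseteq B$ and $\mu(B\setminus R)<\delta/(2m)$. The sets $T^iR\subseteq T^iB$ are then pairwise disjoint, and $\mu\big(\bigcup_{i<m}T^iR\big)\ge \mu\big(\bigcup_{i<m}T^iB\big)-\sum_{i<m}\mu(T^i(B\setminus R))\ge (1-\delta/2)-m\cdot\delta/(2m)=1-\delta$, as required. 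This reduces the whole lemma to the standard Rokhlin lemma plus inner regularity of Borel measures on a metrizable compact space, and I would write it up in that form.
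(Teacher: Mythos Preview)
Your final ``clean'' argument has a genuine gap at the inner-approximation step. You assert that, by regularity of $\mu$, one can choose a finite union of cylinders $R$ with $R\subseteq B$ and $\mu(B\setminus R)<\delta/(2m)$. Regularity does not give this: it yields inner approximation by \emph{compact} sets, outer approximation by \emph{open} sets, or approximation of $B$ in symmetric difference by clopen sets, but not inner approximation by clopen sets. In fact the latter can fail completely: in the full $2$-shift with the $(\tfrac12,\tfrac12)$ Bernoulli measure, the set $\{x:x_n=0\text{ for infinitely many }n\ge0\}$ has measure $1$ but empty interior, so its only clopen subset is $\emptyset$. Since the measurable base $B$ produced by the abstract Rokhlin lemma carries no topological information, nothing prevents it from having empty interior, and the step ``$R\subseteq B$'' is unjustified. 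Without this inclusion you lose $T^iR\subseteq T^iB$ and hence the pairwise disjointness of the $T^iR$, which is the whole point.

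The paper's proof confronts the same issue and resolves it differently. It approximates the Rokhlin base $S$ by a clopen $S'$ only in symmetric difference, with $\mu(S\triangle S')\le\rho=\delta/(4m^2)$, and then \emph{enforces} disjointness by hand, setting $R=S'\setminus\bigcup_{0<i<m}T^iS'$. This $R$ is still a finite union of cylinders and has pairwise disjoint first $m$ iterates by construction. Using $S\cap T^iS=\emptyset$ one checks $\mu(S'\cap T^iS')\le 2\rho$ for each $0<i<m$, whence $\mu(R)\ge\mu(S)-2m\rho$ and the tower $\bigcup_{i<m}T^iR$ still covers measure at least $1-\delta/2-2m^2\rho=1-\delta$. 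This trimming step is precisely the idea missing from your argument; your greedy alternative sketched earlier also has the difficulties you yourself flag (existence of the extending cylinder and termination), so neither route as written closes the gap.
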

\begin{proof}
We may find a set $S$ such that its $m$ first iterates are disjoint and cover
a measure $\geq 1-\delta/2$, by Rokhlin's lemma. Let $S'$ be a finite union
of cylinders which approximates $S$ so well that $\mu(S' \Delta S) \leq
\rho$, for $\rho=\delta/(4m^2)$. Let $R = S' \setminus \bigcup_{0<i<m}
T^i(S')$. It is a finite union of cylinder sets, and the sets $T^i R$ for
$i<m$ are disjoint. We have
\begin{equation*}
  S' \cap T^i(S') \subseteq (S'\Delta S) \cup (S \cap T^i S) \cup (T^i S \Delta T^i S').
\end{equation*}
The middle set is empty, the other ones have measure at most $\rho$. Hence,
the measure of this set is at most $2\rho$. Finally, $\mu(R) \geq \mu(S') -
2(m-1) \rho \geq \mu(S) - 2m \rho$. Hence.
\begin{equation*}
  \mu\left(\bigcup_{0\leq i<m} T^i R\right) =
  m \mu(R) \geq m \mu(S) - 2m^2 \rho = \mu\left(\bigcup_{0\leq i<m} T^i S\right) - 2 m^2 \rho
  \geq 1 -\delta/2 - 2 m^2 \rho.
\end{equation*}
The choice of $\rho$ ensures that the last term is $1-\delta$, as claimed.
\end{proof}

\begin{proof}[Proof of Proposition~\ref{prop:counter_subadd}]
We will construct a sequence $n_i \to \infty$ and a sequence of functions
$f_i$ for $i\geq 1$ with the following properties:
\begin{enumerate}
\item Each $f_i$ is Hölder continuous (in fact, it will only depend on
    finitely many coordinates).
\item We have $f_i(x) \leq 0$ for all $x$, and $\int f_i = -2^{-i}$.
\item We have $\mu\{x \st S_{n_i} f_i(x) \leq -2 n_i\} \geq u_{n_i}$.
\end{enumerate}
Let also $f_0=1$ and $n_0=0$. Define then
\begin{equation*}
  a(n,x) = \sum_{i\st n_i \leq n} S_n f_i(x).
\end{equation*}
As the $(f_i)_{i\geq 1}$ are nonpositive, this is a subadditive cocycle.
Moreover, $\int a(n,x)/n = \sum_{n_i \leq n} \int f_i \to 0$. By Kingman's
theorem, it follows that $a(n,x)/n$ tends to $0$ almost surely. Moreover, if
$S_{n_i} f_i(x) \leq -2 n_i$, then by nonpositivity of all the $f_j$ except
for $j=0$,
\begin{equation*}
  a(n_i,x) \leq S_{n_i} f_0(x) + S_{n_i} f_i(x) \leq n_i -2n_i \leq -n_i.
\end{equation*}
Hence, the third point in the definition of $f_i$ ensures that $a(n_i,x) \leq
-n_i$ with probability at least $u_{n_i}$, showing that $a$ satisfies the
conclusion of the proposition.

Let us now construct $f_i$ and $n_i$ as above. First, choose $n=n_i$ such
that $u_{n_i} \leq 2^{-i-3}$. Then, let $K=2^{i+2} n_i$. We use a
corresponding Rokhlin tower: by Lemma~\ref{lem:rokhlin}, there exists a set
$R$ which is a finite union of cylinder sets such that $R,\dotsc, T^{K-1}R$
are disjoint, and their union covers a proportion $>1/2$ of the space. Then
$\mu(R) \in (1/(2K), 1/K]$. Define $f_i$ to be equal to $-c_i$ on
$\bigcup_{k<K/2^{i+1}} T^k R$ and $0$ elsewhere, where $c_i$ is chosen so
that $\int f_i = -2^{-i}$. As $\mu(\bigcup_{k<K/2^{i+1}} T^k R) = (K/2^{i+1})
\mu(R) \leq 2^{-i-1}$, it satisfies $c_i \geq 2$. For any $x \in
\bigcup_{k<K/2^{i+2}} T^k R$, one has $f_i(T^k x) = -c_i$ for $k <
K/2^{i+2}=n_i$, and therefore $S_{n_i} f_i(x) = -c_i n_i \leq -2 n_i$. The
probability of this event is $\mu\left(\bigcup_{k<K/2^{i+2}} T^k R\right) =
(K/2^{i+2}) \mu(R) \geq 2^{-i-3} \geq u_{n_i}$, as desired.
\end{proof}

We will now construct a continuous cocycle taking values in $SL(2,\R)$
without exponential large deviations for its top exponent. Note that a
generic continuous cocycle away from uniform hyperbolicity has only zero
Lyapunov exponents, by Bochi-Viana~\cite{bochi_viana}, so it has exponential
large deviations by Theorem~\ref{thm:large_deviations} (1). Hence, our
construction can not be done using Baire arguments.

\begin{thm}
\label{thm:counter_example} Let $u_n$ be any positive sequence tending to
$0$. Consider the full shift on two symbols with a fully supported invariant
ergodic measure $\mu$. Then there exists a continuous $SL(2,\R)$-valued
cocycle $M$ with a positive top Lyapunov $\lambda_+(M)$ such that, for
infinitely many values of $n$,
\begin{equation*}
  \mu\{x \st \log \norm{M^n(x)} \leq n \lambda_+(M)/2\} \geq u_n.
\end{equation*}
\end{thm}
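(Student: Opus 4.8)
The plan is to start from the uniformly hyperbolic constant cocycle $A=\left(\begin{smallmatrix} e^{\lambda_0} & 0 \\ 0 & e^{-\lambda_0}\end{smallmatrix}\right)$ with $\lambda_0>0$, and to spoil its large deviations on infinitely many scales by forcing a ``slow'' (elliptic) behaviour along suitable Rokhlin towers. The point is that a merely continuous cocycle need not be locally constant, so such a construction is not excluded by Theorem~\ref{thm:large_deviations}~(4). Fix a small $\epsilon_0>0$ and a rotation matrix $R\in SL(2,\R)$. For a sequence $n_1<n_2<\dotsb$ chosen below, I would use Lemma~\ref{lem:rokhlin} to construct, for each $j$, a clopen tower $\mathcal T_j$ of height $L_j=2n_j$ whose base has measure $\approx u_{n_j}/n_j$; then $\mu(\mathcal T_j)\approx 2u_{n_j}$, and the set $G_j$ of points remaining in $\mathcal T_j$ throughout their first $n_j$ iterates satisfies $\mu(G_j)\ge u_{n_j}$. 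Declare $M$ to equal $R$ on every $\mathcal T_j$ and $A$ off all the towers. For $x\in G_j$ one then has $M^{n_j}(x)=R^{n_j}$, so $\log\norm{M^{n_j}(x)}=O(1)$; hence, provided $\lambda_+(M)>0$, every $x\in G_j$ satisfies $\log\norm{M^{n_j}(x)}\le n_j\lambda_+(M)/2$ once $n_j$ is large, and the conclusion holds with $n=n_j$ for infinitely many $n$.

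It remains to arrange $\lambda_+(M)>0$. Choosing the $n_j$ to grow fast enough that $\sum_j u_{n_j}<\epsilon_0$, the union of all towers has measure $<2\epsilon_0$, so a $\mu$-typical orbit spends a fraction $\ge 1-2\epsilon_0$ of its time in the region $\{M=A\}$. There $A$ multiplies the $e_1$-component of a vector by $e^{\lambda_0}$ and contracts the angle to the $e_1$-axis at a fixed rate, so each excursion into a tower costs only $O(1)$ subsequent steps to re-align — using that the iterates of $R$ keep a vector at bounded aspect ratio (e.g.\ taking $R$ a rotation through a small angle, together if needed with an equidistribution argument for the excursion lengths). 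A Birkhoff bookkeeping of good runs against excursions then gives $\liminf_n \tfrac1n\log\norm{M^n(x)v}\ge(1-C\epsilon_0)\lambda_0$ for $\mu$-almost every $x$ and a fixed unit vector $v$ not in the stable line, hence $\lambda_+(M)\ge\lambda_0/2>0$.

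The delicate — and, I expect, the main — point is that $M$ must be genuinely continuous. An elliptic matrix stays a definite distance from the hyperbolic matrix $A$, so one cannot simply toggle $M$ between $R$ and $A$ on clopen sets that accumulate, in the complement of $\bigcup_j\mathcal T_j$, at some point of $\Sigma$: continuity would fail there. Hence the towers $\mathcal T_j$ have to be placed so that $\overline{\bigcup_j\mathcal T_j}$ is clopen, of the small measure above — for instance by an inductive choice making them nested, or by tagging them with increasingly long, self-non-overlapping marker words. This must be done simultaneously with the measure requirements on $\mathcal T_j$ and $G_j$, and the two demands — a trapped set of measure $\gtrsim u_{n_j}$, which forces fat towers of height tending to infinity, versus a topologically tame union — pull in opposite directions; reconciling them, by an appropriate joint choice of the towers and of the sequence $(n_j)$, is where the real work lies.
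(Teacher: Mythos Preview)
Your approach has a structural obstruction that cannot be repaired by clever tower placement. If $M$ takes only the two values $R$ and $A$ and is continuous on $\Sigma$, then the preimages $M^{-1}\{R\}$ and $M^{-1}\{A\}$ are both clopen, hence finite unions of cylinders, so $M$ is locally constant. But then Theorem~\ref{thm:large_deviations}~(4) applies and $M$ has exponential large deviations for its top exponent, contradicting precisely what you are trying to construct. Thus no arrangement of the towers $\mathcal T_j$ with a clopen union can succeed: winning the continuity game forces local constancy and defeats the purpose. The ``real work'' you identify at the end is in fact an impossibility for a two-valued cocycle. (Your Lyapunov argument is also shaky: a rotation excursion of length $k$ may send $e_1$ arbitrarily close to $e_2$ depending on $k\theta\bmod\pi$, so the re-alignment cost is not $O(1)$ uniformly; but this is secondary.)

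The paper circumvents the obstruction by building a \emph{sequence} of locally constant cocycles $M_i$ with $\norm{M_i-M_{i-1}}\leq\epsilon_i$ and $\sum\epsilon_i<\infty$, so that the uniform limit $M$ is continuous but \emph{not} locally constant. At each stage one starts from $M_{i-1}$ satisfying a property $P_\lambda$ (locally constant, $\lambda_+>\lambda$, locally constant Oseledets subspaces, and crucially $M_{i-1}(x_*)=\Id$ at the fixed point $x_*$) and produces $M_i$ still satisfying $P_\lambda$, together with a time $n_i$ and a set $A_i$ of measure $\geq u_{n_i}$ on which $\norm{M_i^{n_i}}<e^{\lambda n_i/2}$. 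The mechanism is not a fixed rotation but Bochi's exchange lemma: since $M_{i-1}(x_*)=\Id$ the cocycle is not uniformly hyperbolic, so along suitable orbit segments of bounded length $k$ an \emph{arbitrarily small} perturbation swaps $E^u$ and $E^s$. One swaps, lets the original cocycle run for $n$ steps (the formerly unstable direction now contracts), then swaps back; this produces the bad set $A_i$, while the Lyapunov exponent drops only by $O(n/m)$ inside a Rokhlin tower of height $m=Kn$, hence stays above $\lambda$ for $K$ large. Choosing $\epsilon_i$ small enough preserves the earlier bad sets $A_1,\dotsc,A_{i-1}$, and upper semi-continuity gives $\lambda_+(M)\geq\lambda$ in the limit.
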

If $u_n$ tends to zero slower than exponentially, for instance $u_n=1/n$,
then the cocycle $M$ does not have exponential large deviations.

\bigskip

Let $\Sigma$ be the full shift over two symbols $0$ and $1$, with a given
invariant ergodic measure $\mu$ of full support (what we really need is that
the support of $\mu$ contains a fixed point, or more generally a periodic
orbit, but $\mu$ is not supported on this orbit). In this section, we will
say that an object defined on $\Sigma$ is locally constant if it only depends
on $(x_n)_{\abs{n}\leq N}$ for some $N$. Let $x_* \in \Sigma$ be the point
with all coordinates equal to $1$. We say that a cocycle $M$ taking values in
$SL(2,\R)$ has property $P_\lambda$, for some $\lambda>0$, if it satisfies
the following properties:
\begin{enumerate}
\item The cocycle $M$ is locally constant.
\item Its largest Lyapunov exponent is $>\lambda$.
\item Its Oseledets subspaces, initially defined $\mu$-almost everywhere,
    are in fact locally constant (and therefore continuous).
\item One has $M(x_*)=\Id$.
\end{enumerate}
Define a cocycle $M_0$ by $M_0(x)=\left(\begin{smallmatrix} 2 & 0 \\
0 & 1/2
\end{smallmatrix}\right)$ if $x_0=0$, and $M_0(x) = \Id$ if $x_0=1$. Then its Oseledets subspaces are $\R
\oplus \{0\}$ and $\{0\} \oplus \R$, and the corresponding Lyapunov exponents
are nonzero. Hence, $M_0$ satisfies $P_\lambda$ for some $\lambda>0$.

The main lemma is the following:
\begin{lem}
\label{lem:not_exp_induc} Let $\lambda>0$ and $\epsilon>0$ and $n_0>0$. Let
$M$ be a cocycle with the property $P_\lambda$. Then there exist a time
$n>n_0$ and another cocycle $\tilde M$, again having the property
$P_\lambda$, with the following properties:
\begin{enumerate}
\item For all $x$, one has $\norm{\tilde M(x) - M(x)} \leq \epsilon$.
\item There exists a set $A$ with measure $\geq u_n$ on which $\norm{\tilde
    M^n(x)} < e^{\lambda n/2}$.
\end{enumerate}
\end{lem}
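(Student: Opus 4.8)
The goal of Lemma~\ref{lem:not_exp_induc} is to perturb a cocycle $M$ with property $P_\lambda$ by an arbitrarily small locally constant amount so that, along a long but prescribed-length interval, the orbit of a set of measure $\geq u_n$ gets its expansion destroyed. The natural mechanism is to exploit the fixed point $x_*$, where $M(x_*)=\Id$, together with a Rokhlin tower over a long block of $1$'s: on a cylinder $[1\ldots 1]$ of length $2N+1$, the cocycle $M$ is already close to $\Id$ in the sense that $M^k$ stays bounded (it equals the $k$-fold product of matrices each close to $\Id$, once $N$ is large, by continuity and $M(x_*)=\Id$). Wait — that is not automatic since $M$ is only assumed locally constant, not continuous at $x_*$ in a quantitative way; but local constancy means $M(x)=M(x_*)=\Id$ for every $x$ in the cylinder $[x_{-N},\dots,x_N]=[1,\dots,1]$ of the appropriate width, so on a long enough block of $1$'s the cocycle is literally the identity. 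That is the key structural fact I would use.

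\medskip

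\emph{Step 1: set up the tower.} Fix $N$ so that $M$ depends only on coordinates $\abs{j}\le N$. For a large integer $L$ (to be chosen), apply Lemma~\ref{lem:rokhlin} to get a base $R$, a finite union of cylinders, with $R,TR,\dots,T^{L-1}R$ disjoint and of total measure $>1/2$, so $\mu(R)\in(1/(2L),1/L]$. Refine $R$: intersect with the cylinder requiring coordinates $0,\dots,L-1$ (say) to equal $1$; the resulting set $R'$ still has measure comparable to $1/L$ up to a factor depending only on $\mu$ and $L$ — here I use that $\mu$ has full support (or merely that the periodic orbit of $x_*$ is in the support), so $\mu(R'\cap[\text{block of 1's}])>0$ and in fact $\ge c_\mu \mu(R)$ for a constant; one then rechooses $L$ and $n$ so that the resulting measure is $\ge u_n$, using $u_n\to 0$. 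The point $n$ will be of order $L$, and we are free to take $n>n_0$.

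\medskip

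\emph{Step 2: build the perturbation.} On the sub-tower $\bigcup_{0\le k<n} T^k R'$, where $M$ is the identity (since every point there has a long block of $1$'s around coordinate $0$ once $L\gg N$), replace $M$ by a locally constant cocycle $\tilde M$ which, on the first few floors, applies a small rotation, so that the product $\tilde M^n$ over a point $x\in R'$ is a rotation by a large cumulative angle — enough that the image of any fixed line has been rotated into a contracting cone. Concretely, on floor $k$ for $0\le k<n$, set $\tilde M = R_{\theta}$ a rotation by a tiny angle $\theta=\theta(\epsilon)\le\epsilon$; then $\tilde M^n(x)=R_{n\theta}$, and choosing $\theta\sim \pi/(2n)$ makes this a quarter turn, so $\norm{\tilde M^n(x)}=1<e^{\lambda n/2}$. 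Everywhere else set $\tilde M=M$. Since rotations are within $\epsilon$ of $\Id$ in norm, condition (1) holds. One must check $\tilde M$ is locally constant: the floors are unions of cylinders, so yes.

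\medskip

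\emph{Step 3: verify $\tilde M$ still has property $P_\lambda$ — this is the main obstacle.} The perturbation is supported on a set of small measure and is tiny, but we must ensure (i) the top Lyapunov exponent is still $>\lambda$, and (ii) the Oseledets subspaces are still locally constant and continuous, and (iii) $\tilde M(x_*)=\Id$ still holds. Point (iii) is immediate since $x_*\notin$ the support of the perturbation (we can always arrange the tower to avoid a neighborhood of $x_*$, or note $x_*$ is a single point). For (i) and (ii), the honest approach is \emph{not} to try to control the global Lyapunov exponent directly, but rather to make the perturbation even smaller — the real structure of the full argument (which this excerpt only previews) is presumably an infinite iteration producing a \emph{limit} cocycle, so at each stage one only needs $P_\lambda$ to persist under an $\epsilon$-perturbation with $\epsilon$ chosen small relative to data of $M$. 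The persistence of a locally constant Oseledets splitting with a spectral gap under small locally constant perturbations is a robustness statement: a locally constant cocycle with a dominated (locally constant, hence continuous) splitting is fiber-bunched-like, and domination is an open condition in the $C^0$ topology on locally constant cocycles of fixed ``width'' — but here the width $N$ grows. The correct bookkeeping is: choose the new $\theta$ small enough (depending on the \emph{already fixed} $M$, its gap, and its width) that the new splitting, obtained by the holonomy/graph-transform construction as in the proof of Lemma~\ref{lem:flag_split} or as in the $u$-state arguments, is still locally constant and the exponent still exceeds $\lambda$. I would phrase this as: since $M$ has a locally constant dominated splitting with top exponent $>\lambda$, there is $\epsilon_0(M)>0$ such that any locally constant $\tilde M$ with $\norm{\tilde M-M}_\infty<\epsilon_0$ also has a locally constant dominated splitting with top exponent $>\lambda$; then replace the given $\epsilon$ by $\min(\epsilon,\epsilon_0)$ at the outset. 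This reduces (i)–(ii) to a standard persistence-of-domination lemma, which is where the real work sits, and which I expect the authors to handle by a cone-field argument adapted to the totally disconnected setting.
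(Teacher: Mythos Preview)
Your approach has a genuine gap at the most basic quantitative level. In Step~1 you intersect the Rokhlin base with the cylinder $[1^L]$ so that $M$ is literally the identity on the tower. But then $\mu(R')\le \mu([1^L])$, which decays \emph{exponentially} in $L$ for any fully supported $\mu$ (e.g.\ $2^{-L}$ for the $(1/2,1/2)$ Bernoulli measure). Since $n\sim L$, your set $A$ has measure $\le e^{-cn}$, and you cannot achieve $\mu(A)\ge u_n$ for any sequence decaying slower than exponentially --- which is exactly the regime the theorem targets. Your claim that $\mu(R')$ is ``comparable to $1/L$'' is simply false. Note also that the rotation is doing nothing: since $M=\Id$ on the block of $1$'s, already $\norm{M^n(x)}=1$ there without any perturbation, so your $\tilde M$ does not enlarge the set where the norm is small.

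The paper's mechanism is quite different. It uses a Bochi-type exchange lemma (Lemma~\ref{lem:exchange}): at almost every point one can perturb $M$ by at most $\epsilon$ along a short orbit segment of bounded length $k$ so as to swap $E^u$ and $E^s$. This works at \emph{generic} points, not only near $x_*$. Inside a Rokhlin tower of height $m=Kn$ (with $K$ large), the paper performs this swap twice, at positions $a_p$ and $a_p+n$: the first swap sends $E^u$ to $E^s$, the cocycle then contracts for $n$ steps, and the second swap restores alignment with $E^u$. The set $A$ is a band of width $\sim \delta n/\lambda_+(M)$ in the middle of this interval, taken over all modifiable towers; its measure is bounded below by a constant independent of $n$, hence eventually $\ge u_n$. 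The double swap is what makes $P_\lambda$ persist: outside the modified locus the Oseledets directions of $\tilde M$ coincide with those of $M$ (so local constancy is immediate, not a perturbative statement), and the exponent drops only by $O(n/m)=O(1/K)$, controlled by choosing $K$ large.

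Your Step~3 persistence argument would also fail even if the measure issue were fixed: a perturbation of size $\theta\sim 1/n$ acting coherently over $n$ consecutive steps has cumulative effect of order one (your $R_{\pi/2}$), so no cone is preserved across the tower, and there is no $\epsilon_0(M)$ giving uniform persistence when the width of the locally constant perturbation is allowed to grow.
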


Let us admit the lemma for the time being. We construct inductively a
sequence of cocycles $M_i$, all with the property $P_\lambda$, starting with
$M_0$ as above. Suppose that we have already constructed times $n_1,\dotsc,
n_{i-1}$, sets $A_1,\dotsc, A_{i-1}$ with $\mu(A_j) \geq u_{n_j}$, and the
cocycle $M_{i-1}$ such that, for each $j<i$, $\norm{M_{i-1}^{n_j}(x)} <
e^{\lambda n_j/2}$ for all $x\in A_j$. We wish to construct a time
$n_i>n_{i-1}$, a set $A_i$ and a cocycle $M_i$ that satisfies the same
properties for all $j\leq i$. Note that, if $\epsilon=\epsilon_i$ is small
enough, then any cocycle $M_i$ with $\norm{M_i(x) - M_{i-1}(x)} \leq
\epsilon$ for all $x$ will satisfy the above properties for $j<i$, with the
same sets $A_j$. Hence, it suffices to apply Lemma~\ref{lem:not_exp_induc} to
$M = M_{i-1}$, with a sufficiently small $\epsilon$, to get $M_i = \tilde M$.

We can require $\epsilon_i \leq 1/2^i$. Then the sequence $M_i$ converges
uniformly, towards a limiting continuous cocycle $M(x)$. By semi-continuity
of the Lyapunov exponents, $\lambda_+(M) \geq \limsup \lambda_+(M_i) \geq
\lambda$. On the other hand, $\norm{M^{n_j}(x)} \leq e^{\lambda n_j/2}$ for
all $x\in A_j$, and this set has measure at least $u_{n_j}$ as claimed. This
concludes the proof of Theorem~\ref{thm:counter_example}. \qed

\medskip

It remains to prove Lemma~\ref{lem:not_exp_induc}. The main tool to modify
the cocycle is the following lemma, due to Bochi.

\begin{lem}
\label{lem:exchange} Assume that the cocycle $M$ satisfies $P_\lambda$. Let
$\epsilon>0$. Then, for almost every $x$, there exist $k(x) \in \N$ and
matrices $Q_0, \dotsc, Q_{k-1}$ such that $\norm{Q_i - M(T^i x)} \leq
\epsilon$ for all $i<k$, and the product $Q_{k-1} \dotsb Q_0$ sends $E^u(x)$
to $E^s(T^k x)$, and $E^s(x)$ to $E^u(T^k x)$ (where $E^s$ and $E^u$ are the
stable and unstable Oseledets directions of the cocycle $M$).
\end{lem}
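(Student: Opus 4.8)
The plan is to exploit that the cocycle $M$ has a positive Lyapunov exponent together with locally constant (hence continuous) Oseledets subbundles, so that along $\mu$-almost every orbit the unstable direction $E^u$ and the stable direction $E^s$ are uniformly transverse and the cocycle genuinely expands $E^u$ while contracting $E^s$. First I would fix a Pesin-type (Oseledets) regular point $x$: by the Oseledets theorem one has $\frac1n\log\norm{M^n(x)_{|E^u(x)}}\to\lambda_+>\lambda>0$ and $\frac1n\log\norm{M^n(x)_{|E^s(x)}}\to -\lambda_+<0$, and the angle $\angle(E^u(T^jx),E^s(T^jx))$ is bounded below along the orbit for $j$ in a density-one set (or, after passing to a compact Pesin block of positive measure that $x$ returns to, uniformly). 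The point of having a positive exponent is that for a large time $k_0$, the image $M^{k_0}(x)E^u(x)$ is a line in $E(T^{k_0}x)$ that is expanded relative to $M^{k_0}(x)E^s(x)$ by a factor $\sim e^{2\lambda_+ k_0}$; similarly the inverse cocycle expands $E^s$. This is exactly the regime in which Bochi's interchange trick operates.

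The core step is Bochi's elementary $SL(2,\R)$ lemma: if $A$ is a matrix in $SL(2,\R)$ with norm $\norm{A}=e^{t}$ large, then one can write $A$ as a product of $O(t)$ matrices, each within distance $\epsilon$ of the identity composed with the relevant factor of $M$, arranged so that the accumulated product rotates a prescribed direction (here $E^u(x)$) onto any other prescribed direction (here the preimage under the remaining factors of $E^s(T^kx)$). Concretely: let $A=M^{k_0}(x)$ for $k_0$ large, so $A$ maps the nearly-orthonormal frame $(E^u(x),E^s(x))$ to a frame $(M^{k_0}(x)E^u(x),M^{k_0}(x)E^s(x))$ that is extremely eccentric, the long axis being $M^{k_0}(x)E^u(x)$. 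Over a further stretch of time one has room, at each step, to insert a rotation of size at most $\epsilon$ (absorbed into $Q_i$ as a perturbation $\norm{Q_i-M(T^ix)}\le\epsilon$) because the cocycle's own expansion lets a tiny angular perturbation at an early time be amplified into a full $\pi/2$ rotation by a later time — this is the standard ``a small push at the thin end swings the long end'' mechanism. Iterating $O(1/\epsilon)$-many such pushes over a time window of length $O(k_0)$, one steers $M^{k}(x)E^u(x)$ (as modified) to land exactly on $E^s(T^kx)$, and by $SL(2,\R)$ (area preservation plus the transversality we arranged) the companion direction $E^s(x)$ is simultaneously sent to $E^u(T^kx)$. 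Set $k=k_0+(\text{length of the steering window})$ and let $Q_0,\dots,Q_{k-1}$ be the perturbed factors; $k(x)$ is finite because everything happens in finite time once $x$ is Oseledets-regular.

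The main obstacle is quantitative control of the steering: one must check that the perturbations stay within $\epsilon$ while still achieving an exact interchange of the two lines, not merely an approximate one. Bochi's lemma handles this by noting that the set of attainable images of a direction, under a product of $\epsilon$-perturbations of a sequence of matrices whose product has norm $e^{t}$ with $t$ large enough (depending on $\epsilon$), is \emph{all} of projective space — so hitting $E^s(T^kx)$ exactly is possible, not just up to an error. Thus the real work is: (i) pass to a positive-measure Pesin block on which transversality and the convergence in Oseledets's theorem are uniform, so that the required time $k_0=k_0(\epsilon)$ can be chosen from below; (ii) quote or reprove Bochi's $SL(2,\R)$ interchange estimate in the form just described; (iii) observe that because the construction only needs $x$ to be Oseledets-regular and to have the uniform transversality of the block (both almost-sure conditions), the conclusion holds for $\mu$-almost every $x$. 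Steps (i) and (iii) are routine given the Oseledets theorem; step (ii) is the heart, and it is precisely the point where the hypothesis $P_\lambda$ (local constancy, positive exponent, continuous Oseledets bundles, $M(x_*)=\Id$) is used — the last condition will matter in the subsequent construction rather than here, but local constancy guarantees that the perturbed cocycle $\tilde M$ one eventually builds from these $Q_i$ can again be taken locally constant.
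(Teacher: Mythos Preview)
There is a genuine gap. Your mechanism is inverted: hyperbolicity does not help you swap $E^u$ and $E^s$ by small perturbations, it \emph{prevents} it. If the cocycle were uniformly hyperbolic, there would be a continuous unstable cone field mapped strictly inside itself by $M$; for $\epsilon$ small enough each $Q_i$ with $\norm{Q_i-M(T^ix)}\le\epsilon$ still maps this cone field into itself, so $Q_{k-1}\dotsm Q_0$ sends $E^u(x)$ into the unstable cone at $T^kx$ and can never hit $E^s(T^kx)$. The ``small push at the thin end swings the long end'' heuristic actually goes the wrong way: a small angular perturbation of $E^u(x)$, pushed forward by a hyperbolic product, is \emph{contracted} back toward the unstable direction, not amplified away from it.

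The hypothesis you dismiss as irrelevant here, $M(x_*)=\Id$, is exactly what makes the lemma true. Because $M$ is locally constant, it equals $\Id$ on an open neighborhood of the fixed point $x_*$. A $\mu$-typical orbit visits this neighborhood for arbitrarily long consecutive stretches; along such a stretch of length $\ell$ one may take each $Q_i$ to be a rotation by an angle $O(\epsilon)$ (so $\norm{Q_i-\Id}\le\epsilon$), and after $\ell\gtrsim 1/\epsilon$ steps these rotations accumulate to any desired element of $SO(2)$, in particular one sending $E^u$ to $E^s$ and vice versa. This is the direct argument the paper sketches; alternatively one quotes Bochi's result (e.g.\ \cite[Proposition~9.10]{viana_lyapunov}), whose hypothesis is precisely that the cocycle is \emph{not} uniformly hyperbolic, a condition guaranteed by $M(x_*)=\Id$. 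Either way, the set of points where the swap can be achieved has positive measure, and since this set is backward-invariant (prepend $Q_{-1}=M(T^{-1}x)$), ergodicity upgrades positive measure to full measure.
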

\begin{proof}
The set $A$ of points that satisfy the conclusion of the lemma is backwards
invariant under the dynamics: if $Tx=y$ and the sequence of matrices
$Q_0,\dotsc,Q_{k-1}$ works for $y$, then the sequence of matrices
$\Id,Q_0,\dotsc, Q_{k-1}$ works for $x$, for $k(x) = k(y)+1$. By ergodicity,
it suffices to show that $A$ has positive measure. This follows
from~\cite[Proposition~9.10]{viana_lyapunov}, as the cocycle $M$ is not
uniformly hyperbolic thanks to the condition $M(x_*)=\Id$ in $P(4)$. (In our
case, there is a direct easy proof as the cocycle is the identity on a
neighborhood of the fixed point $x_*$, so it can be replaced by a small
rotation in suitable coordinates, on points whose orbit spends a long enough
time close to $x_*$).
\end{proof}

\begin{proof}[Proof of Lemma~\ref{lem:not_exp_induc}]
The idea is to apply Lemma~\ref{lem:exchange} at some points, modifying the
cocycle along a piece of orbit of length $k$, and then again the same lemma
$n$ steps later (for some $n$ much larger than $k$), to put again $E^s$ in
line with $E^s$, and $E^u$ in line with $E^u$. The norm of the new cocycle
will essentially not increase along these $n$ steps thanks to the
cancellations between the stable and unstable directions, yielding the
desired set $A$, while the Lyapunov exponent will essentially not be changed
if these $n$ steps are negligible compared to the whole dynamics. Making this
precise requires the use of the Rokhlin tower provided by
Lemma~\ref{lem:rokhlin}, and some care when choosing the constants.

The cocycle $M$ and its Oseledets subspaces are constant on cylinders of
length $2N+1$, for some $N$, by assumption. Replacing the original subshift
by a new subshift the symbols of which correspond to $2N+1$-cylinders of the
original subshift, we may assume without loss of generality that $N=0$, i.e.,
the cocycle $M(x)$ and the Oseledets subspaces $E^s(x)$ and $E^u(x)$ only
depend on the coordinate $x_0$ of $x$.

The minimal function $k(x)$ provided by Lemma~\ref{lem:exchange} is
measurable. Hence, it is bounded on a set of arbitrarily large measure. We
obtain an integer $k>0$, a set $X$ with $\mu(X) > 9/10$, and for each $x\in
X$ a sequence of matrices $Q_0(x),\dotsc, Q_{k-1}(x)$ with
\begin{equation}
  \label{normQi}
  \norm{Q_i(x) - M(T^i x)} \leq \epsilon
\end{equation}
whose product $Q_{k-1}(x) \dotsm Q_0(x)$ maps $E^s(x)$ to $E^u(T^k x)$ and
$E^u(x)$ to $E^s(T^k x)$.

Let $\lambda_+(M)>\lambda$ be the top Lyapunov exponent of $M$. Let
$\delta>0$ be small enough so that $14\delta < \lambda$. For $\mu$-almost
every $x$, there exists a constant $C(x)<\infty$ such that, for all $\ell\in
\Z$
\begin{equation*}
  C(x)^{-1} e^{-\delta \abs{\ell}} \leq \frac{\norm{M^\ell(x) v^u(x)}}{e^{\lambda_+(M) \ell}} \leq C(x) e^{\delta \abs{\ell}},
  \quad
  C(x)^{-1} e^{-\delta \abs{\ell}} \leq \frac{\norm{M^\ell(x) v^s(x)}}{e^{-\lambda_+(M) \ell}} \leq C(x) e^{\delta \abs{\ell}},
\end{equation*}
where $v^u(x)$ and $v^s(x)$ are unit vectors in $E^u(x)$ and $E^s(x)$.
Shrinking $X$ just a little bit, we can assume that $C(x)$ is bounded by a
constant $C_0$ on $X$, while retaining the estimate $\mu(X) > 9/10$.

As the Oseledets subspaces depend continuously on the point, by
$P_\lambda(3)$, the angle between $v^u(x)$ and $v^s(x)$ is bounded from
below. Hence, increasing $C_0$ if necessary, we can ensure that, for any
matrix $A$ and any $x$,
\begin{equation}
\label{eq:norm_bound_vu_vs}
  \norm{A} \leq C_0 \max(\norm{A v^u(x)}, \norm{A v^s(x)}).
\end{equation}
Increasing $C_0$ and shrinking $X$ if necessary, we can also assume that, for
any $x\in X$, the global modification matrix at $x$ given by $\tilde Q(x) =
M^k(x)^{-1}Q_{k-1}(x) \dotsm Q_0(x)$ (which exchanges $E^u(x)$ and $E^s(x)$)
expands all vectors by at most $C_0$, and contracts them by at most
$C_0^{-1}$.

\bigskip

Let $n\geq k$ be such that $C_0 \leq e^{\delta n}$. Let $m=Kn$, where $K\geq
6$ will be chosen later, independently of $n$. Applying
Lemma~\ref{lem:rokhlin}, we obtain a set $R$ which is a finite union of
cylinders, whose first $m$ iterates are disjoint and cover a measure $>
9/10$. Subdividing $R$ further if necessary, we may write it as a disjoint
union of cylinders $R_p$ of length $2r+1$, centered around $0$, for some
$r\geq m+k$. Let $O_p = \bigcup_{i<m} T^i R_p$, these sets are disjoint. We
will make the modifications of the cocycle separately on each $O_p$.

The point $x_*$ is in at most one $O_p$. If it belongs to $O_1$, say, then we
remove $R_1$ from $R$. Increasing $r$ if necessary, this removes an
arbitrarily small measure from $R$, so the new $R$ will still satisfy the
condition $\mu(\bigcup_{j<m} T^j R) > 9/10$. This means that modifying the
cocycle on the sets $O_p$ will not change its value on $x_*$, so that the
condition $M(x_*)=\Id$ in $P_\lambda(4)$ will still be satisfied by the
modified cocycle.

We say that a set $O_p$ is modifiable if there exists an index $a_p\in [0,
m-3n)$ such that $T^{a_p} R_p$ intersects $X \cap T^{-n} X$. If $O_p$ is not
modifiable, then the set $\tilde O_p = \bigcup_{a<m-3n} T^a R_p$ (whose
measure is at least $\mu(O_p)/2$ as $m-3n \geq m/2$) does not intersect $X
\cap T^{-n} X$. Hence, the union of these $\tilde O_p$ has measure at most
$1-\mu(X \cap T^{-n} X) \leq 2/10$, the union of the corresponding $O_p$ has
measure at most $4/10$, and the measure of the union of the modifiable $O_p$
is at least $9/10 - 4/10 = 1/2$.

Let $O_p$ be modifiable. Choose a point $x_p \in T^{a_p} R_p \cap X \cap
T^{-n} X$. On $O_p$, we define the cocycle $\tilde M$ to be equal to
$Q_i(x_p)$ on $T^{a_p+i} R_p$ for $0\leq i < k$, to $Q_i(T^n x_p)$ on
$T^{a_p+n+i} R_p$ for $0\leq i < k$, and to $M$ elsewhere. The cocycle $M$ is
constant on each set $T^i R_p$ (as $M(x)$ only depends on $x_0$, and $R_p$ is
a cylinder of length $2r+1$ with $r>m$). Hence, it follows
from~\eqref{normQi} that $\norm{\tilde M(x)-M(x)} \leq \epsilon$ everywhere.
Moreover, it is clear from the construction that $\tilde M$ is locally
constant.

Let us show that the Lyapunov exponent of $\tilde M$ is $>\lambda$. Start
from a point $x$ which is not in the modified locus $\bigcup_p \bigcup_{a_p
\leq i < a_p+n+k} T^i R_p$, we will estimate the expansion of $\tilde
M^\ell(x)v^u(x)$ when $\ell$ tends to $\infty$. Except when $T^\ell x$
belongs to the modified locus, the vector $\tilde M^\ell(x)v^u(x)$ is a
multiple of $v^u(T^\ell x)$, and undergoes the same expansion under $M$ or
$\tilde M$. The difference is the influence of the modified locus: when one
enters this locus, then one should apply the modification operator $\tilde
Q(x_p)$ which brings $v^u(x_p)$ to $v^s(x_p)$ (with an expansion at least
$C_0^{-1}$), then the original cocycle $M^n(x_p)$ but on the vector
$v^s(x_p)$, then the modification operator $\tilde Q(T^n x_p)$ that brings
back $v^s(T^n x_p)$ to $v^u(T^n x_p)$ (again with an expansion at least
$C_0^{-1}$). Then, one follows again the dynamics of the cocycle $M$. During
such a visit to the modified locus, the expansion under $\tilde M$ is at
least $C_0^{-1} \cdot C_0^{-1}e^{-\lambda_+(M) n -\delta n} \cdot C_0^{-1}$,
while the expansion under $M$ is at most $C_0 e^{\lambda_+(M) n + \delta n}$.
Hence, the expansion loss for $\tilde M$ with respect to $M$ is at most
$C_0^{-4} e^{-2\lambda_+(M) n -2\delta n} \geq e^{-2\lambda_+(M) n - 6 \delta
n}$. Moreover, such a loss happens at most once in every $m$ steps, since a
visit to $O_p$ has length $m$ by construction. We get
\begin{equation*}
  \lambda_+(\tilde M) \geq \lambda_+(M) - (2\lambda_+(M) + 6\delta) n/m.
\end{equation*}
By assumption, $\lambda_+(M)>\lambda$. If the ratio $K=m/n$ is large enough,
it follows that one also has $\lambda_+(\tilde M) > \lambda$.

The same argument shows that, towards the past, $v^u(x)$ is exponentially
contracted. Hence, $v^u(x)$ generates the Oseledets subspace $E^u(x)$ for
$\tilde M$. This shows that, away from the modified locus, the Oseledets
subspace is locally constant. Using its equivariance under $\tilde M$ and the
fact that $\tilde M$ is locally constant, we deduce that the Oseledets
subspace of $\tilde M$ is locally constant everywhere.

We have proved that $\tilde M$ satisfies $P_\lambda$. It remains to show the
existence of a set $A$ with measure $\geq u_n$ on which $\norm{\tilde M^n(x)}
< e^{n \lambda/2}$. We take for $A$ the union of the sets $T^{a_p + i} R_p$
over $i \in [n/2-\delta n/\lambda_+(M), n/2+\delta n/\lambda_+(M)]$ and $p$
such that $O_p$ is modifiable. In each modifiable set, $A$ takes a proportion
$(2\delta n/\lambda_+(M))/ m = 2\delta/(K \lambda_+(M))$. As the measure of
modifiable sets $O_p$ is at least $1/2$, we get $\mu(A) \geq \delta/(K
\lambda_+(M))$, a number which is independent of $n$. In particular, if $n$
is large enough, we get $\mu(A) \geq u_n$ as $u_n$ tends to $0$ with $n$.

Consider $x\in A$, let us show that $\norm{\tilde M^n(x)} < e^{n \lambda/2}$
to conclude the proof. Consider $p$ and $i=n/2+j$ with $\abs{j} \leq \delta
n/\lambda_+(M)$ such that $x \in T^{a_p + i} R_p$. First, we estimate the
norm of $\tilde M^n(x) v^s(x)$. This vector is obtained by iterating the
original cocycle $M$ during $n-i$ steps, then doing the modification $\tilde
Q(T^n x_p)$ that brings it to $v^u(T^n x_p)$, and then iterating the original
cocycle $M$ during $i$ steps. The first step results in an expansion by at
most $C_0 e^{-\lambda_+(M) (n-i) + \delta(n-i)}$ (as $T^n x_p \in X$), the
second one by an expansion at most $C_0$, and the third one by an expansion
at most $C_0 e^{\lambda_+(M) i + \delta i}$. In the end, we obtain
\begin{equation*}
  \norm{\tilde M^n(x) v^s(x)} \leq C_0^3 e^{\delta n} e^{-\lambda_+(M)(n-i) + \lambda_+(M) i}
  \leq C_0^3 e^{\delta n} e^{2\lambda_+(M) \abs{j}} \leq C_0^3 e^{3\delta n}.
\end{equation*}
In the same way, $v^u(x)$ is expanded by at most $C_0 e^{\lambda_+(M)(n-i) +
\delta(n-i)}$ during the first $n-i$ iterates, then by at most $C_0$ by the
modification $\tilde Q(T^n x_p)$ that brings it to $v^s(T^n x_p)$, and then
by at most $C_0 e^{-\lambda_+(M) i + \delta i}$ for the last $i$ iterates.
Hence,
\begin{equation*}
  \norm{\tilde M^n(x) v^u(x)} \leq C_0^3 e^{\delta n} e^{\lambda_+(M)(n-i) - \lambda_+(M) i}
  \leq C_0^3 e^{3\delta n}.
\end{equation*}
With~\eqref{eq:norm_bound_vu_vs}, this gives
\begin{equation*}
  \norm{\tilde M^n(x)} \leq C_0^4 e^{3\delta n} \leq e^{7\delta n} < e^{n\lambda/2},
\end{equation*}
thanks to the choice of $\delta$.
\end{proof}

\bibliography{biblio}
\bibliographystyle{amsalpha}

\end{document}